\title{ Strongly self-absorbing \cstar-dynamical systems, III }
\author{ Gábor Szabó }
\address{Fraser Noble Building, Institute of Mathematics, University of Aberdeen, \linebreak \text{}\hspace{2.8mm} Aberdeen AB24 3UE, Scotland, UK}
\email{gabor.szabo@abdn.ac.uk}
\thanks{\emph{Supported by:} SFB 878 \emph{Groups, Geometry and Actions} and EPSRC grant EP/N00874X/1}
\subjclass[2010]{46L55}
\numberwithin{equation}{section}
\begin{document}

\renewcommand\matrix[1]{\left(\begin{array}{*{10}{c}} #1 \end{array}\right)}  
\newcommand\set[1]{\left\{#1\right\}}  

\newcommand{\IA}[0]{\mathbb{A}} \newcommand{\IB}[0]{\mathbb{B}}
\newcommand{\IC}[0]{\mathbb{C}} \newcommand{\ID}[0]{\mathbb{D}}
\newcommand{\IE}[0]{\mathbb{E}} \newcommand{\IF}[0]{\mathbb{F}}
\newcommand{\IG}[0]{\mathbb{G}} \newcommand{\IH}[0]{\mathbb{H}}
\newcommand{\II}[0]{\mathbb{I}} \renewcommand{\IJ}[0]{\mathbb{J}}
\newcommand{\IK}[0]{\mathbb{K}} \newcommand{\IL}[0]{\mathbb{L}}
\newcommand{\IM}[0]{\mathbb{M}} \newcommand{\IN}[0]{\mathbb{N}}
\newcommand{\IO}[0]{\mathbb{O}} \newcommand{\IP}[0]{\mathbb{P}}
\newcommand{\IQ}[0]{\mathbb{Q}} \newcommand{\IR}[0]{\mathbb{R}}
\newcommand{\IS}[0]{\mathbb{S}} \newcommand{\IT}[0]{\mathbb{T}}
\newcommand{\IU}[0]{\mathbb{U}} \newcommand{\IV}[0]{\mathbb{V}}
\newcommand{\IW}[0]{\mathbb{W}} \newcommand{\IX}[0]{\mathbb{X}}
\newcommand{\IY}[0]{\mathbb{Y}} \newcommand{\IZ}[0]{\mathbb{Z}}

\newcommand{\CA}[0]{\mathcal{A}} \newcommand{\CB}[0]{\mathcal{B}}
\newcommand{\CC}[0]{\mathcal{C}} \newcommand{\CD}[0]{\mathcal{D}}
\newcommand{\CE}[0]{\mathcal{E}} \newcommand{\CF}[0]{\mathcal{F}}
\newcommand{\CG}[0]{\mathcal{G}} \newcommand{\CH}[0]{\mathcal{H}}
\newcommand{\CI}[0]{\mathcal{I}} \newcommand{\CJ}[0]{\mathcal{J}}
\newcommand{\CK}[0]{\mathcal{K}} \newcommand{\CL}[0]{\mathcal{L}}
\newcommand{\CM}[0]{\mathcal{M}} \newcommand{\CN}[0]{\mathcal{N}}
\newcommand{\CO}[0]{\mathcal{O}} \newcommand{\CP}[0]{\mathcal{P}}
\newcommand{\CQ}[0]{\mathcal{Q}} \newcommand{\CR}[0]{\mathcal{R}}
\newcommand{\CS}[0]{\mathcal{S}} \newcommand{\CT}[0]{\mathcal{T}}
\newcommand{\CU}[0]{\mathcal{U}} \newcommand{\CV}[0]{\mathcal{V}}
\newcommand{\CW}[0]{\mathcal{W}} \newcommand{\CX}[0]{\mathcal{X}}
\newcommand{\CY}[0]{\mathcal{Y}} \newcommand{\CZ}[0]{\mathcal{Z}}

\newcommand{\FA}[0]{\mathfrak{A}} \newcommand{\FB}[0]{\mathfrak{B}}
\newcommand{\FC}[0]{\mathfrak{C}} \newcommand{\FD}[0]{\mathfrak{D}}
\newcommand{\FE}[0]{\mathfrak{E}} \newcommand{\FF}[0]{\mathfrak{F}}
\newcommand{\FG}[0]{\mathfrak{G}} \newcommand{\FH}[0]{\mathfrak{H}}
\newcommand{\FI}[0]{\mathfrak{I}} \newcommand{\FJ}[0]{\mathfrak{J}}
\newcommand{\FK}[0]{\mathfrak{K}} \newcommand{\FL}[0]{\mathfrak{L}}
\newcommand{\FM}[0]{\mathfrak{M}} \newcommand{\FN}[0]{\mathfrak{N}}
\newcommand{\FO}[0]{\mathfrak{O}} \newcommand{\FP}[0]{\mathfrak{P}}
\newcommand{\FQ}[0]{\mathfrak{Q}} \newcommand{\FR}[0]{\mathfrak{R}}
\newcommand{\FS}[0]{\mathfrak{S}} \newcommand{\FT}[0]{\mathfrak{T}}
\newcommand{\FU}[0]{\mathfrak{U}} \newcommand{\FV}[0]{\mathfrak{V}}
\newcommand{\FW}[0]{\mathfrak{W}} \newcommand{\FX}[0]{\mathfrak{X}}
\newcommand{\FY}[0]{\mathfrak{Y}} \newcommand{\FZ}[0]{\mathfrak{Z}}

\renewcommand{\phi}[0]{\varphi}
\newcommand{\eps}[0]{\varepsilon}

\newcommand{\id}[0]{\operatorname{id}}		
\newcommand{\eins}[0]{\mathbf{1}}			
\newcommand{\ad}[0]{\operatorname{Ad}}
\newcommand{\ev}[0]{\operatorname{ev}}
\newcommand{\fin}[0]{{\subset\!\!\!\subset}}
\newcommand{\diam}[0]{\operatorname{diam}}
\newcommand{\Hom}[0]{\operatorname{Hom}}
\newcommand{\Aut}[0]{\operatorname{Aut}}
\newcommand{\dimrok}[0]{\dim_{\mathrm{Rok}}}
\newcommand{\dimrokc}[0]{\dim_{\mathrm{Rok}}^{\mathrm{c}}}
\newcommand{\dimrokeins}[0]{\dimrok^{\!+1}}
\newcommand*\onto{\ensuremath{\joinrel\relbar\joinrel\twoheadrightarrow}} 
\newcommand*\into{\ensuremath{\lhook\joinrel\relbar\joinrel\rightarrow}}  
\newcommand{\dst}[0]{\displaystyle}
\newcommand{\cstar}[0]{\ensuremath{\mathrm{C}^*}}
\newcommand{\dist}[0]{\operatorname{dist}}
\newcommand{\ue}[1]{{~\approx_{\mathrm{u},#1}}~}
\newcommand{\End}[0]{\operatorname{End}}
\newcommand{\ann}[0]{\operatorname{Ann}}
\newcommand{\strict}[0]{\stackrel{\text{\tiny str}}{\longrightarrow}}
\newcommand{\cc}[0]{\simeq_{\mathrm{cc}}}
\newcommand{\scc}[0]{\simeq_{\mathrm{scc}}}
\newcommand{\vscc}[0]{\simeq_{\mathrm{vscc}}}

\newcommand{\cf}[2]{cf.\ {\cite[#2]{#1}}}
\renewcommand{\see}[2]{see {\cite[#2]{#1}}}

\newtheorem{satz}{Satz}[section]		
\newtheorem{cor}[satz]{Corollary}
\newtheorem{lemma}[satz]{Lemma}
\newtheorem{prop}[satz]{Proposition}
\newtheorem{theorem}[satz]{Theorem}
\newtheorem*{theoreme}{Theorem}

\theoremstyle{definition}
\newtheorem{conjecture}[satz]{Conjecture}
\newtheorem*{conjecturee}{Conjecture}
\newtheorem{defi}[satz]{Definition}
\newtheorem*{defie}{Definition}
\newtheorem{defprop}[satz]{Definition \& Proposition}
\newtheorem{nota}[satz]{Notation}
\newtheorem*{notae}{Notation}
\newtheorem{rem}[satz]{Remark}
\newtheorem*{reme}{Remark}
\newtheorem{example}[satz]{Example}
\newtheorem{defnot}[satz]{Definition \& Notation}
\newtheorem{question}[satz]{Question}
\newtheorem*{questione}{Question}

\newenvironment{bew}{\begin{proof}[Proof]}{\end{proof}}


\begin{abstract} 
In this paper, we accomplish two objectives. Firstly, we extend and improve some results in the theory of (semi-)strongly self-absorbing \cstar-dynamical systems, which was introduced and studied in previous work.
In particular, this concerns the theory when restricted to the case where all the semi-strongly self-absorbing actions are assumed to be unitarily regular, which is a mild technical condition.
The central result in the first part is a strengthened version of the equivariant McDuff-type theorem, where equivariant tensorial absorption can be achieved with respect to so-called very strong cocycle conjugacy.

Secondly, we establish completely new results within the theory. 
This mainly concerns how equivariantly $\CZ$-stable absorption can be reduced to equivariantly UHF-stable absorption with respect to a given semi-strongly self-absorbing action.
Combining these abstract results with known uniqueness theorems due to Matui and Izumi-Matui, we obtain the following main result.
If $G$ is a torsion-free abelian group and $\CD$ is one of the known strongly self-absorbing \cstar-algebras, then strongly outer $G$-actions on $\CD$ are unique up to (very strong) cocycle conjugacy.
This is new even for $\IZ^3$-actions on the Jiang-Su algebra.
\end{abstract}

\maketitle

\tableofcontents


\section*{Introduction}

\noindent
This is a further continuation of my previous papers \cite{Szabo16ssa, Szabo16ssa2}, which introduced and studied (semi-)\-strongly self-absorbing \cstar-dynamical systems.
The motivation for studying such objects comes from the fundamental importance of strongly self-absorbing \cstar-algebras \cite{TomsWinter07} in the Elliott program.
For a more detailed description of this motivation and some history of the classification of group actions on \cstar-algebras and $\mathrm{W}^*$-algebras, the reader is referred to the introductions of the previous papers \cite{Szabo16ssa, Szabo16ssa2} and the references therein. 
A survey article \cite{Izumi10} by Izumi on these topics is especially noteworthy for anyone interested in the classification problem for group actions on operator algebras.

The first \cite{Szabo16ssa} of the previous papers provided an equivariant McDuff-type theorem characterizing equivariant tensorial absorption of (semi-)strongly self-absorb\-ing actions, generalizing classical results of R{\o}rdam \cite[Chapter 7, Section 2]{Rordam}, Toms-Winter \cite{TomsWinter07} and Kirchberg \cite{Kirchberg04}.
The second paper \cite{Szabo16ssa2} generalized some other classical results about strongly self-absorbing \cstar-algebras to the equivariant context, such as a stronger uniqueness theorem for certain equivariant $*$-homomorphisms by Dadarlat-Winter \cite{DadarlatWinter09} and permanence properties for the class of \cstar-algebras absorbing a fixed strongly self-absorbing \cstar-algebra.
In \cite{Szabo16ssa2}, the more sophisticated results could only be proved for semi-strongly self-absorbing actions that are unitarily regular. Simply put, this is an equivariant analog of the \cstar-algebraic property that the unitary commutator subgroup is in the connected component of the unit.
For a semi-strongly self-absorbing $G$-action $\gamma$, unitary regularity has been shown to be equivalent to the statement that the separable, $\gamma$-absorbing $G$-\cstar-dynamical systems are closed under equivariant extensions; see \cite[Section 4]{TomsWinter07} and \cite[Section 4]{Kirchberg04} for the corresponding classical results.
At present, it is open whether semi-strongly self-absorbing actions are automatically unitarily regular. However, $\gamma$ is unitarily regular if it is equivariantly $\CZ$-stable, which in turn is often automatic for discrete amenable acting groups, but not in general. 
In particular, the equivariant analog of the main result of \cite{Winter11} is not true in general; see \cite[5.4]{Szabo16ssa2}.
Apart from considering equivariant extensions, it is a theme throughout \cite{Szabo16ssa2} that for unitarily regular and semi-strongly self-absorbing actions, statements involving certain approximations by sequences can be smoothed out and strengthened to approximations by continuous paths. 

This is pursued further within the first half of this paper, where we improve the equivariant McDuff-type theorem from \cite{Szabo16ssa} in the unitarily regular case. 
Namely, we show that for a unitarily regular and semi-strongly self-absorbing action $\gamma: G\curvearrowright\CD$ and another action $\alpha: G\curvearrowright A$ on a separable \cstar-algebra, the McDuff condition implies that $\alpha$ and $\alpha\otimes\gamma$ are very strongly cocycle conjugate.
This means that they are conjugate modulo a cocycle that can be approximated by a continuous path of coboundaries starting at the unit; see Definition \ref{defi:conjugacies}\ref{item:vscc} and Theorem \ref{thm:strong-absorption}.
In the case of compact acting groups, one moreover gets that $\alpha$ and $\alpha\otimes\gamma$ are in fact conjugate; see Theorem \ref{thm:McDuff-compact}.

In the second half of the paper, some new results are obtained within the theory of semi-strongly self-absorbing actions. 
In section 4, we prove that for an action, the property of being semi-strongly self-absorbing can be detected by considering the restrictions with respect to an exhausting sequence of open subgroups of the acting group.
The same holds for the property of tensorially absorbing a given semi-strongly self-absorbing action; see Theorem \ref{thm:reduction-subgroups}.
This arises as a fairly straightforward consequence of the characterizations of these properties as approximate ones in previous work, combined with reindexation arguments.
In section 5, we prove that under the assumption of equivariant $\CZ$-stability, such properties can furthermore be detected after stabilizing with the trivial actions on UHF algebras of infinite type; see Theorem \ref{thm:reduction-Z}. 
In particular, this provides a way to reduce the classification of certain group actions on strongly self-absorbing \cstar-algebras to the classification of their UHF-stabilizations.
This is somewhat reminiscent of the main thrust of the methods developed by Winter in \cite{Winter14Lin}, which gave great impulse to the Elliott program, albeit the techniques developed in this paper have a much more narrow range of applicability in comparison. 

In section 6, these abstract results are then applied in combination with some known classification results to obtain the following uniqueness theorem for pointwise strongly outer actions on strongly self-absorbing \cstar-algebras. This constitutes the main result of the paper.

\begin{theoreme}
Let $\CD$ be a strongly self-absorbing \cstar-algebra satisfying the UCT. Let $G$ be a countable, torsion-free abelian group. Then any two pointwise strongly outer $G$-actions on $\CD$ are very strongly cocycle conjugate.
\end{theoreme}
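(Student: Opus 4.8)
The plan is to combine the two kinds of ``reduction'' results announced in the introduction with the deep classification inputs of Matui and Izumi--Matui, stratified according to the group $G$ and the algebra $\CD$. The first major step is to dispose of the finitely generated case. A countable torsion-free abelian group that is finitely generated is simply $\IZ^n$ for some $n \geq 0$, so the task is to show that any two pointwise strongly outer $\IZ^n$-actions on $\CD$ are very strongly cocycle conjugate. For $n = 0$ this is the classical uniqueness up to (very strong) cocycle conjugacy of the trivial action, which amounts to $\CD \cong \CD \otimes \CD$; for $n \geq 1$ one proceeds in cases according to which strongly self-absorbing \cstar-algebra $\CD$ is. When $\CD$ is a UHF algebra of infinite type or the universal UHF algebra, or a Kirchberg algebra in the UCT class (i.e. $\CO_2$ or $\CO_\infty$ or a tensor product of such with UHF), the relevant uniqueness theorems for strongly outer (respectively, arbitrary outer, in the purely infinite case) $\IZ^n$-actions are already in the literature via Matui, Izumi--Matui, and Nakamura, at least up to cocycle conjugacy; one then has to upgrade cocycle conjugacy to \emph{very strong} cocycle conjugacy. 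This upgrade is where unitary regularity enters: a pointwise strongly outer action on a strongly self-absorbing \cstar-algebra is equivariantly $\CD$-stable, hence equivariantly $\CZ$-stable, hence unitarily regular, and for unitarily regular semi-strongly self-absorbing actions the strengthened equivariant McDuff-type theorem (Theorem~\ref{thm:strong-absorption}) together with the machinery of \cite{Szabo16ssa2} lets one replace a given cocycle conjugacy by a very strong one.

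The remaining case of $\CD = \CZ$ itself is the genuinely new one and is handled by the second reduction. Here one invokes Theorem~\ref{thm:reduction-Z}: since a pointwise strongly outer action $\alpha\colon \IZ^n \curvearrowright \CZ$ is equivariantly $\CZ$-stable, to prove that two such actions $\alpha, \beta$ are very strongly cocycle conjugate it suffices to prove it after tensoring with the trivial action on a UHF algebra $M$ of infinite type. But $\alpha \otimes \id_M$ and $\beta \otimes \id_M$ are pointwise strongly outer $\IZ^n$-actions on $\CZ \otimes M \cong M$, so the already-established UHF case applies and gives very strong cocycle conjugacy of the stabilizations; Theorem~\ref{thm:reduction-Z} then pulls this back to $\alpha \vscc \beta$. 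One must check the hypotheses of Theorem~\ref{thm:reduction-Z} carefully --- in particular that pointwise strong outerness is preserved under tensoring with $\id_M$ and that it is inherited in the relevant sense --- but these are routine given the structure theory of strongly outer actions.

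Finally one passes from finitely generated groups to arbitrary countable torsion-free abelian $G$ by writing $G = \varinjlim G_k$ as an increasing union of finitely generated (hence free abelian) subgroups $G_0 \subseteq G_1 \subseteq \cdots$. This is precisely the situation addressed by the subgroup-reduction result, Theorem~\ref{thm:reduction-subgroups}, whose continuous acting group version specializes to discrete groups exhausted by an increasing sequence of (here, finite-index-free, but that is irrelevant) subgroups: a $G$-action is semi-strongly self-absorbing, and absorbs a given semi-strongly self-absorbing action, if and only if all its restrictions to the $G_k$ do. One then needs an analogous ``local-to-global'' statement at the level of very strong cocycle conjugacy of two given actions $\alpha, \beta\colon G \curvearrowright \CD$: knowing $\alpha|_{G_k} \vscc \beta|_{G_k}$ compatibly for all $k$, together with the fact that each $\alpha|_{G_k}$ absorbs the relevant model action (so that one is in the regime of the uniqueness theorems), one builds a very strong cocycle conjugacy for the whole of $G$ by a one-sided intertwining / Elliott-type approximate-intertwining argument in the sequence algebra, using reindexation to promote the sequence of approximately compatible unitaries into honest cocycle morphisms.

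The main obstacle is the last step: assembling the finitely-generated-subgroup uniqueness statements into a genuine very strong cocycle conjugacy for $G$. Cocycle conjugacy for the restrictions does not formally glue, because the cocycles and the conjugating automorphism for $G_{k+1}$ need not restrict to those chosen for $G_k$; one has to run an approximate intertwining in which at each stage one corrects by an inner perturbation supported in the commutant of larger and larger finite sets, and the continuous-path (``very strong'') refinement has to be maintained throughout. This is exactly the kind of argument that unitary regularity and the strengthened McDuff theorem of the first half of the paper are designed to make possible, so the technical heart is to verify that the hypotheses of those tools are available uniformly along the exhausting sequence.
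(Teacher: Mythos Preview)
Your proposal assembles the right ingredients but misses the structural shortcut that makes the argument go through cleanly. The difficulty you flag as the ``main obstacle'' --- gluing very strong cocycle conjugacies $\alpha|_{G_k}\vscc\beta|_{G_k}$ into one for the full group $G$ via some bespoke intertwining --- is a problem the paper never has to solve, because it never tries to glue cocycle conjugacies at all.

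The key point you are overlooking is that Theorem~\ref{thm:reduction-subgroups} already contains the local-to-global principle you need, but phrased in terms of \emph{absorption} rather than cocycle conjugacy. Once you know that, for each $n$, the restrictions $\gamma_1|_{G_n}$ and $\gamma_2|_{G_n}$ are semi-strongly self-absorbing and absorb each other (which is what the Matui and Izumi--Matui results give you after UHF-stabilization, and what Theorem~\ref{thm:reduction-Z} transfers to the $\CZ$-stable level), Theorem~\ref{thm:reduction-subgroups}\ref{reduction-subgroups:1} and \ref{reduction-subgroups:2} yield directly that the full $G$-actions $\gamma_1$ and $\gamma_2$ are semi-strongly self-absorbing and mutually $\gamma_i$-absorbing. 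Now apply the strengthened McDuff theorem (Theorem~\ref{thm:strong-absorption}) \emph{once}, for the group $G$ itself: unitary regularity (from equivariant $\CZ$-stability via Matui--Sato and Proposition~\ref{prop:Z-stable-commutators}) gives $\gamma_1\vscc\gamma_1\otimes\gamma_2\vscc\gamma_2$. No approximate intertwining along the $G_k$ is needed; the McDuff-type characterization of absorption is what passes cleanly through the subgroup filtration, and cocycle conjugacy is recovered only at the end, for all of $G$ at once. This is also the content of part \ref{reduction-subgroups:3} of Theorem~\ref{thm:reduction-subgroups}, which you did not invoke.

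Two smaller points. First, the paper does not split into the cases $\CD=\CZ$ versus $\CD\neq\CZ$; it uniformly tensors with the UHF algebras $M_p,M_q$, so that $\CD\otimes M_p$ and $\CD\otimes M_q$ are never the Jiang--Su algebra and the existing $\IZ^d$-uniqueness theorems apply directly. Second, your invocation of Theorem~\ref{thm:reduction-Z} with a single UHF algebra $M$ is not quite right: that theorem genuinely requires a \emph{pair} of coprime infinite-type UHF algebras, since its proof passes through the dimension-drop algebra $Z_{p,q}$ and the R{\o}rdam--Winter picture of $\CZ$.
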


We remark that, on a conceptual level, this type of result resembles Ocneanu's uniqueness theorem (see \cite{Ocneanu85}) for outer actions of amenable groups on the hyperfinite II${}_1$-factor. So in a sense, if one regards a strongly self-absorbing as a close \cstar-algebra analog of the hyperfinite II${}_1$-factor, one might call the above an Ocneanu-type uniqueness theorem.

Results of Matui \cite{Matui08, Matui11} and Izumi-Matui \cite{IzumiMatui10} have previously shown that the above is true for $\IZ^d$-actions on all the known strongly self-absorbing \cstar-algebras except for the Jiang-Su algebra $\CZ$. Sato \cite{Sato10} has shown such a uniqueness for $\IZ$-actions on $\CZ$, and Matui-Sato \cite{MatuiSato12_2} have extended this also to $\IZ^2$-actions on $\CZ$. 
We note that the uniqueness for actions of the Klein bottle group $\IZ\rtimes_{-1}\IZ$ is also known by further work of Matui-Sato \cite{MatuiSato14} on UHF algebras as well as $\CZ$; this was the first classification result for actions of non-abelian infinite groups on stably finite \cstar-algebras. 
Curiously, the known methods for showing a uniqueness result as above get increasingly difficult to implement with increasingly complicated acting groups, and even the uniqueness for pointwise strongly outer $\IZ^3$-actions on $\CZ$ has previously been open. 

Our main result essentially follows from three main ingredients:\ 
firstly, the known uniqueness for $\IZ^d$-actions on UHF-stable strongly self-absorbing \cstar-algebras mentioned above, which forms the basis of our argument;
secondly, the reduction theorems proved in sections 4 and 5 based on the abstract theory of semi-strongly self-absorbing actions;
and thirdly a result of Matui-Sato \cite[4.11]{MatuiSato14} asserting that pointwise strongly outer actions like above are automatically equivariantly $\CZ$-stable. See also a more recent paper \cite{Sato16} of Sato for a much more general $\CZ$-stability result.

It seems natural to expect that the known uniqueness results for $\IZ^d$-actions from \cite{Matui08, Matui11, IzumiMatui10} could be reproved abstractly within the common framework of semi-strongly self-absorbing actions, and without requiring the UCT assumption. 
It also seems plausible that this should in fact be possible for not necessarily abelian acting groups. For example, a uniqueness for actions like above seems feasible for (local) poly-$\IZ$ groups, considering an unpublished result of Izumi-Matui \cite{Izumi12OWR}. 
Considering moreover the $KK$-theoretically rigid situation for torsion-free amenable group actions on strongly self-absorbing \cstar-algebras showcased in \cite[4.12 and 4.17]{Szabo16kp}, I would go as far as to conjecture the following Ocneanu-type uniqueness, which shall be pursued in subsequent work:

\begin{conjecturee}
Let $\CD$ be a strongly self-absorbing \cstar-algebra. Let $G$ be a countable, torsion-free amenable group. Then any two pointwise strongly outer $G$-actions on $\CD$ are very strongly cocycle conjugate.
\end{conjecturee}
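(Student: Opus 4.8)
\emph{Plan.} The plan is to run, in the amenable generality, the same three-ingredient scheme that yields the main Theorem, and to locate precisely which ingredient is still missing. Let $\alpha,\beta: G\curvearrowright\CD$ be pointwise strongly outer. The first ingredient --- automatic equivariant $\CZ$-stability --- is already available for amenable acting groups: by the $\CZ$-stability results of Matui-Sato \cite[4.11]{MatuiSato14} and, in full amenable generality, of Sato \cite{Sato16}, both $\alpha$ and $\beta$ are equivariantly $\CZ$-stable, so that they satisfy the hypotheses of Theorem \ref{thm:reduction-Z}.

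The second ingredient is the reduction machinery from sections 4 and 5, which I would apply verbatim, as it is proved for arbitrary countable groups. Fixing a UHF algebra $M$ of infinite type, Theorem \ref{thm:reduction-Z} reduces very strong absorption of a given semi-strongly self-absorbing model action to the UHF-stabilizations $\alpha\otimes\id_M$ and $\beta\otimes\id_M$ on $\CD\otimes M$ (a UHF algebra when $\CD$ is stably finite, a Kirchberg algebra otherwise); and, writing $G=\bigcup_n G_n$ as an increasing union of finitely generated subgroups, Theorem \ref{thm:reduction-subgroups} reduces the same property to the restrictions $\alpha|_{G_n}$. Since both reductions are insensitive to the UCT, the conjecture is thereby reduced to a statement about the restricted, UHF-stabilized actions.

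What remains is the third ingredient, the only genuinely group-dependent one: the \emph{base case}, namely uniqueness of pointwise strongly outer actions of each $G_n$ on the UHF-stabilization $\CD\otimes M$. Granted such a uniqueness, a single strongly outer model action $\mu$ is semi-strongly self-absorbing, since $\mu\otimes\mu$ is again strongly outer and hence cocycle conjugate to $\mu$; Theorem \ref{thm:strong-absorption} then upgrades the resulting McDuff-type absorption to a very strong cocycle conjugacy, and a reindexation over the exhausting sequence $(G_n)_n$ splices the subgroup-level conjugacies into a single $\alpha\vscc\beta$. When $G_n\cong\IZ^{d}$ this base case is exactly the Matui and Izumi-Matui input \cite{Matui08,Matui11,IzumiMatui10} used in the abelian Theorem, and for poly-$\IZ$ subgroups one would appeal to the unpublished classification announced by Izumi-Matui \cite{Izumi12OWR}.

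The main obstacle is precisely this base case. The abstract theory developed here reduces the conjecture cleanly to an Ocneanu-type uniqueness for pointwise strongly outer actions of an arbitrary countable torsion-free \emph{amenable} group on a UHF algebra, but such a uniqueness is presently out of reach beyond the (local) poly-$\IZ$ world: a finitely generated torsion-free amenable group can be vastly more complicated than poly-$\IZ$, and no classification of its strongly outer UHF-actions is known --- all the more so in the absence of the UCT, where even the relevant model-action rigidity is unavailable. Supplying this uniqueness, presumably through a genuinely new equivariant cohomology-vanishing or Rokhlin-type argument valid in full amenable generality, is what would turn the conjecture into a theorem; the reduction steps above would then carry it the rest of the way.
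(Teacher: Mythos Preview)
The statement you are addressing is labelled a \emph{Conjecture} in the paper and is explicitly left open (``which shall be pursued in subsequent work''); the paper gives no proof of it. Your proposal is therefore not to be compared against a paper proof, and indeed you do not claim to prove the conjecture either --- you give a research outline and correctly isolate the missing ingredient. That diagnosis is accurate and matches the paper's own discussion: the reduction machinery of Sections~4--5 and the equivariant $\CZ$-stability input are in place, while the base case (uniqueness of pointwise strongly outer actions of the finitely generated subgroups on the UHF-stabilized $\CD$, and without the UCT) is genuinely open beyond the (local) poly-$\IZ$ setting.

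Two small corrections to your outline. First, the endgame is not a ``reindexation splicing'' of subgroup-level conjugacies into a global $\alpha\vscc\beta$. The argument in the paper's main theorem runs as follows: Theorems~\ref{thm:reduction-subgroups} and~\ref{thm:reduction-Z} are used to conclude, at the level of the full group $G$, that the $\CZ$-stabilized actions are semi-strongly self-absorbing and mutually absorbing; equivariant $\CZ$-stability then removes the $\CZ$ factor; and Theorem~\ref{thm:strong-absorption} is applied \emph{once}, at the level of $G$, to upgrade mutual absorption to very strong cocycle conjugacy. No gluing of subgroup-level conjugacies is needed or attempted. Second, your assertion that equivariant $\CZ$-stability is available ``in full amenable generality'' via \cite{Sato16} is stronger than what the paper asserts; the paper only says Sato's result is ``much more general'' than \cite[4.11]{MatuiSato14}, and one should check the precise hypotheses before invoking it for an arbitrary countable torsion-free amenable $G$ and an arbitrary (possibly non-UCT) strongly self-absorbing $\CD$.
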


Note that a uniqueness theorem like this usually fails already for finite groups; see \cite{Izumi04, Izumi04II} for range results of outer cyclic group actions on $\CO_2$. 
Since the computation of the equivariant $KK$-theory of an action via the Baum-Connes assembly map requires one to consider all the finite subgroups of the acting group, it is natural to expect that the above conjecture should fail outside the torsion-free case.
Concerning non-amenable groups, actions are known not to be rigid.
On the one hand, for a given non-amenable group $G$, an argument in a paper of Jones \cite{Jones83} implies that for any finite strongly self-absorbing \cstar-algebra $\CD$, the noncommutative Bernoulli-shift on $\bigotimes_G\CD$ does not absorb the trivial $G$-action on $\CD$; this yields two pointwise strongly outer $G$-actions on $\CD$ that are not cocycle conjugate.
On the other hand, a recent paper of Gardella-Lupini \cite{GardellaLupini17} shows that rigidity fails much more spectacularly upon assuming that $G$ has property (T).

\bigskip
{\bf Acknowledgement.} The work presented in this paper has benefited from a visit to the Department of Mathematics at the University of Kyoto in January 2016, and I would like to express my gratitude to Masaki Izumi for the hospitality and support.


\section{Preliminaries}

\begin{nota}
Unless specified otherwise, we will stick to the following notational conventions in this paper:
\begin{itemize}
\item The symbol $\alpha$ is used for a continuous action $\alpha: G\curvearrowright A$ of a locally compact group $G$ on a \cstar-algebra $A$. By slight abuse of notation, we will also write $\alpha: G\to\Aut(\CM(A))$ for the unique strictly continuous extension.
\item For an action $\alpha: G\curvearrowright A$, $A^\alpha$ denotes the fixed-point algebra of $A$.
\item If $(X,d)$ is some metric space with elements $a,b\in X$, then we write $a=_\eps b$ as a shortcut for $d(a,b)\leq\eps$.
\item By a unitary path in a \cstar-algebra $A$ we shall understand a norm-continuous map from $[0,1]$ to $\CU(\tilde{A})$.
\end{itemize}
\end{nota}

First we recall the notion of $1$-cocycles for actions on \cstar-algebras and their cocycle perturbations. Note that we are adding a new refinement in this paper, given by so-called asymptotic coboundaries, very strong exterior equivalence and very strong cocycle conjugacy.

\begin{defi}[see {\cite[3.2]{PackerRaeburn89} and \cite[1.3, 1.6]{Szabo16ssa}}]
\label{def:scc}
Let $\alpha: G\curvearrowright A$ be an action. Consider a strictly continuous map $w: G\to\CU(\CM(A))$.
\begin{enumerate}[label=(\roman*),leftmargin=*] 
\item $w$ is called an $\alpha$-1-cocycle (or just $\alpha$-cocycle), if one has $w_g\alpha_g(w_h)=w_{gh}$ for all $g,h\in G$.
In this case, the map $\alpha^w: G\to\Aut(A)$ given by $\alpha_g^w=\ad(w_g)\circ\alpha_g$ is again an action, and is called a cocycle perturbation of $\alpha$. Two $G$-actions on $A$ are called exterior equivalent if one of them is a cocycle perturbation of the other.
\item Assume that $w$ is an $\alpha$-1-cocycle. It is called an approximate coboundary, if there exists a sequence of unitaries $x_n\in\CU(\CM(A))$ such that $x_n\alpha_g(x_n^*) \stackrel{n\to\infty}{\longrightarrow} w_g$ in the strict topology for all $g\in G$ and uniformly on compact subsets of $G$. Two $G$-actions on $A$ are called strongly exterior equivalent, if one of them is a cocycle perturbation of the other via an approximate coboundary.
\item Assume that $w$ is an $\alpha$-1-cocycle. It is called an asymptotic coboundary, if there exists a strictly continuous path of unitaries $x: [0,\infty)\to\CU(\CM(A))$ with $x_0=\eins$ such that $x_t\alpha_g(x_t^*) \stackrel{t\to\infty}{\longrightarrow} w_g$ in the strict topology for all $g\in G$ and uniformly on compact subsets of $G$. Two $G$-actions on $A$ are called very strongly exterior equivalent, if one of them is a cocycle perturbation of the other via an asymptotic coboundary.
\end{enumerate}
\end{defi}

Analogously, let us consider the generalization of these equivalence relations to cocycle actions:

\begin{defi}[see {\cite[3.1]{PackerRaeburn89}} for (i)]
Let $(\alpha,u), (\beta,w): G\curvearrowright A$ be two cocycle actions. 
\begin{enumerate}[label=(\roman*),leftmargin=*] 
\item The pairs $(\alpha,u)$ and $(\beta,w)$ are called exterior equivalent, if there is a strictly continuous map $v: G\to\CU(\CM(A))$ satisfying $\beta_g=\ad(v_g)\circ\alpha_g$ and $w(s,t)=v_s\alpha_s(v_t)u(s,t)v_{st}^*$ for all $g,s,t\in G$. \label{item:ee}
\item The pairs $(\alpha,u)$ and $(\beta,w)$ are called strongly exterior equivalent, if there is a map $v: G\to\CU(\CM(A))$ as in \ref{item:ee} such that there is a sequence of unitaries $x_n\in\CU(\CM(A))$ with $x_n\alpha_g(x_n^*) \stackrel{n\to\infty}{\longrightarrow} v_g$ in the strict topology for all $g\in G$ and uniformly on compact subsets of $G$. \label{item:see}
\item The pairs $(\alpha,u)$ and $(\beta,w)$ are called very strongly exterior equivalent, if there is a map $v: G\to\CU(\CM(A))$ as in \ref{item:ee} such that there is a strictly continuous path of unitaries $x: [0,\infty)\to\CU(\CM(A))$ with $x_0=\eins$ and $x_t\alpha_g(x_t^*) \stackrel{t\to\infty}{\longrightarrow} v_g$ in the strict topology for all $g\in G$ and uniformly on compact subsets of $G$. \label{item:vsee}
\end{enumerate}
\end{defi}

We recall several notions that describe how one can identify two cocycle actions on \cstar-algebras. We note that condition \ref{item:vscc} below is a new definition and a natural strengthening of the notion of strong cocycle conjugacy originally introduced by Izumi-Matui in \cite{IzumiMatui10}. 

\begin{defi} \label{defi:conjugacies}
Two cocycle actions $(\alpha,u): G\curvearrowright A$ and $(\beta,w): G\curvearrowright B$ are called 
\begin{enumerate}[label=(\roman*),leftmargin=*]
\item conjugate, if there is an equivariant isomorphism $\phi: (A,\alpha,u)\to (B,\beta,w)$. In this case, we write $(\alpha,u)\cong (\beta,w)$. \label{item:c}
\item cocycle conjugate, if there is an isomorphism $\phi: A\to B$ such that $(\phi\circ\alpha\circ\phi^{-1}, \phi\circ u)$ is exterior equivalent to $(\beta,w)$. In this case, we write $(\alpha,u)\cc (\beta,w)$. \label{item:cc}
\item strongly cocycle conjugate, if there is an isomorphism $\phi: A\to B$ such that $(\phi\circ\alpha\circ\phi^{-1}, \phi\circ u)$ is strongly exterior equivalent to $(\beta,w)$. In this case, we write $(\alpha,u)\scc (\beta,w)$. \label{item:scc}
\item very strongly cocycle conjugate, if there is an isomorphism $\phi: A\to B$ such that $(\phi\circ\alpha\circ\phi^{-1}, \phi\circ u)$ is very strongly exterior equivalent to $(\beta,w)$. In this case, we write $(\alpha,u)\vscc (\beta,w)$. \label{item:vscc}
\end{enumerate}
If $\alpha$ and $\beta$ are genuine actions, then we omit the 2-cocycles from this notation.
\end{defi}

\begin{defi}[see {\cite[1.1]{Kirchberg04} and \cite[1.7, 1.9, 1.10]{Szabo16ssa}}] 
Let $A$ be a \cstar-algebra and $(\alpha,u): G\curvearrowright A$ a cocycle action of a locally compact group $G$. 
\begin{enumerate}[label={\textup{(\roman*)}},leftmargin=*]
\item The sequence algebra of $A$ is given as 
\[
A_\infty = \ell^\infty(\IN,A)/\set{ (x_n)_n \mid \lim_{n\to\infty}\| x_n\|=0}.
\]
There is a standard embedding of $A$ into $A_\infty$ by sending an element to its constant sequence. We shall always identify $A\subset A_\infty$ this way, unless specified otherwise.
\item Suppose $u=\eins$. Pointwise application of $\alpha$ on representing sequences defines a (not necessarily continuous) $G$-action $\alpha_\infty$ on $A_\infty$. Let
\[
A_{\infty,\alpha} = \set{ x\in A_\infty \mid [g\mapsto\alpha_{\infty,g}(x)]~\text{is continuous} }
\]
be the continuous part of $A_\infty$ with respect to $\alpha$.
\item For some \cstar-subalgebra $B\subset A_\infty$, the (corrected) relative central sequence algebra is defined as
\[
F(B,A_\infty) = (A_\infty\cap B')/\ann(B,A_\infty).
\]
\item Suppose that $B\subset A_\infty$ is an $\alpha_\infty$-invariant \cstar-subalgebra closed under multiplication with the unitaries $\set{u(g,h)}_{g,h\in G}$. Then the map $\alpha_\infty: G\to\Aut(A_\infty)$ given by componentwise application of $\alpha$ induces a (not necessarily continuous) $G$-action $\tilde{\alpha}_\infty$ on $F(B,A_\infty)$. Let
\[
F_\alpha(B,A_\infty) = \set{ y\in F(B,A_\infty) \mid [g\mapsto\tilde{\alpha}_{\infty,g}(y)]~\text{is continuous} }
\]
be the continuous part of $F(B,A_\infty)$ with respect to $\alpha$.
\item In case $B=A$, we write $F(A,A_\infty)=F_\infty(A)$ and $F_\alpha(A,A_\infty)=F_{\infty,\alpha}(A)$.
\end{enumerate}
\end{defi}

\begin{nota}[\see{Szabo16ssa2}{1.14}] \label{1-unitaries}
Let $G$ be a second-countable, locally compact group. Let $A$ be a \cstar-algebra and $\alpha: G\curvearrowright A$ an action. For $\eps>0$ and a compact set $K\subset G$, define the closed set
\[
A^\alpha_{\eps,K}=\set{a\in A \mid \|\alpha_g(a)-a\|\leq\eps~\text{for all}~g\in K} \subset A.
\]
If $A$ is unital, then also consider
\[
\CU(A^\alpha_{\eps,K}) = \CU(A)\cap A^\alpha_{\eps,K}
\]
and
\[
\CU_0(A^\alpha_{\eps,K}) = \set{ u(1)\in\CU(A^\alpha_{\eps,K}) \mid u: [0,1]\to\CU(A^\alpha_{\eps,K})~\text{continuous,}~u(0)=\eins}.
\]
\end{nota}

\begin{defi}[\cf{Szabo16ssa2}{2.1}] \label{Gue}
Let $G$ be a second-countable, locally compact group, $A$ and $B$ two \cstar-algebras and $\alpha: G\curvearrowright A$ and $\beta: G\curvearrowright B$ two actions.  Let $\phi_1, \phi_2: (A,\alpha)\to (B,\beta)$ be two  equivariant $*$-homomorphisms. 
\begin{enumerate}[label={\textup{(\roman*)}},leftmargin=*]
\item We say that $\phi_1$ and $\phi_2$ are approximately $G$-unitarily equivalent, if for every $\eps>0$, every finite set $F\fin A$ and compact set $K\subset G$, there is a unitary $v\in \CU\big(\tilde{B}^\beta_{\eps,K}\big)$ such that $\|\phi_2(x)-v\phi_1(x)v^*\|\leq\eps$ for all $x\in F$. We write $\phi_1\ue{G}\phi_2$. \label{Gue:1}
\item Assume that $A$ is separable. We say that $\phi_1$ and $\phi_2$ are strongly asymptotically $G$-unitarily equivalent, if for every $\eps_0>0$ and compact set $K_0\subset G$, there is a continuous path of unitaries $w: [1,\infty)\to\CU\big(\tilde{B}^\beta_{\eps_0,K_0}\big)$ satisfying $w(1)=\eins_B$,
\[
\phi_2(x)=\lim_{t\to\infty} w(t)\phi_1(x)w(t)^*\quad\text{for all}~x\in A,
\]
and
\[
\lim_{t\to\infty}~\max_{g\in K}~\|\beta_g(w(t))-w(t)\|=0 \quad\text{for every compact set}~K\subset G.
\]
\label{Gue:2}
\item Suppose that $G$ is compact. Then by averaging, we may assume in \ref{Gue:1} that $v$ is in the fixed point algebra $\tilde{B}^\beta$. We may also assume in \ref{Gue:2} that the path $w$ takes values in the fixed point algebra $\tilde{B}^\beta$.
\item If we disregard equivariance and consider $G=\set{1}$ in \ref{Gue:1} and \ref{Gue:2}, then we say that the maps $\phi_1$ and $\phi_2$ are approximately unitarily equivalent or strongly asymptotically unitarily equivalent, respectively.
\end{enumerate}
\end{defi}

\begin{defi}[\cf{Szabo16ssa}{3.1, 4.1}]
Let $\CD$ be a separable, unital \cstar-algebra and $G$ a second-countable, locally compact group. Let $\gamma: G\curvearrowright\CD$ be an action. We say that $\gamma$ is
\begin{enumerate}[label={(\roman*)},leftmargin=*]
\item strongly self-absorbing, if the equivariant first-factor embedding
\[
\id_\CD\otimes\eins_\CD: (\CD,\gamma)\to (\CD\otimes\CD,\gamma\otimes\gamma)
\]
is approximately $G$-unitarily equivalent to an isomorphism.
\item semi-strongly self-absorbing, if it is strongly cocycle conjugate to a strongly self-absorbing action.
\end{enumerate}
\end{defi}

Let us recall some results from \cite{Szabo16ssa}. 

\begin{theorem}[\see{Szabo16ssa}{4.6}] \label{thm:sssa}
Let $\CD$ be a separable, unital \cstar-algebra and $G$ a second-countable, locally compact group. Let $\gamma: G\curvearrowright\CD$ be an action. The following are equivalent:
\begin{enumerate}[label=\textup{(\roman*)},leftmargin=*] 
\item $\gamma$ is semi-strongly self-absorbing;
\item $\gamma$ has approximately $G$-inner half-flip and there exists a unital and equivariant $*$-homomorphism from $(\CD,\gamma)$ to $(\CD_{\infty,\gamma}\cap\CD',\gamma_\infty)$;
\item $\gamma$ has approximately $G$-inner half-flip and $\gamma\scc\gamma^{\otimes\infty}$.
\end{enumerate}
\end{theorem}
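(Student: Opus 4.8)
I would prove Theorem~\ref{thm:sssa} as the equivariant counterpart of the Toms--Winter characterization of strongly self-absorbing \cstar-algebras, establishing the chain $\mathrm{(i)}\Rightarrow\mathrm{(iii)}$, the equivalence $\mathrm{(iii)}\Leftrightarrow\mathrm{(ii)}$, and $\mathrm{(iii)}\Rightarrow\mathrm{(i)}$. The plan draws on a handful of elementary facts of the type one sets up alongside the definitions in \cite{Szabo16ssa}: that having approximately $G$-inner half-flip, and being strongly cocycle conjugate to a fixed action, are both preserved under strong cocycle conjugacy and under forming (infinite) tensor powers; that every equivariant automorphism of a strongly self-absorbing action is approximately $G$-inner (itself deduced from the definition via a reindexation argument, once one knows $\delta\cong\delta^{\otimes\infty}$); and an equivariant approximate-intertwining principle. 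The substance is not any single implication but doing everything equivariantly while keeping the produced conjugacies \emph{strong}, i.e.\ with perturbing cocycles that are approximate coboundaries.

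For $\mathrm{(i)}\Rightarrow\mathrm{(iii)}$ I would write $\gamma\scc\delta$ with $\delta:G\curvearrowright\CD$ strongly self-absorbing. Since $\id_\CD\otimes\eins_\CD:(\CD,\delta)\to(\CD\otimes\CD,\delta\otimes\delta)$ is approximately $G$-unitarily equivalent to an isomorphism, so is $\eins_\CD\otimes\id_\CD$ (conjugate the former by the equivariant tensor flip); and any two equivariant isomorphisms $(\CD,\delta)\to(\CD\otimes\CD,\delta\otimes\delta)$ are approximately $G$-unitarily equivalent, since the composite of one with the inverse of the other is an equivariant automorphism of $\delta$, hence approximately $G$-inner. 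Thus $\delta$, and therefore $\gamma$, has approximately $G$-inner half-flip. Next, the equivariant approximate-intertwining principle applied to the data witnessing that $\delta$ is strongly self-absorbing yields an equivariant isomorphism $(\CD,\delta)\cong(\CD^{\otimes\infty},\delta^{\otimes\infty})$, so $\gamma\scc\delta\cong\delta^{\otimes\infty}\scc\gamma^{\otimes\infty}$, whence $\gamma\scc\gamma^{\otimes\infty}$.

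For $\mathrm{(iii)}\Rightarrow\mathrm{(ii)}$ only the unital equivariant $*$-homomorphism into $(\CD_{\infty,\gamma}\cap\CD',\gamma_\infty)$ needs to be produced. In $(\CD^{\otimes\infty},\gamma^{\otimes\infty})$, sending $d\in\CD$ to the element of $(\CD^{\otimes\infty})_\infty$ represented by the sequence whose $n$-th term has $d$ in the $n$-th tensor leg and units elsewhere defines a unital equivariant $*$-homomorphism $(\CD,\gamma)\to\big((\CD^{\otimes\infty})_{\infty,\gamma^{\otimes\infty}}\cap(\CD^{\otimes\infty})',(\gamma^{\otimes\infty})_\infty\big)$ straight from the tensor structure. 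Conjugating by the isomorphism witnessing $\gamma\scc\gamma^{\otimes\infty}$ yields such a map relative to an action on $\CD$ that is conjugate to $\gamma^{\otimes\infty}$ and strongly exterior equivalent to $\gamma$, and a reindexation argument then transports it across that strong exterior equivalence; this is where one genuinely uses that the connecting cocycle is an approximate coboundary (approximately $G$-inner half-flip alone does not produce such a $*$-homomorphism, as the case $\CD=M_2$ shows). For the converse $\mathrm{(ii)}\Rightarrow\mathrm{(iii)}$: since the image of the given map commutes with the canonical copy of $\CD$ in $\CD_\infty$, combining it with the inclusion $\CD\subset\CD_{\infty,\gamma}$ assembles a unital equivariant $*$-homomorphism $(\CD\otimes\CD,\gamma\otimes\gamma)\to(\CD_{\infty,\gamma},\gamma_\infty)$ that restricts to the standard inclusion on the first leg. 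Feeding this into a one-sided equivariant approximate intertwining produces an isomorphism exhibiting $\gamma$ as a cocycle perturbation of $\gamma\otimes\gamma$, and the approximately $G$-inner half-flip forces the perturbing cocycle to be an approximate coboundary, so $\gamma\scc\gamma\otimes\gamma$; iterating and telescoping the approximate coboundaries gives $\gamma\scc\gamma^{\otimes\infty}$.

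Finally, for $\mathrm{(iii)}\Rightarrow\mathrm{(i)}$ it suffices to show $\gamma^{\otimes\infty}$ is strongly self-absorbing, since $\gamma$ is then strongly cocycle conjugate to it and hence semi-strongly self-absorbing by definition. I would fix the interleaving isomorphism $(\CD^{\otimes\infty})\otimes(\CD^{\otimes\infty})\cong\CD^{\otimes\infty}$, equivariant from $(\gamma^{\otimes\infty})\otimes(\gamma^{\otimes\infty})$ to $\gamma^{\otimes\infty}$; under it the first-factor embedding of $\CD^{\otimes\infty}$ becomes the embedding onto the odd tensor legs, and the approximately $G$-inner half-flip of $\gamma$ lets one slide any finite block of legs into place via unitaries arbitrarily close to being $G$-fixed, so this embedding is approximately $G$-unitarily equivalent to the identity, and $\gamma^{\otimes\infty}$ is strongly self-absorbing. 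The main obstacle is the cumulative equivariant bookkeeping rather than any one implication: all approximate unitary equivalences must be implemented by unitaries in the sets $\CU\big(\tilde{B}^\beta_{\eps,K}\big)$, one must move repeatedly between $\CD_\infty$, its continuous part $\CD_{\infty,\gamma}$, and relative commutants, and---most delicately, in the reindexation transport of $\mathrm{(iii)}\Rightarrow\mathrm{(ii)}$ and the intertwining of $\mathrm{(ii)}\Rightarrow\mathrm{(iii)}$---one must ensure the conjugacies obtained are strong. This last point is exactly the role played by the approximately $G$-inner half-flip hypothesis, which is why it appears explicitly in (ii) and (iii).
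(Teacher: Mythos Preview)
This theorem is not proved in the present paper: it is merely quoted from \cite[4.6]{Szabo16ssa} (note the attribution ``see \cite[4.6]{Szabo16ssa}'' in the theorem heading), so there is no proof here to compare your proposal against. Your outline is the standard route to such a characterization and is internally coherent---the cycle $\mathrm{(i)}\Rightarrow\mathrm{(iii)}\Leftrightarrow\mathrm{(ii)}$ together with $\mathrm{(iii)}\Rightarrow\mathrm{(i)}$ via strong self-absorption of $\gamma^{\otimes\infty}$ is exactly how one expects the argument in \cite{Szabo16ssa} to run, and your identification of the delicate points (transporting the central-sequence embedding across a strong exterior equivalence via reindexation, and ensuring all conjugacies produced are strong) is accurate. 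A genuine comparison would require consulting \cite{Szabo16ssa} itself; within the paper you were given, there is simply nothing to check your argument against.
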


\begin{theorem}[see {\cite[3.7, 4.7]{Szabo16ssa}}]
\label{equi-McDuff}
Let $G$ be a second-countable, locally compact group. Let $A$ be a separable \cstar-algebra and $(\alpha,u): G\curvearrowright A$ a cocycle action. Let $\CD$ be a separable, unital \cstar-algebra and $\gamma: G\curvearrowright\CD$ a semi-strongly self-absorbing action. The following are equivalent:
\begin{enumerate}[label=\textup{(\roman*)},leftmargin=*] 
\item $(\alpha,u)\scc(\alpha\otimes\gamma,u\otimes\eins)$. \label{equi-McDuff1}
\item $(\alpha,u)\cc(\alpha\otimes\gamma,u\otimes\eins)$.  \label{equi-McDuff2}
\item There exists a unital and equivariant $*$-homomorphism from $(\CD,\gamma)$ to $\big( F_{\infty,\alpha}(A), \tilde{\alpha}_\infty \big)$. \label{equi-McDuff3}
\end{enumerate}
\end{theorem}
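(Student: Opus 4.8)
The plan is to prove the cyclic chain \ref{equi-McDuff3}$\Rightarrow$\ref{equi-McDuff1}$\Rightarrow$\ref{equi-McDuff2}$\Rightarrow$\ref{equi-McDuff3}. Of these, \ref{equi-McDuff1}$\Rightarrow$\ref{equi-McDuff2} is immediate, since strong exterior equivalence of cocycle actions is in particular exterior equivalence; the real content lies in the other two implications, with \ref{equi-McDuff3}$\Rightarrow$\ref{equi-McDuff1} being the substantial one.

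For \ref{equi-McDuff3}$\Rightarrow$\ref{equi-McDuff1} I would argue by an equivariant one-sided Elliott intertwining, prepared by a reindexation. Fix a unital equivariant $*$-homomorphism $\phi\colon(\CD,\gamma)\to\big(F_{\infty,\alpha}(A),\tilde\alpha_\infty\big)$. Using that $\gamma$ has approximately $G$-inner half-flip and that $\gamma\scc\gamma^{\otimes\infty}$ — both available from Theorem~\ref{thm:sssa} since $\gamma$ is semi-strongly self-absorbing — a reindexation argument upgrades $\phi$ to the following data: for every $\eps>0$, finite $F\fin\CD$ and compact $K\subset G$ there is a unital completely positive map $\psi\colon\CD\to A$ that is $\eps$-multiplicative on $F$, whose image $\eps$-commutes on $F$ with a prescribed finite subset of $A$ and with the $2$-cocycle unitaries $u(g,h)$ for $g,h\in K$, and that is $\eps$-equivariant in the sense $\|\alpha_g(\psi(d))-\psi(\gamma_g(d))\|\leq\eps$ for $d\in F$, $g\in K$; crucially — and this is exactly what the half-flip together with $\gamma\scc\gamma^{\otimes\infty}$ provides — two such maps coming from disjoint ``stages'' can be made to $\eps$-commute with each other via a unitary of $A$ that itself lies in a slightly enlarged $\CU(\tilde A^\alpha_{\eps,K})$. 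I then consider the first-factor embedding $\iota\colon(A,\alpha,u)\to(A\otimes\CD,\alpha\otimes\gamma,u\otimes\eins)$, $a\mapsto a\otimes\eins$, and, using the approximately central copies of $\CD$ in $A$, build a $*$-homomorphism $A\otimes\CD\to A_\infty$ that is equivariant up to the cocycle $u\otimes\eins$, restricts to the canonical inclusion on $A\otimes\eins$, and sends $\eins\otimes\CD$ into an approximately central copy of $\CD$; the composite $A\to A\otimes\CD\to A_\infty$ then agrees with the standard inclusion $A\hookrightarrow A_\infty$ modulo conjugation by unitaries in $\CU(\tilde A^\alpha_{\eps,K})$. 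Feeding $\iota$ and this backwards map into the one-sided intertwining machinery of \cite{Szabo16ssa} — in the version that controls approximate unitary equivalence through the $(\eps,K)$-data — yields an isomorphism $A\to A\otimes\CD$ together with a family $v\colon G\to\CU(\CM(A\otimes\CD))$ read off from the conjugating unitaries, implementing a cocycle conjugacy $(\alpha,u)\cc(\alpha\otimes\gamma,u\otimes\eins)$; since every conjugating unitary was chosen in the sets $\CU(\tilde A^\alpha_{\eps,K})$, a suitable diagonal sequence exhibits $v$ as an approximate coboundary, so the cocycle conjugacy is in fact strong, which is \ref{equi-McDuff1}.

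For \ref{equi-McDuff2}$\Rightarrow$\ref{equi-McDuff3}: assume $(\alpha,u)\cc(\alpha\otimes\gamma,u\otimes\eins)$, realized by an isomorphism $\rho\colon A\to A\otimes\CD$ and an exterior-equivalence cocycle. Tensoring this identification repeatedly with $(\CD,\gamma)$ and composing produces, for each $n$, an identification of $(\alpha,u)$ with $(\alpha\otimes\gamma^{\otimes n},u\otimes\eins)$ up to exterior equivalence. A McDuff-type reindexation over these stages then extracts, inside $A_\infty$, a copy of $\CD$ that commutes with $A$ and is arranged so that the accumulated exterior-equivalence cocycles do not obstruct equivariance in the limit; using Theorem~\ref{thm:sssa} once more to realize the standard central sequence $(\CD,\gamma)\to(\CD_{\infty,\gamma}\cap\CD',\gamma_\infty)$ inside the last tensor factor, this copy can be taken unital and $\gamma$-equivariant for $\tilde\alpha_\infty$. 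This produces the desired unital equivariant $*$-homomorphism $(\CD,\gamma)\to\big(F_{\infty,\alpha}(A),\tilde\alpha_\infty\big)$.

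The main obstacle is the intertwining step in \ref{equi-McDuff3}$\Rightarrow$\ref{equi-McDuff1}: one must run the approximate intertwining while simultaneously (a) carrying along the $2$-cocycles $u$ and $u\otimes\eins$ so that the limiting map is a genuine equivariant isomorphism of cocycle actions up to exterior equivalence, (b) arranging that all conjugating unitaries are approximately $\alpha$-fixed, uniformly over the exhausting data $(\eps,K)$, so that the resulting cocycle is an \emph{approximate} coboundary and not merely a cocycle, and (c) carrying out everything in $\CM(A)$ to accommodate non-unital $A$. This bookkeeping is the technical heart of the equivariant McDuff-type theorem, and it is exactly here that semi-strong self-absorption of $\gamma$ is used in full: the approximately $G$-inner half-flip supplies the almost-$\alpha$-fixed unitaries that witness commutativity of successive copies of $\CD$, and $\gamma\scc\gamma^{\otimes\infty}$ makes the intertwining converge to an isomorphism rather than to a mere embedding.
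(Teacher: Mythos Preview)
This theorem carries no proof in the present paper: it is recalled verbatim from \cite[3.7, 4.7]{Szabo16ssa} as background for later use, so there is nothing here to compare your proposal against.

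For what it is worth, your outline for \ref{equi-McDuff3}$\Rightarrow$\ref{equi-McDuff1} is in the spirit of the original argument --- the technical vehicle is precisely the (sequence version of the) one-sided intertwining lemma that reappears in this paper as Lemma~\ref{one-side}, fed by the approximately $G$-inner half-flip. Your \ref{equi-McDuff2}$\Rightarrow$\ref{equi-McDuff3}, however, is more elaborate than necessary: the dynamical system $\big(F_{\infty,\alpha}(A),\tilde\alpha_\infty\big)$ is invariant under cocycle conjugacy of $(\alpha,u)$ (this is \cite[1.10]{Szabo16ssa}, invoked in the proof of Lemma~\ref{lemma:F(A)-subgroup} here), so one may simply assume $(\alpha,u)=(\alpha\otimes\gamma,u\otimes\eins)$ and then compose the second-factor inclusion $\CD_{\infty,\gamma}\cap\CD'\hookrightarrow F_{\infty,\alpha\otimes\gamma}(A\otimes\CD)$ with the map supplied by Theorem~\ref{thm:sssa}. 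No iterated tensoring or reindexation is needed for that direction.
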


\begin{reme}
An action $\alpha$ satisfying condition \ref{equi-McDuff}\ref{equi-McDuff1} is called $\gamma$-absorbing. 
\end{reme}

We shall also recall the notion of a unitarily regular action.

\begin{defi}[\see{Szabo16ssa2}{1.17}] \label{unireg}
Let $G$ be a second-countable, locally compact group. Let $A$ be a unital \cstar-algebra and $\alpha: G\curvearrowright A$ an action. We say that $\alpha$ is unitarily regular, if for every compact set $K\subset G$ and $\eps>0$, there exists $\delta>0$ such that $uvu^*v^*\in\CU_0(A^\alpha_{\eps,K})$ for every $u,v\in\CU(A^\alpha_{\delta,K})$.
\end{defi}

\begin{theorem}[\see{Szabo16ssa}{2.2}] \label{thm:saGi-hf}
If a semi-strongly self-absorbing action $\gamma: G\curvearrowright\CD$ is unitarily regular, then $\gamma$ has strongly asymptotically $G$-inner half-flip. In particular, the half-flip can be approximately implemented by unitaries in $\CU_0\big( (\CD\otimes\CD)^{\gamma\otimes\gamma}_{\eps,K} \big)$ for arbitrarily small $\eps>0$ and large compact sets $K\subset G$.
\end{theorem}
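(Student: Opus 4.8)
The plan is to upgrade the approximate $G$-innerness of the half-flip, which is already at our disposal, to strong asymptotic $G$-innerness, by a one-sided reindexation argument in which unitary regularity supplies the crucial extra flexibility. By Theorem \ref{thm:sssa}, a semi-strongly self-absorbing $\gamma$ has approximately $G$-inner half-flip: the two equivariant embeddings $\iota_1 = \id_\CD\otimes\eins$ and $\iota_2 = \eins\otimes\id_\CD$ from $(\CD,\gamma)$ into $(\CD\otimes\CD,\gamma\otimes\gamma)$ are approximately $G$-unitarily equivalent, and condition (ii) of that theorem moreover supplies a unital equivariant $*$-homomorphism from $(\CD,\gamma)$ into $(\CD_{\infty,\gamma}\cap\CD',\gamma_\infty)$. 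What must be shown is that $\iota_1$ and $\iota_2$ are in fact strongly asymptotically $G$-unitarily equivalent in the sense of Definition \ref{Gue}\ref{Gue:2}; the ``in particular'' statement then follows at once, since any value of the implementing path from Definition \ref{Gue}\ref{Gue:2} lies in $\CU_0\big((\CD\otimes\CD)^{\gamma\otimes\gamma}_{\eps_0,K_0}\big)$ (the path starts at $\eins$ and remains inside $(\CD\otimes\CD)^{\gamma\otimes\gamma}_{\eps_0,K_0}$), and for large values of the parameter it approximately implements the half-flip. We also record that $\gamma\otimes\gamma\scc\gamma^{\otimes\infty}\otimes\gamma^{\otimes\infty}\cong\gamma^{\otimes\infty}\scc\gamma$, so $\gamma\otimes\gamma$ is again semi-strongly self-absorbing and, unitary regularity being invariant under strong cocycle conjugacy, unitarily regular as well.

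The implementing path $w$ is built by recursion. Fix increasing sequences $F_1\subseteq F_2\subseteq\cdots\fin\CD$ with dense union, $K_1\subseteq K_2\subseteq\cdots$ of compact subsets of $G$ with union $G$, and $\eps_n\searrow 0$. Approximate $G$-unitary equivalence yields unitaries $v_n\in\CU\big((\CD\otimes\CD)^{\gamma\otimes\gamma}_{\eps_n,K_n}\big)$ with $\|v_n\iota_1(x)v_n^*-\iota_2(x)\|\le\eps_n$ for all $x\in F_n$; the task is to interpolate between consecutive $v_n$ by continuous paths that stay $O(\eps_n)$-fixed under $\gamma\otimes\gamma$ on $K_n$ and conjugate $\iota_1$ to $\iota_2$ to within $O(\eps_n)$ on $F_n$. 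Concatenating these paths after reparametrizing time, and prepending a path from $\eins$ to $v_1$, produces $w$, and a routine verification shows that $w$ witnesses the strong asymptotic $G$-unitary equivalence of $\iota_1$ and $\iota_2$. For the interpolation, put $z_n = v_{n+1}v_n^*$: since $v_n$ and $v_{n+1}$ both carry $\iota_1(F_n)$ approximately onto $\iota_2(F_n)$, the unitary $z_n$ is $O(\eps_n)$-fixed on $K_n$ and approximately commutes with $\iota_2(F_n) = \eins\otimes F_n$. If we connect $z_n$ to $\eins$ by a continuous path of unitaries that are $O(\eps_n)$-fixed on $K_n$ and approximately commute with $\eins\otimes F_n$, then translating that path on the right by $v_n$ gives the sought segment from $v_n$ to $v_{n+1}$ — the conjugation behaviour on $F_n$ being undisturbed precisely because the path commutes with $\iota_2(F_n)$, which is $\ad(v_n)$ applied to $\iota_1(F_n)$ up to $\eps_n$.

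The remaining and decisive step is to connect such a $z_n$ to $\eins$ within the unitaries that are both $O(\eps_n)$-fixed on $K_n$ and approximately commuting with $\eins\otimes F_n$. Disregarding these two constraints this is automatic, since $\CD$ — being strongly self-absorbing, hence $\CZ$-stable with trivial $K_1$-group — has connected unitary group, as do the relevant relative commutants; thus $z_n$ is null-homotopic among the unitaries commuting with $\eins\otimes F_n$. To retain the invariance one brings in the unitary regularity of $\gamma\otimes\gamma$: with the help of the unital equivariant copy of $(\CD,\gamma)$ inside the relative central sequence algebra provided by Theorem \ref{thm:sssa}, and after a suitable tensorial reshuffling, one arranges the obstruction to an invariant, $\eins\otimes F_n$-commuting null-homotopy of $z_n$ to be a product of boundedly many multiplicative commutators of unitaries lying in $(\CD\otimes\CD)^{\gamma\otimes\gamma}_{\delta,K_n}$ for a suitably small $\delta = \delta(\eps_n,K_n) > 0$; by Definition \ref{unireg} each such commutator lies in $\CU_0\big((\CD\otimes\CD)^{\gamma\otimes\gamma}_{\eps_n,K_n}\big)$, which is precisely a path of approximately fixed unitaries connecting it to $\eins$, and assembling these furnishes the required homotopy of $z_n$. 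I expect this last point to be the genuine obstacle — turning the equivariance defect of a homotopy into a controlled product of commutators of approximately fixed unitaries, so that Definition \ref{unireg} applies; it is the only place where the hypothesis of unitary regularity is used, and omitting it recovers just the approximate $G$-innerness of Theorem \ref{thm:sssa}.
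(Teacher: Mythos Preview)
This theorem is not proved in the present paper: it is stated in the preliminaries section with a citation to previous work (the bracket points to \cite{Szabo16ssa}, though the content --- unitary regularity and strong asymptotic $G$-unitary equivalence --- belongs to \cite{Szabo16ssa2}), and no proof follows. There is therefore no argument here to compare your proposal against.

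Taken on its own, your outline has the right skeleton --- pass from a sequence $v_n$ of approximately fixed intertwiners to a continuous path by bridging consecutive terms, and invoke unitary regularity to keep the bridges inside the approximately fixed unitaries --- but the final paragraph, which you yourself flag as the ``genuine obstacle,'' is not a proof. You assert that after ``a suitable tensorial reshuffling'' the obstruction to an invariant, $\eins\otimes F_n$-commuting null-homotopy of $z_n=v_{n+1}v_n^*$ becomes a bounded product of commutators of approximately fixed unitaries, so that Definition \ref{unireg} applies; but you give no mechanism for producing that commutator decomposition. This is precisely the substance of the result, and ``one arranges'' hides all of it. The route actually taken in \cite{Szabo16ssa2} is more direct: rather than starting with arbitrary approximately fixed $v_n$ and then fighting to connect $z_n$ to $\eins$, one invokes (what is recalled here as) Proposition \ref{prop:commutator-ue} to obtain intertwining unitaries already in the form $u_nv_nu_n^*v_n^*$ with $u_n,v_n$ approximately fixed, so that unitary regularity applies to each intertwiner itself and yields a path to $\eins$ inside $\CU_0\big((\CD\otimes\CD)^{\gamma\otimes\gamma}_{\eps,K}\big)$. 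The residual issue of keeping the path approximately commuting with the image of $\iota_1$ is then handled via a relative-commutant/reindexation argument along the lines of Lemma \ref{lemma:basic-homotopy}. Your approach might be completable, but as written the crucial step is a restatement of the goal.
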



\section{Strengthened McDuff-type theorem}

The following is a continuous generalization of a key technical Lemma from \cite{Szabo16ssa}. Its proof is fairly analogous and employs a few straightforward modifications.

\begin{lemma}[\cf{Szabo16ssa}{2.1}] 
\label{one-side}
Let $G$ be a second-countable, locally compact group. Let $(\alpha,u): G\curvearrowright A$ and $(\beta,w): G\curvearrowright B$ be two cocycle actions on separable \cstar-algebras. Let $\phi: (A,\alpha,u)\to (B,\beta,w)$ be an injective, non-degenerate and equivariant $*$-homomorphism. Assume the following:

For every $\eps>0$, compact subset $K\subset G$ and finite subsets $F_A\fin A, F_B\fin B$, there exists a unitary path $z: [0,1]\to\CU(\tilde{B})$ with $z_0=\eins$ satisfying
\begin{enumerate}[label=\textup{(\ref*{one-side}\alph*)}]
\item $\|[z_t, \phi(a)]\|\leq\eps$ for every $a\in F_A$ and $0\leq t\leq 1$. \label{eq:oneside-a}
\item $\dist(z_1^*bz_1, \phi(A))\leq\eps$ for every $b\in F_B$. \label{eq:oneside-b}
\item $\phi(a)\beta_g(z_t) =_\eps \phi(a)z_t$ for every $g\in K$, $a\in F_A$ and $0\leq t\leq 1$. \label{eq:oneside-c}
\end{enumerate}
Then $\phi$ is strongly asymptotically unitarily equivalent to an isomorphism $\psi: A\to B$ inducing very strong cocycle conjugacy between $(\alpha,u)$ and $(\beta,w)$.
\end{lemma}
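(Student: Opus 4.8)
The plan is to run an intertwining (approximate-intertwining, one-sided Elliott) argument along the continuous paths provided by the hypothesis, mimicking the proof of \cite[2.1]{Szabo16ssa} but carrying the extra continuity data so that the limiting unitary equivalence is strongly asymptotic and the resulting cocycle relating $(\alpha,u)$ and $(\beta,w)$ becomes an asymptotic coboundary rather than merely an approximate one. First I would fix increasing finite sets $F_A^{(n)}\fin A$, $F_B^{(n)}\fin B$ with dense union, a decreasing null sequence $\eps_n$, and an exhausting sequence of compact sets $K_n\subset G$. Applying the hypothesis repeatedly, I build a sequence of unitary paths $z^{(n)}:[0,1]\to\CU(\tilde B)$ with $z^{(n)}_0=\eins$ such that $z^{(n)}_1$ nearly commutes with $\phi(F_A^{(n)})$, nearly conjugates $F_B^{(n)}$ into $\phi(A)$, and is nearly $\beta$-invariant modulo $\phi(A)$ on $K_n$; then set $Z_n = z^{(n)}_1 z^{(n-1)}_1\cdots z^{(1)}_1$ and arrange (by shrinking the tolerances at stage $n+1$ to depend on the finitely many already-chosen unitaries) that $\psi(a):=\lim_n Z_n\phi(a)Z_n^*$ exists for all $a\in A$, defines an injective $*$-homomorphism $A\to B$, and is surjective because of \ref{eq:oneside-b}. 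This is the standard intertwining bookkeeping; the novelty is only that each $Z_n$ is the endpoint of the concatenated path $t\mapsto z^{(n)}_t Z_{n-1}$, so reparametrising the concatenation of all these paths over $[1,\infty)$ produces a single norm-continuous unitary path $w:[1,\infty)\to\CU(\tilde B)$ with $w(1)=\eins$ implementing $\psi(a)=\lim_t w(t)\phi(a)w(t)^*$, i.e.\ $\phi\ue{}\psi$ strongly asymptotically, and — using \ref{eq:oneside-c} at each stage together with the fact that $\beta_g$ is a strict-continuous automorphism — one gets $\max_{g\in K}\|\beta_g(w(t))-w(t)\|\to 0$ for each compact $K$, so in fact $\phi$ and $\psi$ are strongly asymptotically $G$-unitarily equivalent in the sense of Definition \ref{Gue}\ref{Gue:2} (with $G=\set{1}$ one recovers the bald statement, but the $\beta$-invariance is what we need below).

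Next I would extract the cocycle. Since $\psi$ is an isomorphism and $\phi$ is equivariant for $(\alpha,u)\to(\beta,w)$, transporting $(\beta,w)$ back via $\psi$ and comparing with $(\phi\circ\alpha\circ\phi^{-1},\phi\circ u)$: for each $g\in G$ the automorphism $\psi^{-1}\circ\beta_g\circ\psi$ differs from $\phi^{-1}\circ\beta_g\circ\phi$ by the limit of $\ad(Z_n^*\beta_g(Z_n))$, and setting $v_g = \lim_n \phi^{-1}\big(Z_n^*\beta_g(Z_n)\big)$ — the limit existing in $\CM(A)$ strictly because of the near-commutation and near-invariance built into the construction — one checks $v$ is a $(\phi^{-1}\circ\beta\circ\phi)$-cocycle (equivalently, after applying $\phi$, a genuine cocycle relating the two cocycle actions on $B$) with the property that, by the continuous-path construction above, $v_g = \lim_{t\to\infty} \phi^{-1}(w(t)^*\beta_g(w(t)))$ is realised as a strict limit along a continuous path of coboundaries $x_t := \phi^{-1}(w(t)^*)$, $x_1=\eins$, uniformly on compact subsets of $G$. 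Thus $v$ is an asymptotic coboundary in the sense of Definition \ref{def:scc}(iii), and this is exactly the data witnessing that $\psi$ induces a very strong cocycle conjugacy $(\alpha,u)\vscc(\beta,w)$ in the sense of Definition \ref{defi:conjugacies}\ref{item:vscc}.

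The main obstacle I expect is the simultaneous bookkeeping: one must choose the tolerance $\eps_{n+1}$ and the finite sets at stage $n+1$ so small/large that all three conclusions survive the passage to the limit — namely (a) $Z_n\phi(a)Z_n^*$ is Cauchy, (b) surjectivity of $\psi$, and crucially (c) that the conjugating quantities $Z_n^*\beta_g(Z_n)$ themselves converge strictly \emph{and} stay close to $\phi(A)$ so that $v_g$ lands in $\CU(\CM(A))$ rather than merely $\CU(\CM(B))$. Handling (c) requires feeding $\phi(F_A^{(n)})$, $\beta_g$-images of previously chosen unitaries, and approximate units for $\phi(A)$ (using non-degeneracy of $\phi$) all into the finite sets $F_A, F_B$ at the next stage; since $\phi$ is non-degenerate and injective, $\phi^{-1}$ is well-defined on the relevant strict closure and the estimates propagate. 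The continuity-path portion is then essentially free, since concatenation and reparametrisation of finitely-then-countably-many norm-continuous unitary paths with matching endpoints is routine, and the uniform-on-compacta $\beta$-invariance of $w(t)$ is inherited from \ref{eq:oneside-c} stage by stage. Everything else is a faithful transcription of the sequential argument in \cite[2.1]{Szabo16ssa} with "sequence of unitaries" upgraded to "continuous path of unitaries" throughout.
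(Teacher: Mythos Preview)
Your strategy---upgrading the sequential one-sided intertwining of \cite[2.1]{Szabo16ssa} by concatenating the unitary paths $z^{(n)}$ into a single continuous path---is exactly the paper's. But two steps in your execution are wrong as written.

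First, the product order $Z_n = z^{(n)}_1 z^{(n-1)}_1\cdots z^{(1)}_1$ does not make $Z_n\phi(a)Z_n^*$ Cauchy. The increment equals $[z^{(n)}_1,\, Z_{n-1}\phi(a)Z_{n-1}^*]\,z^{(n)*}_1$, and the inner term $Z_{n-1}\phi(a)Z_{n-1}^*$ is a generic element of $B$, not of $\phi(A)$; hypothesis \ref{eq:oneside-a} only controls commutators with $\phi(F_A)$, and nothing in the hypotheses lets you force $z^{(n)}$ to nearly commute with the previously chosen unitaries in $\tilde{B}$. The paper uses the opposite order $x_n = z^{(1)}_1\cdots z^{(n)}_1$ (and path $x_t = x_{n}\,z^{(n+1)}_{t-n}$), so that the new path sits adjacent to $\phi(a)$ and \ref{eq:oneside-a} gives the Cauchy estimate $\|\psi_n(a)-\psi_{n-1}(a)\|\leq 2^{-n}$ directly.

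Second, and more conceptually, your formulas $v_g = \lim_n \phi^{-1}(Z_n^*\beta_g(Z_n))$ and $x_t := \phi^{-1}(w(t)^*)$ are ill-defined: $\phi$ is not surjective, so $\phi^{-1}$ has no meaning on $\CU(\tilde{B})$, and nothing in the hypotheses drives $Z_n^*\beta_g(Z_n)$ into the image of $\phi$ (nor is this needed). The cocycle witnessing very strong cocycle conjugacy must live in $\CM(B)$, not $\CM(A)$. The paper first observes, using non-degeneracy of $\phi$, that the $\phi(a)$ in \ref{eq:oneside-c} may be replaced by any $b\in F_B$; feeding $b_j x_{n-1}$ into this at stage $n$ shows that $b\cdot x_t\beta_g(x_t^*)$ is Cauchy for every $b\in B$, uniformly on compacta in $G$. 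Hence $v_g := \lim_{t\to\infty} x_t\beta_g(x_t^*)$ exists strictly in $\CU(\CM(B))$, and one checks $\psi\circ\alpha_g=\ad(v_g)\circ\beta_g\circ\psi$ together with the $2$-cocycle identity directly. The asymptotic-coboundary witness is simply the path $t\mapsto x_t$ in $\tilde{B}\subset\CM(B)$. Relatedly, your norm claim $\max_{g\in K}\|\beta_g(w(t))-w(t)\|\to 0$ is too strong when $B$ is non-unital: \ref{eq:oneside-c} yields only the strict version (after multiplying by $b\in B$), which is exactly what the strict limit requires---and note the lemma's conclusion is strong asymptotic \emph{unitary} equivalence, not $G$-unitary equivalence.
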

\begin{proof}
We first comment that by the non-degeneracy of $\phi$, one can replace the elements $\phi(a)$ in \ref{eq:oneside-c}, for $a\in F_A$, by any element $b\in F_B$.

Let $\set{a_n}_{n\in\IN}\subset A$ and $\set{b_n}_{n\in\IN}\subset B$ be dense sequences. Since $G$ is $\sigma$-compact, write $G=\bigcup_{n\in\IN} K_n$ for an increasing union of compact subsets $1_G\in K_n$. We are going to add paths of unitaries to $\phi$ step by step:

In the first step, choose some $a_{1,1}\in A$ and $z^{(1)}: [0,1]\to\CU(\tilde{B})$ with $z^{(1)}_0=\eins$ such that for all $0\leq t\leq 1$, we have
\begin{itemize}
\item $z^{(1)*}_1b_1z^{(1)}_1 =_{1/2} \phi(a_{1,1})$;
\item $\|[z^{(1)}_t,\phi(a_1)]\|\leq 1/2$;
\item $b_1\beta_g(z^{(1)}_t)=_{1/2} b_1z^{(1)}_t$ for all $g\in K_1$.
\end{itemize}
In the second step, choose $a_{2,1},a_{2,2}\in A$ and $z^{(2)}: [0,1]\to\CU(\tilde{B})$ with $z^{(2)}_0=\eins$ such that for every $0\leq t\leq 1$ we have
\begin{itemize}
\item $z^{(2)*}_1(z^{(1)*}_1b_jz^{(1)}_1)z^{(2)}_1 =_{1/4} \phi(a_{2,j})$ for $j=1,2$;
\item $\|[z^{(2)}_t,\phi(a_j)]\|\leq 1/4$ for $j=1,2$;
\item $\|[z^{(2)}_t,\phi(a_{1,1})]\|\leq 1/4$;
\item $(b_jz^{(1)}_1)\beta_{g}(z^{(2)}_t) =_{1/4} (b_jz^{(2)}_1)z^{(2)}_t$ for all $g\in K_2$ and $j=1,2$.
\end{itemize}
Now assume that for some $n\in\IN$, we have found $z^{(1)},\dots,z^{(n)}: [0,1]\to\CU(\tilde{B})$ and $\set{a_{m,j}}_{m,j\leq n}\subset A$ satisfying for every $0\leq t\leq 1$ that
\begin{equation} \label{eq1:c1}
z^{(n)*}_1(z^{(n-1)*}_1\cdots z^{(1)*}_1b_jz^{(1)}_1\cdots z^{(n-1)}_1)z^{(n)}_1 =_{2^{-n}} \phi(a_{n,j})~ \text{for}~ j\leq n;
\end{equation}
\begin{equation} \label{eq1:c2}
\|[z^{(n)}_t,\phi(a_j)]\|\leq 2^{-n}~ \text{for}~ j\leq n;
\end{equation}
\begin{equation} \label{eq1:c3}
\|[z^{(n)}_t,\phi(a_{m,j})]\|\leq 2^{-n}~ \text{for}~ m<n~ \text{and}~ j<m ;
\end{equation}
\begin{equation} \label{eq1:c4}
(b_jz^{(1)}_1\cdots z^{(n-1)}_1)\beta_{g}(z^{(n)}_t) =_{2^{-n}} (b_jz^{(1)}_1\cdots z^{(n-1)}_1)z^{(n)}_t~ \text{for}~ g\in K_n~ \text{and}~ j\leq n.
\end{equation}
Then we can again apply our assumptions to find $z^{(n+1)}: [0,1]\to\CU(\tilde{B})$ with $z^{(n+1)}_0=\eins$ and $\set{a_{n+1,j}}_{j\leq n+1}\subset A$ so that for every $0\leq t\leq 1$ we have
\begin{itemize}
\item $z^{(n+1)*}_1(z^{(n)*}_1\cdots z^{(1)*}_1 b_j z^{(1)}_1\cdots z^{(n)}_1)z^{(n+1)}_1 =_{2^{-(n+1)}} \phi(a_{n+1,j})$ for $j\leq n+1$;
\item $\|[z^{(n+1)}_t,\phi(a_j)]\|\leq 2^{-(n+1)}$ for $j\leq n+1$;
\item $\|[z^{(n+1)}_t,\phi(a_{m,j})]\|\leq 2^{-(n+1)}$ for $m<n+1$ and $j<n+1$;
\item $(b_j z^{(1)}_1\cdots z^{(n)}_1)\beta_{g}(z^{(n+1)}_t) =_{2^{-(n+1)}} (b_j z^{(1)}_1\cdots z^{(n)}_1)z^{(n+1)}_t$ for all $g\in K_{n+1}$ and $j\leq n+1$.
\end{itemize}
Carry on inductively. We define a norm-continuous path of unitaries $x: [0,\infty)\to\CU(\tilde{B})$ via $x_t = z^{(1)}_1\cdots z^{(n)}_1 z^{(n+1)}_{t-n}$ for $n\geq 0$ with $n\leq t\leq n+1$.
Note that $x_0=\eins$, and this map is well-defined since every  path $z^{(n)}$ starts at the unit. Similarly we define a point-norm continuous path of  $*$-homomorphisms $\psi_t: A\to B$ for $t\geq 0$ via $\psi_t = \ad(x_t)\circ\phi$. 

Now let us observe a number of facts:\ 
By condition \eqref{eq1:c2}, the net $(\psi_t(a_j))_{t\geq 0}$ is Cauchy for all $j\in\IN$. Since the set $\set{a_j}_{j\in\IN}\subset A$ is dense, this implies that the net $(\psi_t)_{t\geq 0}$ converges to some $*$-homomorphism $\psi: A\to B$ in the point-norm topology. Then $\psi$ is clearly strongly asymptotically unitarily equivalent to $\phi$.

Exactly as in the proof of \cite[2.1]{Szabo16ssa}, one deduces from \eqref{eq1:c1} and \eqref{eq1:c3} that $\psi$ is surjective, and hence an isomorphism.

By condition \eqref{eq1:c4}, we have that the assignments $t\mapsto b_j\cdot x_t \beta_g(x_t^*)$ yield Cauchy nets for every $j\in\IN$ and $g\in G$, with uniformity on compact subsets of $G$.
Since $\set{b_j}_{j\in\IN}\subset B$ is dense, it follows that every path of functions of the form $[g\mapsto b\cdot x_t\beta_g(x_t^*)]$ (for $b\in B$) converges uniformly on compact sets. Since $\beta$ is point-norm continuous, it follows that the functions 
\[
g\mapsto \beta_g\Big( \beta_{g^{-1}}(b)^*\cdot x_t \beta_g(x_t^*) \Big)^* = x_t\beta_g(x_t^*)\cdot b
\]
must also converge uniformly on compact sets of $G$ as $t\to\infty$, for every $b\in B$.

It follows that the strict limit $v_g = \lim_{t\to\infty} x_t\beta_g(x_t^*) \in \CU(\CM(B))$ exists for every $g\in G$, and that this convergence is uniform on compact subsets of $G$. In particular, the assignment $g\mapsto v_g\in\CU(\CM(B))$ is strictly continuous. 

Exactly as in the proof of \cite[2.1]{Szabo16ssa}, it follows that $\psi\circ\alpha_g=\ad(v_g)\circ\beta_g\circ\psi$ and $v_g\beta_g(v_h)w(g,h)v_{gh}^*= \psi(u(g,h))$ for all $g,h\in G$. This finishes the proof.
\end{proof}

Here comes the main result of this section, which is an improved version of the equivariant McDuff theorem \cite[3.7, 4.7]{Szabo16ssa} for unitarily regular actions:

\begin{theorem} \label{thm:strong-absorption}
Let $G$ be a second-countable, locally compact group. Let $\gamma: G\curvearrowright\CD$ be a unitarily regular and semi-strongly self-absorbing action on a separable, unital \cstar-algebra. Let $(\alpha,u): G\curvearrowright A$ be a cocycle action on a separable \cstar-algebra. Suppose that there exists a unital and equivariant $*$-homomorphism from $(\CD,\gamma)$ to $\big( F_{\infty,\alpha}(A), \tilde{\alpha}_\infty \big)$. Then the equivariant second-factor embedding 
\[
\eins_\CD\otimes\id_A: (A,\alpha,u)\to (\CD\otimes A, \gamma\otimes\alpha, \eins_\CD\otimes u)
\]
is strongly asymptotically unitarily equivalent to an isomorphism that induces very strong cocycle conjugacy. In particular, we have $(\alpha,u)\vscc (\alpha\otimes\gamma,u\otimes\eins_\CD)$.
\end{theorem}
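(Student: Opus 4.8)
The plan is to apply Lemma~\ref{one-side} to the injective, non-degenerate and equivariant $*$-homomorphism
\[
\phi=\eins_\CD\otimes\id_A:(A,\alpha,u)\longrightarrow(\CD\otimes A,\gamma\otimes\alpha,\eins_\CD\otimes u).
\]
The asserted conclusion is then exactly the output of that lemma, and the final ``in particular'' follows by post-composing the resulting isomorphism with the flip isomorphism $(\CD\otimes A,\gamma\otimes\alpha,\eins_\CD\otimes u)\cong(A\otimes\CD,\alpha\otimes\gamma,u\otimes\eins_\CD)$. Note that the hypothesis is precisely condition~\ref{equi-McDuff3} of Theorem~\ref{equi-McDuff}. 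So everything reduces to verifying, for each $\eps>0$, compact $K\subset G$ and finite sets $F_A\fin A$, $F_B\fin\CD\otimes A$, the existence of a unitary path $z:[0,1]\to\CU(\widetilde{\CD\otimes A})$ with $z_0=\eins$ satisfying \ref{eq:oneside-a}--\ref{eq:oneside-c}. This is the path-refinement of the central step in the proof of \cite[3.7, 4.7]{Szabo16ssa}, and the argument runs parallel to it.

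First I would produce two approximately commuting, approximately equivariant copies of $\CD$ inside $\CD\otimes A$. One is the external copy $\CD\otimes\eins_A$, which is equivariant for $\gamma$ and commutes exactly with $\eins_\CD\otimes A$. The other comes from the given unital equivariant $*$-homomorphism $(\CD,\gamma)\to(F_{\infty,\alpha}(A),\tilde\alpha_\infty)$: choosing a representative of its range in $A_\infty\cap A'$ and descending to a large finite level yields, for a suitable finite set of contractions $d\in\CD$, elements $b_d\in A$ that are approximately multiplicative, self-adjoint and unit-compatible, approximately commute with a prescribed finite subset of $A$ (containing $F_A$ and the $A$-legs of $F_B$), and are approximately $\alpha$-equivariant on $K$ — all with an error $\eps'$ we keep at our disposal. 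Then $\eins_\CD\otimes b_d\in\eins_\CD\otimes A$ approximately commutes with $\eins_\CD\otimes A$ and exactly commutes with $\CD\otimes\eins_A$, so together with the external copy these give an approximate and approximately equivariant embedding of $(\CD\otimes\CD,\gamma\otimes\gamma)$ into $(\CD\otimes A,\gamma\otimes\alpha)$ on the relevant finite data. (To keep $z$ an honest unitary path in the end, this is carried out at a fixed finite level of $(\CD\otimes A)_\infty$ and descended by a reindexation argument.)

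Next I would use that $\gamma$ is unitarily regular: by Theorem~\ref{thm:saGi-hf}, for the finite set $F_\CD=\{d\}\fin\CD$ above and suitable $\eps'>0$, $K'\supseteq K$ there is a unitary path $v:[0,1]\to\CU\big((\CD\otimes\CD)^{\gamma\otimes\gamma}_{\eps',K'}\big)$ with $v_0=\eins$ such that $v_1$ implements the half-flip of $\gamma$ up to $\eps'$ on $F_\CD$. Transporting this whole path through the approximate embedding $\CD\otimes\CD\to\CD\otimes A$ of the previous step — and correcting the images to honest unitaries, which is harmless since they stay uniformly near $\CU(\widetilde{\CD\otimes A})$ — produces the desired $z:[0,1]\to\CU(\widetilde{\CD\otimes A})$ with $z_0=\eins$. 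Here \ref{eq:oneside-a} holds because $z_t$ is built from the external copy (which commutes with $\eins_\CD\otimes A$) and from $\{\eins_\CD\otimes b_d\}$ (which approximately commutes with $\eins_\CD\otimes A$); \ref{eq:oneside-c} holds because $v_t\in(\CD\otimes\CD)^{\gamma\otimes\gamma}_{\eps',K'}$ and the embedding is approximately equivariant, so $(\gamma\otimes\alpha)_g(z_t)$ is close to $z_t$ after multiplication by elements of $\phi(A)$ — this is exactly where the multiplier $\phi(a)$ in \ref{eq:oneside-c}, together with the non-degeneracy of $\phi$, is used to absorb that the internal copy is equivariant only in the sense of $F_{\infty,\alpha}(A)$; and \ref{eq:oneside-b} holds because $z_1$ carries the external copy approximately onto $\{\eins_\CD\otimes b_d\}\subset\eins_\CD\otimes A$, so that for $d\otimes a\in F_B$ one gets $z_1^*(d\otimes a)z_1\approx(z_1^*(d\otimes\eins_A)z_1)(\eins_\CD\otimes a)\approx(\eins_\CD\otimes b_d)(\eins_\CD\otimes a)\in\eins_\CD\otimes A=\phi(A)$, using \ref{eq:oneside-a} at $t=1$ once more.

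The one delicate point is the order of the quantifiers: one must \emph{first} fix the half-flip path $v$ — which pins down how far the transport error propagates — then shrink the tolerance $\eps'$ of the partial copy of $\CD$ accordingly, and only \emph{then} reindex to descend to an honest path in $\CD\otimes A$. I expect this bookkeeping, and above all the essential use of the $\CU_0$-version of the half-flip, to be the main technical content: it is precisely here that unitary regularity is indispensable, since without Theorem~\ref{thm:saGi-hf} one would only obtain $v$ in $\CU\big((\CD\otimes\CD)^{\gamma\otimes\gamma}_{\eps',K'}\big)$ rather than in its connected-to-$\eins$ part, which would yield strong rather than very strong cocycle conjugacy and merely recover \cite[3.7, 4.7]{Szabo16ssa}. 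Granting the verification, Lemma~\ref{one-side} delivers an isomorphism $\psi:A\to\CD\otimes A$, strongly asymptotically unitarily equivalent to $\eins_\CD\otimes\id_A$, that induces very strong cocycle conjugacy between $(\alpha,u)$ and $(\gamma\otimes\alpha,\eins_\CD\otimes u)$, and the flip isomorphism turns this into $(\alpha,u)\vscc(\alpha\otimes\gamma,u\otimes\eins_\CD)$, completing the proof.
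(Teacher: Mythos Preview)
Your strategy is correct and matches the paper's: both reduce to verifying the hypotheses of Lemma~\ref{one-side} for $\eins_\CD\otimes\id_A$, both produce two commuting copies of $(\CD,\gamma)$ inside the target (the external $\CD\otimes\eins$ and the internal one coming from $F_{\infty,\alpha}(A)$), and both push the half-flip path from Theorem~\ref{thm:saGi-hf} through the resulting map from $\CD\otimes\CD$.

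The execution differs in one respect worth noting. You work at a finite level with an approximate, approximately equivariant copy of $\CD\otimes\CD$ in $\CD\otimes A$, then transport $v$, correct to honest unitaries, and reindex. The paper instead works directly in the central sequence algebra $F_{\gamma\otimes\alpha}(\eins_\CD\otimes A,(\CD\otimes A)_\infty)$, where the two copies of $\CD$ give an \emph{exact} unital equivariant $*$-homomorphism $\phi:(\CD\otimes\CD,\gamma\otimes\gamma)\to F_{\gamma\otimes\alpha}(\eins_\CD\otimes A,(\CD\otimes A)_\infty)$; the path $u_t=\phi(v_t)$ is then an honest unitary path there, and Blackadar's unitary lifting theorem \cite[5.1]{Blackadar15} lifts it to a sequence of unitary paths $z^{(n)}:[0,1]\to\CU((\CD\otimes A)^\sim)$ starting at $\eins$. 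This buys you two things: it eliminates the ``correct to honest unitaries while preserving $z_0=\eins$ and continuity'' step (which in your sketch is a genuine, if routine, loose end), and it replaces your quantifier bookkeeping by a single choice of large $n$ at the end. Conversely, your approach has the virtue of being self-contained and not invoking the lifting theorem. Either route works; the paper's is cleaner.
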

\begin{proof}
The proof is very similar to \cite{Szabo16ssa}, apart from a small modification.

Keeping in mind \cite[1.11]{Szabo16ssa}, we have a natural isomorphism
\[
F(\eins_\CD\otimes A,(\CD\otimes A)_\infty)\cong F(\eins_\CD\otimes A, \big( (\CD\otimes A)^\sim \big)_\infty ).
\]
Denote by $\pi: \big( (\CD\otimes A)^\sim \big)_\infty\cap (\eins_\CD\otimes A)' \to F(\eins_\CD\otimes A,(\CD\otimes A)_\infty)$ the canonical surjection. 
Note that by assumption, we have an equivariant, unital $*$-homomorphism from $(\CD,\gamma)$ to $\big( F_{\infty,\alpha}(A), \tilde{\alpha}_\infty \big)$. Consider the canonical inclusions 
\[
F_{\infty}(A) ,~\CD ~\subset~ F(\eins_\CD\otimes A,(\CD\otimes A)_\infty),
\]
which define commuting \cstar-subalgebras. Since these inclusions are natural, they are equivariant with respect to the induced actions of $\alpha$, $\gamma$ and $\gamma\otimes\alpha$.
By assumption, it follows that we have a unital and equivariant $*$-homomorphism
\[
\phi: (\CD\otimes\CD,\gamma\otimes\gamma)\to \big( F_{\gamma\otimes\alpha}(\eins_\CD\otimes A,(\CD\otimes A)_\infty) , (\gamma\otimes\alpha)^\sim_\infty \big)
\]
satisfying $\phi(d\otimes\eins_\CD)\cdot (\eins_\CD\otimes a)=d\otimes a$ and $\phi(\eins_\CD\otimes d)\cdot (\eins_\CD\otimes a)\in\eins_{\CD}\otimes A_\infty$ for all $a\in A$ and $d\in\CD$.

Now let $\eps>0$, $F_\CD\fin\CD$, $F_A\fin A$ and $K\subset G$ be a compact set. Without loss of generality, assume that $F_\CD$ and $F_A$ consist of contractions.
Since $\gamma$ is unitarily regular, we can apply \ref{thm:saGi-hf} and choose a unitary path $v: [0,1]\to\CU(\CD\otimes\CD)$ with $v_0=\eins$ and
\[
\max_{0\leq t\leq 1}~ \max_{g\in K}~ \|v_t-(\gamma\otimes\gamma)_g(v_t)\|\leq\eps \quad\text{and}\quad v_1^*(d\otimes\eins_\CD)v_1 =_\eps \eins_\CD\otimes d
\]
for all $d\in F_\CD$. 
The unitary path 
\[
u: [0,1] \to \CU\Big( F_{\gamma\otimes\alpha}(\eins_\CD\otimes A,(\CD\otimes A)_\infty) \Big), \quad u_t=\phi(v_t)
\]
then satisfies
\[
u_1^*(d\otimes a)u_1 = \phi(v_1(d\otimes\eins_\CD)v_1^*)\cdot (\eins_\CD\otimes a) =_\eps \phi(\eins_\CD\otimes d)\cdot (\eins_\CD\otimes a) \in \eins_\CD\otimes A_\infty
\] 
for all $a\in A$ with $\|a\|\leq 1$ and $d\in F_\CD$, and moreover
\begin{equation} \label{eq2:fixed}
\|u_t-(\gamma\otimes\alpha)^\sim_{\infty,g}(u_t)\| \leq \|v_t-(\gamma\otimes\gamma)_g(v_t)\| \leq\eps
\end{equation}
for all $g\in K$ and $0\leq t\leq 1$.

Applying the unitary lifting theorem \cite[5.1]{Blackadar15}, we can find paths of unitaries $z^{(n)}: [0,1]\to\CU\big((\CD\otimes A)^\sim\big)$ with $z^{(n)}_0=\eins$ and such that \[
z=[(z^{(n)})]: [0,1]\to \big( (\CD\otimes A)^\sim \big)_\infty\cap (\eins_\CD\otimes A)'
\]
satisfies $u_t=\pi(z_t)$ for all $0\leq t\leq 1$. 
Note that each $u_t$ is a continuous element with respect to $(\gamma\otimes\alpha)^\sim_\infty$, so it follows that 
\[
[g\mapsto (\eins_\CD\otimes a)\cdot (\gamma\otimes\alpha)_{\infty,g}(z_t)]
\] 
is a norm-continuous map on $G$ for every $a\in A$. Using \cite[2.2]{Szabo16ssa}, we thus see that also
\[
g\mapsto \Big( (\eins_\CD\otimes a)\cdot (\gamma\otimes\alpha)_{g}(z^{(n)}_t) \Big)_{n\in\IN} \in \ell^\infty(\IN,\CD\otimes A)
\]
is continuous. In particular, we obtain a uniformly continuous map
\[
[0,1]\times K \to \ell^\infty(\IN,\CD\otimes A),\quad (t,g) \mapsto \Big( (\eins_\CD\otimes a)\cdot (\gamma\otimes\alpha)_{g}(z^{(n)}_t) \Big)_{n\in\IN}.
\]
From \eqref{eq2:fixed} it thus follows that
\begin{equation} \label{eq:u-fixed}
\limsup_{n\to\infty}~ \max_{g\in K}~ \max_{0\leq t\leq 1}~ \big\| (\eins_\CD\otimes a)\cdot \big( (\gamma\otimes\alpha)_{g}(z^{(n)}_t)-z^{(n)}_t \big) \big\| \leq \eps
\end{equation}
for all $a\in A$ with $\|a\|\leq 1$.

Moreover, as $z$ is a lift for $u$ by choice, we have
\[
\dist(z_1^*(d\otimes a)z_1, \eins_\CD\otimes A_\infty)\leq\eps
\] 
for all $a\in A$ with $\|a\|\leq 1$ and $d\in F_\CD$.
  
It follows that there exists some $n$ with
\begin{itemize}
\item $\dst \max_{0\leq t\leq 1}~\|[z^{(n)}_t, \eins_\CD\otimes a]\|\leq\eps$ for all $a\in F_A$;
\item $\dist( z^{(n)*}_1(d\otimes a)z^{(n)}_1, \eins_\CD\otimes A)\leq 2\eps$ for all $d\in F_\CD$ and $a\in F_A$;
\item $\dst\max_{g\in K}~\max_{0\leq t\leq 1}~\|(\eins_\CD\otimes a)\cdot \big( z^{(n)}_t-(\gamma\otimes\alpha)_g(z^{(n)}_t) \big)\| \leq 2\eps$.
\end{itemize}
So we have met the conditions of \ref{eq:oneside-a}, \ref{eq:oneside-b} and \ref{eq:oneside-c} for the equivariant embedding $\eins_\CD\otimes\id_A: (A,\alpha,u)\to (\CD\otimes A, \gamma\otimes\alpha, \eins_\CD\otimes u)$. The claim follows. 
\end{proof}


\section{Optimal McDuff-type theorem for compact groups}

In this section, we turn to the case of compact group actions.
Our main observation here is that, upon a close inspection of the proofs of \ref{one-side} and \ref{thm:strong-absorption}, one can further improve the absorption to conjugacy for compact group actions.

\begin{rem}
Let $G$ be a compact group and $\gamma: G\curvearrowright\CD$ a strongly self-absorbing action. 
It was observed in \cite[4.10]{Szabo16ssa} that an action $\alpha: G\curvearrowright A$ on a separable, unital \cstar-algebra is $\gamma$-absorbing if and only if $\alpha$ is conjugate to $\alpha\otimes\gamma$. This relies on the more general observation that on unital \cstar-algebras, two compact group actions are conjugate if and only if they are strongly cocycle conjugate. 
This, in turn, relied on an observation \cite[2.4]{Izumi04} of Izumi stating that cocycles close to the unit are coboundaries. It is not known whether this can be generalized to the non-unital case. 

For unitarily regular actions $\gamma$, however, it turns out that we can always obtain conjugacy between $\alpha$ and $\alpha\otimes\gamma$ upon a closer inspection of the proofs in the previous section.
The crucial part about unitary regularity here is that, within the proof of the equivariant McDuff theorem, one needs some kind of gadget to lift a $G$-invariant unitary to a $G$-invariant unitary under a certain quotient map; it also gives us the strong asymptotic $G$-unitary equivalence in the statement of \ref{thm:McDuff-compact}.
It would be natural to expect that conjugacy can be arranged without assuming unitary regularity, however this would for example presuppose a new way of proving the non-equivariant, non-unital McDuff theorem not relying on unitary regularity, which to the best of my knowledge (and despite some effort) does not yet exist.
\end{rem}

\begin{lemma} 
\label{one-side-compact}
Let $G$ be a second-countable, compact group. Let $\alpha: G\curvearrowright A$ and $\beta: G\curvearrowright B$ be two actions on separable \cstar-algebras. Let $\phi: (A,\alpha)\to (B,\beta)$ be an injective, non-degenerate and equivariant $*$-homomorphism. Suppose that for every $\eps>0$ and every pair of finite subsets $F_A\fin A, F_B\fin B$, there exists a unitary path $z: [0,1]\to\CU(\tilde{B}^\beta)$ with $z_0=\eins$ satisfying
\[
\|[z_t, \phi(a)]\|\leq\eps \quad\text{for every } a\in F_A \text{ and } 0\leq t\leq 1,
\]
and 
\[
\dist(z_1^*bz_1, \phi(A))\leq\eps \quad\text{for every } b\in F_B.
\]
Then $\phi$ is strongly asymptotically $G$-unitarily equivalent to an isomorphism. In particular, $\alpha$ and $\beta$ are conjugate.
\end{lemma}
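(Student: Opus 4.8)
The plan is to run the one-sided approximate-intertwining argument from the proof of Lemma \ref{one-side}, but carried out entirely inside the fixed-point algebra. The whole point of phrasing the hypothesis with $\CU(\tilde{B}^\beta)$ rather than $\CU(\tilde{B})$ is that the $1$-cocycle produced in the limit is then trivial, so one lands at honest conjugacy rather than merely very strong cocycle conjugacy.

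First I would fix dense sequences $\{a_n\}_n\subset A$ and $\{b_n\}_n\subset B$ and construct inductively, each step by an application of the hypothesis, unitary paths $z^{(n)}: [0,1]\to\CU(\tilde{B}^\beta)$ with $z^{(n)}_0=\eins$, together with auxiliary elements $a_{n,j}\in A$ for $j\le n$, such that for all $t\in[0,1]$: (i) $z^{(n)*}_1\big(z^{(n-1)*}_1\cdots z^{(1)*}_1 b_j z^{(1)}_1\cdots z^{(n-1)}_1\big)z^{(n)}_1 =_{2^{-n}} \phi(a_{n,j})$ for $j\le n$; (ii) $\|[z^{(n)}_t,\phi(a_j)]\|\le 2^{-n}$ for $j\le n$; (iii) $\|[z^{(n)}_t,\phi(a_{m,j})]\|\le 2^{-n}$ for $m<n$ and $j<m$. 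This is verbatim the scheme of Lemma \ref{one-side} with condition \ref{eq:oneside-c} deleted, since that condition is not needed here: the $z^{(n)}_t$ are genuinely $\beta$-fixed. Then set $x_t = z^{(1)}_1\cdots z^{(n)}_1 z^{(n+1)}_{t-n}$ for $n\le t\le n+1$, obtaining a norm-continuous path of unitaries in $\CU(\tilde{B}^\beta)$ with $x_0=\eins$, and put $\psi_t=\ad(x_t)\circ\phi$. Because each $x_t$ lies in the fixed-point algebra, each $\psi_t$ is equivariant, and the limiting data $v_g=\lim_{t\to\infty} x_t\beta_g(x_t^*)$ is simply $\eins$.

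Next, condition (ii) makes $(\psi_t(a_j))_t$ Cauchy for every $j$, so by density $\psi_t$ converges in the point-norm topology to a $*$-homomorphism $\psi: A\to B$, which is equivariant (being a point-norm limit of equivariant maps, using norm-continuity of $\beta$), isometric and hence injective (since $\|\psi(a)\|=\|\phi(a)\|$ and $\phi$ is injective), and surjective by the same computation as in the proof of Lemma \ref{one-side} (ultimately \cite[2.1]{Szabo16ssa}), using conditions (i) and (iii). Finally, $\psi$ is strongly asymptotically $G$-unitarily equivalent to $\phi$ via the path $t\mapsto x_{t-1}$, which takes values in $\CU(\tilde{B}^\beta)$; here one invokes the compact-group refinement of strong asymptotic $G$-unitary equivalence from Definition \ref{Gue}(iii), whose asymptotic $\beta$-invariance requirement on the path is automatic. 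Hence $\psi: (A,\alpha)\to(B,\beta)$ is an equivariant isomorphism, and in particular $\alpha\cong\beta$.

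I do not anticipate a genuine obstacle: all the substance already resides in Lemma \ref{one-side}, and the only new observation is the bookkeeping fact that conjugating by fixed-point unitaries throughout makes the limiting coboundary data vanish, collapsing very strong cocycle conjugacy to conjugacy. The one point requiring a little care is that the surjectivity computation of \cite[2.1]{Szabo16ssa} transfers unchanged, namely that the accumulated conjugates $z^{(1)}_1\cdots z^{(n)}_1$ bring each $b_j$ to within a summable error of the (shifting) image of $\phi$, while the tail increments $z^{(m)}_t$ for $m>n$ do not subsequently move the relevant $\phi(a_{n,j})$ by more than a summable amount; but conditions (i) and (iii) are arranged precisely for this, so this is routine.
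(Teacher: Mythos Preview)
Your proposal is correct and follows exactly the approach of the paper: run the inductive scheme of Lemma~\ref{one-side} with all unitary paths taken in $\CU(\tilde{B}^\beta)$, so that condition~\ref{eq:oneside-c} and the cocycle bookkeeping become vacuous and the limiting intertwiner is an honest equivariant isomorphism. The paper's own proof says precisely this in two lines, deferring the details to the proof of Lemma~\ref{one-side} and the classical one-sided intertwining argument.
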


\begin{proof}
Proving this is completely identical to the classical one-sided intertwining result \cite[2.3.5]{Rordam}. Proceed as in the proof of \ref{one-side}; since the paths $z^{(n)}$ take values in the fixed point algebra, one may simply omit everything related to the cocycles.
\end{proof}

\begin{theorem} \label{thm:McDuff-compact}
Let $G$ be a second-countable, compact group. Let $\gamma: G\curvearrowright\CD$ be a unitarily regular and strongly self-absorbing action on a separable, unital \cstar-algebra. Let $\alpha: G\curvearrowright A$ be an action on a separable \cstar-algebra. Suppose that there exists a unital and equivariant $*$-homomorphism from $(\CD,\gamma)$ to $\big( F_{\infty,\alpha}(A), \tilde{\alpha}_\infty \big)$. Then the equivariant second-factor embedding 
\[
\eins_\CD\otimes\id_A: (A,\alpha)\to (\CD\otimes A, \gamma\otimes\alpha)
\]
is strongly asymptotically $G$-unitarily equivalent to an isomorphism. In particular, $\alpha$ is conjugate to $\alpha\otimes\gamma$.
\end{theorem}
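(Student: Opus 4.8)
The plan is to rerun the proof of Theorem \ref{thm:strong-absorption}, deleting the $2$-cocycles and forcing every auxiliary unitary into the relevant fixed-point algebra, so that at the end one invokes Lemma \ref{one-side-compact} in place of Lemma \ref{one-side}. As there, I would first use \cite[1.11]{Szabo16ssa} to identify $F(\eins_\CD\otimes A,(\CD\otimes A)_\infty)\cong F\big(\eins_\CD\otimes A,((\CD\otimes A)^\sim)_\infty\big)$, write $\pi$ for the canonical surjection from $((\CD\otimes A)^\sim)_\infty\cap(\eins_\CD\otimes A)'$ onto this algebra, and combine the hypothesis with the commuting equivariant inclusions $F_\infty(A),\CD\subset F(\eins_\CD\otimes A,(\CD\otimes A)_\infty)$ to produce a unital equivariant $*$-homomorphism
\[
\phi:(\CD\otimes\CD,\gamma\otimes\gamma)\to\big(F_{\gamma\otimes\alpha}(\eins_\CD\otimes A,(\CD\otimes A)_\infty),(\gamma\otimes\alpha)^\sim_\infty\big)
\]
with $\phi(d\otimes\eins_\CD)\cdot(\eins_\CD\otimes a)=d\otimes a$ and $\phi(\eins_\CD\otimes d)\cdot(\eins_\CD\otimes a)\in\eins_\CD\otimes A_\infty$. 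Since $G$ is compact, the only compact subset that matters is $K=G$.

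Next I would fix $\eps>0$ and contractions $F_\CD\fin\CD$, $F_A\fin A$. Because $\gamma$ is unitarily regular and strongly self-absorbing, Theorem \ref{thm:saGi-hf} provides a unitary path $v:[0,1]\to\CU(\CD\otimes\CD)$ with $v_0=\eins$, $\max_{0\leq t\leq 1}\max_{g\in G}\|v_t-(\gamma\otimes\gamma)_g(v_t)\|\leq\eps$, and $v_1^*(d\otimes\eins_\CD)v_1=_\eps\eins_\CD\otimes d$ for $d\in F_\CD$. Pushing this through $\phi$, the unitary path $u_t=\phi(v_t)$ in $F_{\gamma\otimes\alpha}(\eins_\CD\otimes A,(\CD\otimes A)_\infty)$ satisfies $\max_{0\leq t\leq 1}\max_{g\in G}\|u_t-(\gamma\otimes\alpha)^\sim_{\infty,g}(u_t)\|\leq\eps$ (by equivariance of $\phi$, exactly as in \eqref{eq2:fixed}) and $u_1^*(d\otimes a)u_1=_\eps\phi(\eins_\CD\otimes d)\cdot(\eins_\CD\otimes a)\in\eins_\CD\otimes A_\infty$ for all contractions $a\in A$ and $d\in F_\CD$. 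Up to this point the argument is word-for-word that of Theorem \ref{thm:strong-absorption}.

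The one genuinely new point — and the step I expect to be the main obstacle — is the lifting of $u$ through $\pi$: here one needs representing unitary paths $z^{(n)}:[0,1]\to\CU\Big(\big((\CD\otimes A)^\sim\big)^{\gamma\otimes\alpha}\Big)$ that take values \emph{inside} the fixed-point algebra and start at $\eins$, whereas the unitary lifting theorem \cite[5.1]{Blackadar15} used in Theorem \ref{thm:strong-absorption} only yields lifts that are approximately $G$-fixed, as quantified in \eqref{eq:u-fixed}. This is precisely the ``gadget'' alluded to in the Remark before Lemma \ref{one-side-compact}: since all the relevant unitaries originate in the $\CD$-coordinate via the half-flip path $v$, unitary regularity of $\gamma$ is exactly what lets one replace an approximately $G$-invariant unitary (path) in the quotient by a genuinely $G$-invariant lift, at the cost of a controlled enlargement of $\eps$ (per the Remark, this is also why the embedding ends up \emph{strongly asymptotically} $G$-unitarily equivalent to an isomorphism rather than merely conjugate). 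Granting this, one obtains for suitable $n$ a unitary path $z^{(n)}:[0,1]\to\CU\Big(\big((\CD\otimes A)^\sim\big)^{\gamma\otimes\alpha}\Big)$ with $z^{(n)}_0=\eins$, $\max_{0\leq t\leq 1}\|[z^{(n)}_t,\eins_\CD\otimes a]\|\leq\eps$ for $a\in F_A$, and $\dist\big(z^{(n)*}_1(d\otimes a)z^{(n)}_1,\eins_\CD\otimes A\big)\leq 2\eps$ for $d\in F_\CD$ and $a\in F_A$, just as in the proof of Theorem \ref{thm:strong-absorption}. Since $\eps$, $F_\CD$, $F_A$ were arbitrary and the elementary tensors span a dense subspace of $\CD\otimes A$, these are the hypotheses of Lemma \ref{one-side-compact} for the equivariant embedding $\eins_\CD\otimes\id_A:(A,\alpha)\to(\CD\otimes A,\gamma\otimes\alpha)$; applying that lemma shows the embedding is strongly asymptotically $G$-unitarily equivalent to an isomorphism, and composing with the flip isomorphism $\CD\otimes A\cong A\otimes\CD$ gives $\alpha\cong\alpha\otimes\gamma$.
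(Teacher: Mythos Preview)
Your overall strategy is correct, but there is a genuine gap at precisely the point you flag as ``the main obstacle'': you never actually produce the lift into the fixed-point algebra, and the mechanism you propose for it is not the right one. You suggest that unitary regularity is what lets one ``replace an approximately $G$-invariant unitary (path) in the quotient by a genuinely $G$-invariant lift''. This is not how it works, and unitary regularity alone does not perform that task. The paper's argument makes two moves you are missing.

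First, rather than settling for an $\eps$-approximately invariant half-flip path $v$ in $\CD\otimes\CD$ and then trying to repair this later, one uses compactness of $G$ together with the strongly asymptotically $G$-inner half-flip (this is where unitary regularity enters, via Theorem~\ref{thm:saGi-hf}) to obtain, from the outset, a path $v:[0,1]\to\CU\big((\CD\otimes\CD)^{\gamma\otimes\gamma}\big)$ with $v_0=\eins$ lying \emph{genuinely} in the fixed-point algebra; this is the content of Definition~\ref{Gue}\ref{Gue:2}+(iii) (averaging over the compact group). Consequently $u_t=\phi(v_t)$ already lies in $F(\eins_\CD\otimes A,(\CD\otimes A)_\infty)^{(\gamma\otimes\alpha)^\sim}$, not merely approximately.

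Second, the lifting step is then a lift of a genuinely $G$-invariant path through $\pi$, and for this one needs that $\pi$ restricted to fixed-point algebras is still surjective. This is established separately: by \cite[3.7]{Szabo16ssa2} the map $\pi$ is surjective on the continuous parts, and since $G$ is compact, \cite[3.9]{BarlakSzabo16ss} guarantees it remains surjective after passing to fixed-point algebras. One then applies the unitary lifting theorem \cite[5.1]{Blackadar15} to this restricted surjection to get $z^{(n)}:[0,1]\to\CU\big(((\CD\otimes A)^{\gamma\otimes\alpha})^\sim\big)$. Your write-up contains neither of these ingredients; ``Granting this'' papers over exactly the substance of the proof.
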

\begin{proof}
Proceed exactly as in the proof of \ref{thm:strong-absorption} until choosing the $*$-homomor\-phism $\phi$. We make the additional observation that by \cite[3.7]{Szabo16ssa2}, the canonical projection
\[
\pi: \big( (\CD\otimes A)^\sim \big)_\infty\cap (\eins_\CD\otimes A)' \to F(\eins_\CD\otimes A,(\CD\otimes A)_\infty)
\]
restricts to an equivariant, surjective $*$-homomorphism on the continuous parts
\[
\pi: \big( (\CD\otimes A)^\sim \big)_{\infty,\gamma\otimes\alpha}\cap (\eins_\CD\otimes A)' \to F_{\gamma\otimes\alpha}(\eins_\CD\otimes A,(\CD\otimes A)_\infty).
\]
As $G$ is compact, this implies that $\pi$ also becomes surjective after restricting it to the fixed-point algebras (see \cite[3.9]{BarlakSzabo16ss})
\begin{equation} \label{pi-restrict}
\pi: \big( (\CD\otimes A)^{\gamma\otimes\alpha,\sim} \big)_\infty\cap (\eins_\CD\otimes A)' \to F(\eins_\CD\otimes A,(\CD\otimes A)_\infty)^{(\gamma\otimes\alpha)^\sim}
\end{equation}
Now let $\eps>0$, $F_\CD\fin\CD$ and $F_A\fin A$ be given.
As $\gamma$ is strongly self-absorbing and unitarily regular, it has strongly asymptotically $G$-inner half-flip.
As $G$ is additionally compact, we can find a path of unitaries $v: [0,1]\to\CU\big( (\CD\otimes\CD)^{\gamma\otimes\gamma} \big)$ with $v_0=\eins$ and $v_1(d\otimes\eins_\CD)v_1^*=_\eps\eins_\CD\otimes d$ for all $d\in F_\CD$.
Then $u_t=\phi(v_t)$ yields a unitary path $u: [0,1]\to \CU\big( F(\eins_\CD\otimes A,(\CD\otimes A)_\infty)^{(\gamma\otimes\alpha)^\sim} \big)$ such that $u_1^*(d\otimes a)u_1$ has distance at most $\eps$ from $\eins_\CD\otimes A_\infty$ for all $d\in F_\CD$ and all $a\in A$ with $\|a\|\leq 1$.

Then this path lifts under the above restriction \eqref{pi-restrict} of $\pi$ by the unitary lifting theorem \cite[5.1]{Blackadar15}. That is, there exists a sequence of unitary paths $z^{(n)}: [0,1] \to \CU\big( (\CD\otimes A)^{\gamma\otimes\alpha,\sim} \big)$ representing $u$. But then for sufficiently large $n$, we necessarily have
\[
\max_{0\leq t\leq 1}~\|[z^{(n)}_t, \eins_\CD\otimes a]\|\leq\eps
\]
and
\[
\dist( z^{(n)*}_1(d\otimes a)z^{(n)}_1, \eins_\CD\otimes A)\leq 2\eps
\]
for all $d\in F_\CD$ and $a\in F_A$.

As $\eps, F_\CD, F_A$ were arbitrary, the assertion follows from \ref{one-side-compact}.
\end{proof}


\section{Reduction to subgroups}

In this section, we will study certain behavior of group actions in the case where the acting group arises as a union of open subgroups.

\begin{nota}
Let $G$ be a topological group with a distinguished subgroup $H\subset G$. Let $A$ be a \cstar-algebra and $(\alpha,u): G\curvearrowright A$ a cocycle action. Then we write $(\alpha,u)|_H: H\curvearrowright A$ for the cocycle $H$-action on $A$ that arises by restriction.
Let $B$ be another \cstar-algebra with a cocycle action $(\beta,w): G\curvearrowright B$, and let $\phi: (A,\alpha,u)\to (B,\beta,w)$ be a nondegenerate, equivariant $*$-homomorphism. Then we write $\phi|_H: (A,\alpha,u)|_H \to (B,\beta,w)|_H$ for the equivariant $*$-homomorphism between the restricted dynamical systems. (This is equal to $\phi$ as a map, but is viewed as an arrow in a different category.) For genuine actions, we will omit the $2$-cocycles in this notation.
\end{nota}

\begin{lemma} \label{subgroup:uniqueness}
Let $G$ be a second-countable, locally compact group, $A$ and $B$ two \cstar-algebras and $\alpha: G\curvearrowright A$ and $\beta: G\curvearrowright B$ two actions. Let $\phi_1, \phi_2: (A,\alpha)\to (B,\beta)$ be two  equivariant $*$-homomorphisms. Let $\set{G_n}_{n\in\IN}$ be an increasing family of open subgroups of $G$ such that $G=\bigcup_{n\in\IN} G_n$. Suppose that
\[
\phi_1|_{G_n} \ue{G_n} \phi_2|_{G_n} \quad\text{for all } n\in\IN.
\]
Then $\phi_1\ue{G}\phi_2$.
\end{lemma}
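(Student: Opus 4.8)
The statement is about approximate $G$-unitary equivalence, which by Definition \ref{Gue}\ref{Gue:1} is a local condition: given $\eps>0$, a finite set $F\fin A$ and a compact set $K\subset G$, I must produce a single unitary $v\in\CU(\tilde B^\beta_{\eps,K})$ with $\|\phi_2(x)-v\phi_1(x)v^*\|\le\eps$ for all $x\in F$. The key observation is that since the $G_n$ are open and increasing and exhaust $G$, the compact set $K$ is covered by finitely many cosets of the $G_n$'s; but more usefully, I claim $K$ is actually \emph{contained} in a single $G_n$. Indeed, each $G_n$ is open, hence closed (being a subgroup, its complement is a union of cosets, each open), so $\{G_n\}$ is an open cover of the compact set $K$, and by compactness finitely many of them cover $K$; since the family is increasing, $K\subset G_N$ for some $N$.

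Once $K\subset G_N$, the proof is immediate: apply the hypothesis $\phi_1|_{G_N}\ue{G_N}\phi_2|_{G_N}$ with the same data $\eps$, $F$, and the compact set $K\subset G_N$. This yields a unitary $v\in\CU(\tilde B^\beta_{\eps,K})$ — note that the fixed-point-type condition "$\|\beta_g(v)-v\|\le\eps$ for all $g\in K$" is literally the same whether one regards $\beta$ as a $G$-action or a $G_N$-action, since $K\subset G_N$ — such that $\|\phi_2(x)-v\phi_1(x)v^*\|\le\eps$ for all $x\in F$. This $v$ witnesses exactly the defining condition for $\phi_1\ue{G}\phi_2$.

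\begin{bew}
Let $\eps>0$, a finite set $F\fin A$ and a compact set $K\subset G$ be given. Each subgroup $G_n$ is open in $G$, and since its complement $G\setminus G_n$ is a union of (open) cosets of $G_n$, each $G_n$ is also closed. Thus $\set{G_n}_{n\in\IN}$ is an open cover of the compact set $K$, and as the family is increasing, there is some $N\in\IN$ with $K\subset G_N$.

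Applying the hypothesis $\phi_1|_{G_N} \ue{G_N} \phi_2|_{G_N}$ to the data $\eps$, $F$ and the compact set $K\subset G_N$, we obtain a unitary $v\in\CU\big(\tilde{B}^\beta_{\eps,K}\big)$ with $\|\phi_2(x)-v\phi_1(x)v^*\|\leq\eps$ for all $x\in F$. Here we have used that the set $\tilde{B}^\beta_{\eps,K}$ depends only on the restriction of $\beta$ to $K$, hence is the same whether $\beta$ is viewed as a $G$-action or a $G_N$-action. Since $\eps$, $F$ and $K$ were arbitrary, this shows $\phi_1\ue{G}\phi_2$.
\end{bew}

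**Main obstacle.** There is essentially no serious obstacle; the only point requiring care is the topological fact that an open subgroup is automatically closed, so that compactness of $K$ forces $K$ into a single $G_n$. Everything else is unwinding definitions.
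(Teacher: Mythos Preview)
Your proof is correct and follows essentially the same approach as the paper's own proof: pick $N$ with $K\subset G_N$ by compactness and openness of the $G_n$, then apply the $G_N$-hypothesis and note $\CU\big(\tilde{B}^{\beta|_{G_N}}_{\eps,K}\big)=\CU\big(\tilde{B}^{\beta}_{\eps,K}\big)$. Your added remark that open subgroups are automatically closed makes the compactness step slightly more explicit than the paper, but the argument is the same.
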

\begin{proof}
Let $\eps>0$, $F\fin A$ a finite subset and $K\subset G$ a compact subset. Since the increasing subgroups $G_n$ are open, it follows by compactness that there exists some $N\in\IN$ with $K\subset G_N$. As $\phi_1|_{G_N} \ue{G_N} \phi_2|_{G_N}$ by assumption, we find some
\[
v ~\in~ \CU\big( \tilde{B}^{\beta|_{G_N}}_{\eps,K} \big) ~=~ \CU\big( \tilde{B}^{\beta}_{\eps,K} \big)
\]
with $\|\phi_2(x)-v\phi_1(x)v^*\|\leq\eps$ for all $x\in F$. This finishes the proof.
\end{proof}

\begin{cor} \label{cor:half-flip-subgroup}
Let $G$ be a second-countable, locally compact group. Let $\CD$ be a separable, unital \cstar-algebra and $\gamma: G\curvearrowright\CD$ an action. Let $\set{G_n}_{n\in\IN}$ be an increasing family of open subgroups of $G$ such that $G=\bigcup_{n\in\IN} G_n$. Then $\gamma$ has approximately $G$-inner half-flip if and only if $\gamma|_{G_n}$ has approximately $G$-inner half-flip for every $n\in\IN$.
\end{cor}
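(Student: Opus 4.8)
The plan is to deduce both implications directly from the behaviour of approximate $G$-unitary equivalence under restriction to, and amalgamation over, the open subgroups $G_n$. Recall that $\gamma$ has approximately $G$-inner half-flip precisely when the two equivariant first-factor embeddings
\[
\id_\CD\otimes\eins_\CD,\ \eins_\CD\otimes\id_\CD: (\CD,\gamma)\to(\CD\otimes\CD,\gamma\otimes\gamma)
\]
are approximately $G$-unitarily equivalent in the sense of Definition \ref{Gue}\ref{Gue:1}, and analogously for each $G_n$ with the actions $\gamma|_{G_n}$ and $(\gamma\otimes\gamma)|_{G_n}$.

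For the forward implication, I would fix $n$ and observe that for $\eps>0$, a finite set $F\fin\CD$ and a compact set $K\subset G_n$, the set $K$ is also compact in $G$, and the closed set $(\CD\otimes\CD)^{\gamma\otimes\gamma}_{\eps,K}$ from Notation \ref{1-unitaries} depends only on the elements $g\in K\subset G_n$; hence it coincides with the corresponding set formed with respect to $(\gamma\otimes\gamma)|_{G_n}$. Therefore any unitary witnessing $\id_\CD\otimes\eins_\CD \ue{G}\eins_\CD\otimes\id_\CD$ at the parameters $(\eps,F,K)$ simultaneously witnesses the corresponding statement for $\gamma|_{G_n}$, so $\gamma|_{G_n}$ has approximately $G_n$-inner half-flip.

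For the converse, I would apply Lemma \ref{subgroup:uniqueness} with $A=\CD$, $\alpha=\gamma$, $B=\CD\otimes\CD$, $\beta=\gamma\otimes\gamma$, and $\phi_1=\id_\CD\otimes\eins_\CD$, $\phi_2=\eins_\CD\otimes\id_\CD$; both are equivariant $*$-homomorphisms into $(\CD\otimes\CD,\gamma\otimes\gamma)$. The hypothesis that each $\gamma|_{G_n}$ has approximately $G_n$-inner half-flip says exactly that $\phi_1|_{G_n}\ue{G_n}\phi_2|_{G_n}$ for all $n$, so the lemma yields $\phi_1\ue{G}\phi_2$, which is the assertion that $\gamma$ has approximately $G$-inner half-flip.

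There is no real obstacle here: the corollary is an immediate consequence of Lemma \ref{subgroup:uniqueness}. The only point requiring (minor) care is the bookkeeping remark that the ``$\eps$-$K$-fixed'' subalgebras used to define $\ue{G}$ are insensitive to whether one records the action of all of $G$ or only of an open subgroup containing $K$ --- this is precisely what legitimises both the restriction argument in the forward direction and the invocation of the lemma in the converse.
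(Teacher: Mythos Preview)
Your proof is correct and matches the paper's intended approach: the corollary is stated immediately after Lemma~\ref{subgroup:uniqueness} without proof precisely because it is the direct specialization of that lemma to the two factor embeddings $\id_\CD\otimes\eins_\CD$ and $\eins_\CD\otimes\id_\CD$, together with the trivial observation that restriction to an open subgroup preserves approximate $G$-unitary equivalence. Your bookkeeping remark about the $(\eps,K)$-fixed sets is exactly the point, and there is nothing to add.
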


\begin{rem} \label{rmk:cont-reps}
Let $G$ be a second-countable, locally compact group. Let $A$ be a separable \cstar-algebra and $\alpha: G\curvearrowright A$ an action. By the results in \cite[Section 3]{Szabo16ssa2}, we have a natural identification
\[
F_{\infty,\alpha}(A) = (A_{\infty,\alpha}\cap A')/\ann(A,A_{\infty,\alpha}).
\]
Moreover, the ideal $\ann(A,A_{\infty,\alpha})$ is a $G$-$\sigma$-ideal in $A_{\infty,\alpha}\cap A'$, which implies that the quotient map from $A_{\infty,\alpha}\cap A'$ to $F_{\infty,\alpha}(A)$ is strongly locally semi-split; see \cite[3.5, 3.6]{Szabo16ssa2}. 
Denote by $\ell^\infty_\alpha(\IN,A)\subset\ell^\infty(\IN,A)$ the \cstar-algebra containing those bounded sequences $(x_n)_n$ for which the map $[g\mapsto (\alpha_g(x_n))_n ]$ is norm-continuous. By a result of Brown \cite[2.1]{Brown00}, we have
\[
A_{\infty,\alpha} = \ell_\alpha^\infty(\IN,A)/c_0(\IN,A).
\]
\end{rem}

The following is nothing more than a fairly routine reindexation argument.

\begin{lemma} \label{lemma:F(A)-subgroup}
Let $G$ be a second-countable, locally compact group. Let $A$ be a separable \cstar-algebra with a cocycle action $(\alpha,u): G\curvearrowright A$. Let $\CD$ be a separable, unital \cstar-algebra and $\gamma: G\curvearrowright\CD$ an action.
Let $\set{G_n}_{n\in\IN}$ be an increasing family of open subgroups of $G$ such that $G=\bigcup_{n\in\IN} G_n$. Suppose that for every $n\in\IN$, there exists a unital and equivariant $*$-homomorphism from $(\CD,\gamma|_{G_n})$ to $\big(F_{\infty,\alpha|_{G_n}}(A),\tilde{\alpha}_\infty|_{G_n}\big)$.
Then there exists a unital and equivariant $*$-homomorphism from $(\CD,\gamma)$ to $\big(F_{\infty,\alpha}(A),\tilde{\alpha}_\infty\big)$.
\end{lemma}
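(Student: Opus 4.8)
The strategy is a standard reindexation (diagonal sequence) argument, analogous to those used throughout \cite{Szabo16ssa, Szabo16ssa2}. For each $n$, let $\psi_n: (\CD,\gamma|_{G_n}) \to (F_{\infty,\alpha|_{G_n}}(A), \tilde\alpha_\infty|_{G_n})$ be the given unital equivariant $*$-homomorphism. Using Remark \ref{rmk:cont-reps}, identify $F_{\infty,\alpha|_{G_n}}(A) = (A_{\infty,\alpha|_{G_n}} \cap A')/\ann(A, A_{\infty,\alpha|_{G_n}})$, and recall that the quotient map is strongly locally semi-split. I would fix a countable dense subset $\{d_k\}_{k\in\IN}$ of $\CD$ together with a generating subset containing the unit, and fix increasing finite subsets $F_k \fin A$ with dense union and increasing compact sets $K_k \subset G$ with $\bigcup K_k = G$. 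Since the $G_n$ are open and increasing with union $G$, each compact $K_k$ is eventually contained in some $G_{n(k)}$; I would arrange $n(k) \to \infty$.

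The core step is: for each $k$, lift the restriction of $\psi_{n(k)}$ to a sufficiently accurate approximate multiplicative, approximately central, approximately equivariant map on the finite data $(\{d_j\}_{j\le k}, F_k, K_k)$ by a finite tuple in $A_{\infty,\alpha|_{G_{n(k)}}} \cap A'$, and further lift this to a genuine bounded sequence in $\ell^\infty_{\alpha|_{G_{n(k)}}}(\IN,A)$ using Brown's identification $A_{\infty,\alpha} = \ell^\infty_\alpha(\IN,A)/c_0(\IN,A)$. Passing to a diagonal sequence over $k$, one obtains, for each generator $d$ of $\CD$, a bounded sequence $(a_m(d))_m$ in $A$ whose class in $A_\infty$ is central modulo $\ann(A,A_\infty)$, is asymptotically multiplicative and linear in $d$, is a unital map, and — crucially — is $\alpha_\infty$-continuous because the equivariance estimates were taken over $K_k$ with $K_k$ exhausting $G$ and the continuity in the sequence algebra over each $G_{n(k)}$ is controlled on $K_k$. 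This produces a unital equivariant $*$-homomorphism $(\CD,\gamma) \to (F_{\infty,\alpha}(A), \tilde\alpha_\infty)$.

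The main obstacle — and the only point requiring genuine care rather than bookkeeping — is ensuring \emph{continuity with respect to the full group $G$} of the resulting map into $F_{\infty,\alpha}(A)$, as opposed to merely continuity on each $G_n$ separately. The subtlety is that a naive diagonal sequence extracted from maps that are only $G_n$-continuous need not be $G$-continuous, since the modulus of continuity could degrade as $n \to \infty$. This is handled precisely as in Lemma \ref{subgroup:uniqueness}: for a fixed compact $K \subset G$ one has $K \subset G_N$ for some $N$, and once $n(k) \ge N$ the equivariance estimates at stage $k$ control the behaviour on all of $K$; since $n(k) \to \infty$, every compact subset of $G$ is eventually covered, so the diagonal sequence lands in the continuous part $A_{\infty,\alpha}$ and its induced action on the image is continuous. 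The remaining verifications — that the $\ann$-corrections and the strong local semi-splitness let one promote "approximate" data at each finite stage to exact relations in the limit — are entirely routine and identical to the corresponding arguments in \cite[Section 3]{Szabo16ssa2}.
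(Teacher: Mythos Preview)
Your proposal is correct and follows essentially the same reindexation argument as the paper: lift each $\psi_n$ through the strongly locally semi-split quotient and Brown's identification to representing sequences in $A$, then diagonalize over an exhaustion by compact sets $K_k$ (each eventually contained in some $G_{n(k)}$) to land in the $G$-continuous part. The only step the paper adds that you do not mention is a preliminary reduction to genuine actions via the Packer--Raeburn stabilization trick (tensoring with $\CK(\ell^2(\IN)\bar\otimes L^2(G))$ carrying the action induced by the left-regular representation), which leaves $F_{\infty,\alpha}(A)$ unchanged and makes Remark~\ref{rmk:cont-reps} directly applicable; this is a routine technicality rather than a substantive difference.
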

\begin{proof}
First let us observe that it suffices to consider only the case of genuine actions. Consider the Hilbert space $\CH=\ell^2(\IN)\bar{\otimes} L^2(G)$ and let $\delta: G\curvearrowright\CK(\CH)$ be the unitarily implemented action that is induced by the left-regular representation of $G$ on $L^2(G)$.
By \cite[1.10]{Szabo16ssa} and \cite[1.5]{BarlakSzabo16ss}, the dynamical system on the central sequence algebra $(F_{\infty,\alpha}(A),\tilde{\alpha}_\infty)$ does invariant under cocycle conjugacy or stabilization with the compacts. 
Thus we may without loss of generality replace $(\alpha,u)$ by $(\alpha\otimes\delta,u\otimes\eins)$ and show the claim in this case.
By the Packer-Raeburn stabilization trick from \cite[3.4]{PackerRaeburn89}, the 2-cocycle $u\otimes\eins$ is a coboundary, and so we may assume that $\alpha$ is a genuine action.

Let $\eps_n>0$ be a decreasing null sequence, and let $F_n^A\fin A$ be increasing finite subsets with dense union. Let $G'\subset G$ be a countable, dense subgroup. Let $D\subset\CD$ be a countable, dense, $\gamma|_{G'}$-invariant $\IQ[i]$-$*$-subalgebra. Let $F_n^D\fin D$ be increasing finite subsets with $D=\bigcup_{n\in\IN} F_n$, and let $K_n\subset G$ be an increasing sequence of compact sets with $G=\bigcup_{n\in\IN} K_n$. 

Fix some $n\in\IN$. Then, as the subgroups $G_k\subset G$ are open, there exists some $N\in\IN$ with $K_n\subset G_N$. Using \ref{rmk:cont-reps}, we find a commutative diagram 
\[
\xymatrix{
 && \ell^\infty_{\alpha|_{G_N}}(\IN,A) \ar[d] \\
 && \big( A_{\infty,\alpha|_{G_N}}\cap A' , \alpha_\infty|_{G_N} \big) \ar[d] \\
(\CD,\gamma|_{G_N}) \ar[uurr]^{\kappa=(\kappa_l)_l} \ar[urr]^{\psi} \ar[rr]^{\psi_0} && \big(F_{\infty,\alpha|_{G_N}}(A),\tilde{\alpha}_\infty|_{G_N}\big)
}
\]
where $\psi_0$ is a $G_N$-equivariant $*$-homomorphism, $\psi$ is a $G_N$-equivariant c.p.c.\ order zero map and $\kappa$ is a (not necessarily equivariant) linear map.

As $\kappa$ is a lift for both $\psi$ and $\psi_0$, we observe the following properties for all $x,y\in\CD$, $a\in A$ and compact sets $K\subset G_N$:
\begin{itemize}
\item $\dst\limsup_{l\to\infty} \|\kappa_l(x)\|\leq\|x\|$;
\item $\dst\lim_{l\to\infty} \|[\kappa_l(x),a]\|=0$;
\item $\dst\lim_{l\to\infty} \|\kappa_l(\eins)a-a\|=0$;
\item $\dst\lim_{l\to\infty} \|\kappa_l(xy)\kappa_l(\eins)-\kappa_l(x)\kappa_l(y)\|=0$;
\item $\dst\lim_{l\to\infty}~ \max_{g\in K}~ \|(\alpha_g\circ\kappa_l)(x)-(\kappa_l\circ\gamma_g)(x)\|=0$.
\end{itemize}
In particular, we find $l(n)\in\IN$ such that the following are satisfied for all $x,y\in F_n^D$ and $a\in F_n^A$:
\begin{itemize}
\item $\|\kappa_{l(n)}(x)\|\leq\|x\|+\eps_n$;
\item $\|[\kappa_{l(n)}(x),a]\|\leq\eps_n$;
\item $\|\kappa_{l(n)}(\eins)a-a\|\leq\eps_n$;
\item $\|\kappa_{l(n)}(xy)\kappa_{l(n)}(\eins)-\kappa_{l(n)}(x)\kappa_{l(n)}(y)\|\leq\eps_n$;
\item $\dst \max_{g\in K_n}~ \|(\alpha_g\circ\kappa_{l(n)})(x)-(\kappa_{l(n)}\circ\gamma_g)(x)\|\leq\eps_n$.
\end{itemize}
By these properties, the $\IQ[i]$-$*$-linear map $\phi=[(\kappa_{l(n)})_n]: D\to A_{\infty,\alpha}$ is well-defined, contractive and satisfies
\begin{itemize}
\item $[\phi(x),a]=0$ for all $x\in D$ and $a\in A$;
\item $\phi(\eins)a=a$ for all $a\in A$;
\item $\phi(xy)\phi(\eins)=\phi(x)\phi(y)$ for all $x,y\in D$;
\item $\alpha_g\circ\phi = \phi\circ\gamma_g$ for all $g\in G'$.
\end{itemize}
Thus this map extends continuously to an equivariant c.p.c.\ order zero map $\phi: (\CD,\gamma)\to (A_{\infty,\alpha}\cap A',\alpha_{\infty})$ with $\phi(\eins)a=a$ for all $a\in A$. Then $\phi_0=\phi+\ann(A,A_{\infty,\alpha})$ yields the desired equivariant and unital $*$-homomorphism from $(\CD,\gamma)$ to $\big(F_{\infty,\alpha}(A),\tilde{\alpha}_\infty\big)$.
\end{proof}

\begin{theorem} \label{thm:reduction-subgroups}
Let $G$ be a second-countable, locally compact group. Let $A$ be a separable \cstar-algebra with a cocycle action $(\alpha,u): G\curvearrowright A$. Let $\CD$ be a separable, unital \cstar-algebra and $\gamma: G\curvearrowright\CD$ an action. Let $\set{G_n}_{n\in\IN}$ be an increasing family of open subgroups of $G$ such that $G=\bigcup_{n\in\IN} G_n$.
\begin{enumerate}[label=\textup{(\roman*)},leftmargin=*] 
\item The action $\gamma$ is semi-strongly self-absorbing if and only if for every $n\in\IN$, the restriction $\gamma|_{G_n}$ is semi-strongly self-absorbing. \label{reduction-subgroups:1}
\item Suppose that $\gamma$ is semi-strongly self-absorbing. Then $(\alpha,u)\cc(\alpha\otimes\gamma,u\otimes\eins_\CD)$ if and only if for every $n\in\IN$, one has $(\alpha, u)|_{G_n}\cc (\alpha\otimes\gamma, u\otimes\eins_\CD)|_{G_n}$. \label{reduction-subgroups:2}
\item Suppose that $\gamma$ is semi-strongly self-absorbing, and that $\beta: G\curvearrowright\CD$ is another action. Then $\beta\scc\gamma$ if and only if for every $n\in\IN$, one has $\beta|_{G_n}\scc\gamma|_{G_n}$. \label{reduction-subgroups:3}
\end{enumerate}
\end{theorem}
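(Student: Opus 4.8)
The plan is to reduce each of the six implications to results already at hand: the reindexation Lemma~\ref{lemma:F(A)-subgroup}, the equivariant McDuff theorem~\ref{equi-McDuff}, the characterization~\ref{thm:sssa} of semi-strong self-absorption, and the half-flip reduction Corollary~\ref{cor:half-flip-subgroup}. The ``only if'' directions of all three parts will be immediate: since each $G_n$ is open in $G$, every compact subset of $G_n$ is already compact in $G$, so a $\gamma$-cocycle restricts to a $\gamma|_{G_n}$-cocycle, an approximate (respectively asymptotic) coboundary restricts to one over $G_n$, the sets $\tilde{B}^{\beta}_{\eps,K}$ with $K\subset G_n$ depend only on $\beta|_{G_n}$, and each of the properties ``strongly self-absorbing'', ``semi-strongly self-absorbing'', ``$\gamma$-absorbing'' and ``approximately $G$-inner half-flip'' passes verbatim to restrictions. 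No further argument is needed there, so it remains to treat the ``if'' directions.

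For the ``if'' direction of \ref{reduction-subgroups:1}, I would start from the fact that Theorem~\ref{thm:sssa} provides, for every $n$, that $\gamma|_{G_n}$ has approximately $G_n$-inner half-flip and admits a unital equivariant $*$-homomorphism $(\CD,\gamma|_{G_n})\to\big(F_{\infty,\gamma|_{G_n}}(\CD),\tilde{\gamma}_\infty|_{G_n}\big)$ (here $F_{\infty}(\CD)=\CD_\infty\cap\CD'$ as $\CD$ is unital). Corollary~\ref{cor:half-flip-subgroup} upgrades these half-flips to an approximately $G$-inner half-flip for $\gamma$, and Lemma~\ref{lemma:F(A)-subgroup} (with $A=\CD$, $\alpha=\gamma$) assembles the maps above into a unital equivariant $*$-homomorphism $(\CD,\gamma)\to\big(F_{\infty,\gamma}(\CD),\tilde{\gamma}_\infty\big)$; Theorem~\ref{thm:sssa} then yields that $\gamma$ is semi-strongly self-absorbing. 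The ``if'' direction of \ref{reduction-subgroups:2} follows the same pattern one level up: by \ref{reduction-subgroups:1} each $\gamma|_{G_n}$ is semi-strongly self-absorbing, so Theorem~\ref{equi-McDuff} translates the hypothesis $(\alpha,u)|_{G_n}\cc(\alpha\otimes\gamma,u\otimes\eins_\CD)|_{G_n}$ into a unital equivariant $*$-homomorphism $(\CD,\gamma|_{G_n})\to\big(F_{\infty,\alpha|_{G_n}}(A),\tilde{\alpha}_\infty|_{G_n}\big)$; Lemma~\ref{lemma:F(A)-subgroup} combines these into one over $G$, and Theorem~\ref{equi-McDuff} reads off $(\alpha,u)\scc(\alpha\otimes\gamma,u\otimes\eins_\CD)$, hence in particular $(\alpha,u)\cc(\alpha\otimes\gamma,u\otimes\eins_\CD)$.

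The ``if'' direction of \ref{reduction-subgroups:3} needs a little more care. Assuming $\beta|_{G_n}\scc\gamma|_{G_n}$ for all $n$: since each $\gamma|_{G_n}$ is semi-strongly self-absorbing by \ref{reduction-subgroups:1} and this property is preserved under strong cocycle conjugacy, each $\beta|_{G_n}$ is semi-strongly self-absorbing, whence $\beta$ is, again by \ref{reduction-subgroups:1}. Next I would use that a semi-strongly self-absorbing action absorbs itself, i.e.\ $\gamma|_{G_n}\scc\gamma|_{G_n}\otimes\gamma|_{G_n}$ --- immediate from Theorems~\ref{thm:sssa} and~\ref{equi-McDuff} --- together with the standard stability of $\scc$ under tensoring with $\id_\CD$ and its symmetry, to get $\beta|_{G_n}\scc\gamma|_{G_n}\scc\gamma|_{G_n}\otimes\gamma|_{G_n}\scc\beta|_{G_n}\otimes\gamma|_{G_n}$ and symmetrically $\gamma|_{G_n}\scc\gamma|_{G_n}\otimes\beta|_{G_n}$; that is, $\beta|_{G_n}$ is $\gamma|_{G_n}$-absorbing and $\gamma|_{G_n}$ is $\beta|_{G_n}$-absorbing. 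Then I would run the machinery of the preceding paragraph twice --- once with $(\CD,\gamma)$ in the semi-strongly-self-absorbing slot of Theorem~\ref{equi-McDuff} and $(\CD,\beta)$ in the other, once with the roles reversed, both legitimate since $\beta$ and $\gamma$ are semi-strongly self-absorbing --- to obtain $\beta\scc\beta\otimes\gamma$ and $\gamma\scc\gamma\otimes\beta$. Since the flip $\CD\otimes\CD\to\CD\otimes\CD$ is an equivariant isomorphism from $\gamma\otimes\beta$ to $\beta\otimes\gamma$, this gives $\beta\scc\beta\otimes\gamma\cong\gamma\otimes\beta\scc\gamma$, and transitivity of $\scc$ together with the fact that conjugacy implies strong cocycle conjugacy yields $\beta\scc\gamma$.

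I expect the only real friction to be bookkeeping in part~\ref{reduction-subgroups:3}: keeping track of which action occupies the semi-strongly-self-absorbing slot of Theorem~\ref{equi-McDuff} at each invocation, and citing the routine permanence properties of $\scc$ (invariance under conjugacy, symmetry, transitivity, stability under tensoring with an identity map) used in the final chain. There is no analytic obstacle beyond what is already contained in Lemma~\ref{lemma:F(A)-subgroup}.
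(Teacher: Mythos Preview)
Your proposal is correct and follows essentially the same approach as the paper: in both cases the ``only if'' directions are dismissed as trivial, parts~\ref{reduction-subgroups:1} and~\ref{reduction-subgroups:2} are obtained by combining Theorem~\ref{thm:sssa} (resp.\ Theorem~\ref{equi-McDuff}) with Corollary~\ref{cor:half-flip-subgroup} and Lemma~\ref{lemma:F(A)-subgroup}, and part~\ref{reduction-subgroups:3} is reduced to~\ref{reduction-subgroups:1} and~\ref{reduction-subgroups:2}. Your treatment of~\ref{reduction-subgroups:3} is simply a spelled-out version of what the paper compresses into one sentence (``$\beta|_{G_n}$ and $\gamma|_{G_n}$ absorb each other, so the claim follows by combining \ref{reduction-subgroups:1} and \ref{reduction-subgroups:2}''); the mutual-absorption-plus-flip chain $\beta\scc\beta\otimes\gamma\cong\gamma\otimes\beta\scc\gamma$ is exactly the intended unpacking.
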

\begin{proof}
The implication ``$\Rightarrow$'' is clear in every statement, so let us show the ``$\Leftarrow$'' implication everywhere.

\ref{reduction-subgroups:1}: Suppose that $\gamma|_{G_n}$ is semi-strongly self-absorbing for every $n\in\IN$. 
Then for every $n\in\IN$, we see by \ref{thm:sssa} that the action $\gamma|_{G_n}$ has approximately $G_n$-inner half-flip and there exists a unital and equivariant $*$-homomorphism from $(\CD,\gamma|_{G_n})$ to $(\CD_{\infty,\gamma|_{G_n}}\cap\CD',\gamma_\infty|_{G_n})$. 
By \ref{cor:half-flip-subgroup}, it follows that $\gamma$ has approximately $G$-inner half-flip. 
By \ref{lemma:F(A)-subgroup}, it follows that there exists a unital and equivariant $*$-homomorphism from $(\CD,\gamma)$ to $(\CD_{\infty,\gamma}\cap\CD',\gamma_\infty)$. Thus $\gamma$ is semi-strongly self-absorbing by \ref{thm:sssa}.

\ref{reduction-subgroups:2}: Suppose that $(\alpha,u)|_{G_n}\cc (\alpha\otimes\gamma, u\otimes\eins)|_{G_n}$ for every $n\in\IN$.
By the equivariant McDuff Theorem \ref{equi-McDuff}, this means that for every $n\in\IN$, there exists a unital and equivariant $*$-homomorphism from $(\CD,\gamma|_{G_n})$ to $\big(F_{\infty,\alpha|_{G_n}}(A),\tilde{\alpha}_\infty|_{G_n}\big)$. 
By \ref{lemma:F(A)-subgroup}, there exists a unital and equivariant $*$-homomor\-phism from $(\CD,\gamma)$ to $\big(F_{\infty,\alpha}(A),\tilde{\alpha}_\infty\big)$. Thus $(\alpha,u)\cc (\alpha\otimes\gamma,u\otimes\eins)$ by the equivariant McDuff theorem.

\ref{reduction-subgroups:3}: Suppose that $\beta|_{G_n}\scc\gamma|_{G_n}$ for every $n\in\IN$. Then in particular, for every $n\in\IN$, the action $\beta|_{G_n}$ is semi-strongly self-absorbing, and the actions $\beta|_{G_n}$ and $\gamma|_{G_n}$ absorb each other. Thus the claim follows upon combining \ref{reduction-subgroups:1} and \ref{reduction-subgroups:2}.
\end{proof}


\section{Reducing $\CZ$-stable absorption to UHF-stable absorption}

\begin{rem} \label{rmk:Zpq}
Recall that for two mutually coprime supernatural numbers $p$ and $q$, one writes
\[
Z_{p,q} = \set{ f\in\CC\big( [0,1], M_p\otimes M_q \big) \mid f(0)\in M_p\otimes\eins, f(1)\in\eins\otimes M_q }.
\]
It has been shown in \cite[Section 3]{RordamWinter10} that, if $p$ and $q$ are of infinite type, then there exists a trace-collapsing unital $*$-monomorphism $\phi: Z_{p,q}\to Z_{p,q}$ such that the stationary inductive limit $\dst\lim_{\longrightarrow} \set{Z_{p,q},\phi}$ is isomorphic to the Jiang-Su algebra $\CZ$.
\end{rem}

For proving the main result of this section, we need to recall some previous results from \cite{Szabo16ssa2}.

\begin{prop}[\see{Szabo16ssa2}{2.6}] \label{prop:commutator-ue}
Let $G$ be a second-countable, locally compact group. Let $A$ be a unital \cstar-algebra and $\alpha: G\curvearrowright A$ an action. 
Let $\CD$ be a separable, unital \cstar-algebra and $\gamma: G\curvearrowright\CD$ a semi-strongly self-absorbing action. Assume $\alpha\cc\alpha\otimes\gamma$. 
Let $\phi_1,\phi_2: (\CD,\gamma)\to (A,\alpha)$
be two unital and equivariant $*$-homomorphisms. Then there exist sequences of unitaries $u_n,v_n\in\CU(A)$ satisfying
\[
\max_{g\in K} \Big( \|u_n-\alpha_g(u_n)\|+\|v_n-\alpha_g(v_n)\| \Big) \stackrel{n\to\infty}{\longrightarrow} 0
\]
for every compact set $K\subset G$ and
\[
\ad(u_nv_nu_n^*v_n^*)\circ\phi_1 \stackrel{n\to\infty}{\longrightarrow} \phi_2
\]
in point-norm.
\end{prop}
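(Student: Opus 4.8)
The plan is to combine the equivariant McDuff theorem with a ``half-flip produces a single commutator'' trick, and then to upgrade a product of two commutators to a single one using the room available in a sufficiently large central sequence algebra. By a routine diagonal-sequence argument it suffices to establish the finitary statement: for every $\eps>0$, finite $F\fin\CD$ and compact $K\subset G$, there are $u,v\in\CU(A)$ with $\|u-\alpha_g(u)\|\leq\eps$ and $\|v-\alpha_g(v)\|\leq\eps$ for all $g\in K$, and $\|uvu^*v^*\phi_1(d)vuv^*u^*-\phi_2(d)\|\leq\eps$ for all $d\in F$.

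\smallskip

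\noindent\emph{Step 1 (setup).} Since $\alpha\cc\alpha\otimes\gamma$ and $\gamma\scc\gamma^{\otimes\infty}$ by Theorem~\ref{thm:sssa}, also $\alpha\cc\alpha\otimes\gamma^{\otimes\infty}$; as $\gamma^{\otimes\infty}$ is again semi-strongly self-absorbing, the equivariant McDuff theorem~\ref{equi-McDuff} yields a unital equivariant $*$-homomorphism $(\CD^{\otimes\infty},\gamma^{\otimes\infty})\to(F_{\infty,\alpha}(A),\tilde\alpha_\infty)$. Restricting to the coordinate embeddings $\CD\hookrightarrow\CD^{\otimes\infty}$, and using that $F_{\infty,\alpha}(A)=A_{\infty,\alpha}\cap A'$ because $A$ is unital, I obtain a sequence $\iota_1,\iota_2,\iota_3,\dots$ of pairwise commuting unital equivariant $*$-homomorphisms $\iota_j\colon(\CD,\gamma)\to(A_{\infty,\alpha}\cap A',\alpha_\infty)$. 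Since each $\iota_j(\CD)$ lies in $A'$ while $\phi_1(\CD),\phi_2(\CD)\subset A$, the ranges of $\phi_1,\phi_2,\iota_1,\iota_2,\dots$ pairwise commute, the only exception being the pair $\phi_1,\phi_2$; this exception is exactly why two commutators will be forced upon us.

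\smallskip

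\noindent\emph{Step 2 (a single commutator from the half-flip).} By Theorem~\ref{thm:sssa}, $\gamma$ has approximately $G$-inner half-flip, so for any $\eta>0$, finite $E\fin\CD$ and compact $L\subset G$ there is a unitary $w\in\CU\big((\CD\otimes\CD)^{\gamma\otimes\gamma}_{\eta,L}\big)$ with $\|w(d\otimes\eins)w^*-\eins\otimes d\|\leq\eta$ for all $d\in E$. The observation I would isolate is: if $X,Y,Z\colon(\CD,\gamma)\to(A_{\infty,\alpha},\alpha_\infty)$ are unital equivariant $*$-homomorphisms with pairwise commuting ranges, and $X\cdot Y\colon\CD\otimes\CD\to A_{\infty,\alpha}$ denotes the induced $*$-homomorphism, then the single commutator
\[
c(X,Y,Z;w):=(X\cdot Y)(w)^*\,(Y\cdot Z)(w)\,(X\cdot Y)(w)\,(Y\cdot Z)(w)^*
\]
is a commutator of two approximately $L$-$\eta$-invariant unitaries and satisfies $\|\ad(c(X,Y,Z;w))(X(d))-Z(d)\|=O(\eta)$ for $d\in E$: reading the commutator applied to $X(d)$ from the inside out, $(Y\cdot Z)(w)^*$ commutes with $X(\CD)$, then $(X\cdot Y)(w)$ conjugates $X(d)$ to within $\eta$ of $Y(d)$, then $(Y\cdot Z)(w)$ conjugates $Y(d)$ to within $\eta$ of $Z(d)$, and finally $(X\cdot Y)(w)^*$ commutes with $Z(\CD)$; note that only the controlled direction $w(d\otimes\eins)w^*\approx\eins\otimes d$ of the half-flip is used, so an approximately inner \emph{full} flip is not required. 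Applying this with $(X,Y,Z)=(\phi_1,\iota_1,\iota_2)$ and with $(X,Y,Z)=(\phi_2,\iota_3,\iota_2)$ --- both legitimate, as the $\iota_j$ commute with all of $A$ --- produces single commutators $c_1,c_2$ of approximately invariant unitaries in $A_{\infty,\alpha}$ with $\ad(c_1)\circ\phi_1\approx\iota_2$ and $\ad(c_2)\circ\phi_2\approx\iota_2$ on $F$, hence $\ad(c_2^*c_1)\circ\phi_1\approx\phi_2$ on $F$.

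\smallskip

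\noindent\emph{Step 3 (merging the two commutators; the main obstacle).} At this point $c_2^*c_1$ is a product of two commutators of approximately invariant unitaries, lying in $B:=C^*\big(\phi_1(\CD),\phi_2(\CD),\iota_1(\CD),\iota_2(\CD),\iota_3(\CD)\big)$, which commutes with the unital equivariant copy of $\CD^{\otimes\infty}$ generated by $\iota_4,\iota_5,\dots$. The hard part --- the technical heart of the matter --- is to use this extra room to replace $c_2^*c_1$, up to $\eps$, by a \emph{single} commutator $uvu^*v^*$ of approximately invariant unitaries. Here I would first note that we are free to multiply $c_2^*c_1$ on the right by any approximately invariant unitary coming from the central $\CD^{\otimes\infty}$, since such a unitary commutes with $\phi_1(\CD)$ and hence does not alter $\ad(c_2^*c_1)\circ\phi_1$; this freedom should be used to cancel the ``de la Harpe--Skandalis determinant'' type obstruction of $c_2^*c_1$. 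One then invokes the classical fact that in a strongly self-absorbing \cstar-algebra a unitary with trivial such determinant is a single commutator, carried out so that the implementing unitaries are approximately invariant by building them out of the (approximately invariant) central copy of $\CD^{\otimes\infty}$: an Eilenberg-swindle using a unital copy of $\CO_\infty$ when $\CD$ is purely infinite, and the ``$\operatorname{diag}(z,z^*)$ is a commutator'' rotation trick --- via genuine matrix units in the UHF case, and via their approximate analogues from dimension-drop subalgebras (cf.\ Remark~\ref{rmk:Zpq}) in the $\CZ$-type case --- when $\CD$ is finite. A final reindexation argument then transports the resulting $u,v$ from $A_{\infty,\alpha}$ back to honest approximately $K$-$\eps$-invariant unitaries in $\CU(A)$, completing the proof.
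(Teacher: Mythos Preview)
This proposition is not proved in the present paper; it is quoted verbatim from \cite[2.6]{Szabo16ssa2}, so there is no proof here to compare against. That said, your attempt can still be assessed on its own merits.

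Your Steps 1 and 2 are correct, and the commutator $c(X,Y,Z;w)=a^*bab^*$ is a genuinely nice observation. The problem is Step 3. The sketch you give there is not a proof: the appeal to a ``de~la~Harpe--Skandalis determinant'' is vague (the determinant is only defined in the tracial case, and its relevance to the purely infinite case is unclear); the ``classical fact'' that determinant-trivial unitaries in strongly self-absorbing \cstar-algebras are single commutators is neither stated precisely nor proved; the case split into purely infinite, UHF-type and $\CZ$-type presupposes a classification of strongly self-absorbing \cstar-algebras that is only available under the UCT, which is not assumed here; and finally, each of these steps would have to be carried out with control on the approximate $G$-invariance of the constituent unitaries, which you do not address. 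As written, Step 3 is a gap.

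More to the point, Step 3 is unnecessary. Once you know (as you do, by the usual half-flip argument through a single central copy $\iota$) that there is an approximately $G$-invariant unitary $z\in A_{\infty,\alpha}$ with $\ad(z)\circ\phi_1\approx\phi_2$, you can upgrade $z$ to a commutator in one stroke. By reindexation choose a fresh equivariant unital $\iota'\colon(\CD,\gamma)\to(A_{\infty,\alpha}\cap C^*(A,z)',\alpha_\infty)$, and set $u=(\phi_1\cdot\iota')(w)$ for an approximately invariant half-flip unitary $w$. Since $z$ commutes with $\iota'(\CD)$ and $\ad(z)\circ\phi_1\approx\phi_2$, one has $(\phi_2\cdot\iota')(w)\approx zuz^*$; combining this with $\ad\big((\phi_2\cdot\iota')(w)^*\,u\big)\circ\phi_1\approx\phi_2$ (the usual two-step half-flip computation $\phi_1\to\iota'\to\phi_2$) gives
\[
\ad\big(z\,u^*\,z^*\,u\big)\circ\phi_1 \ \approx \ \phi_2,
\]
and $zu^*z^*u=(z)(u^*)(z)^{-1}(u^*)^{-1}$ is visibly a single commutator of the two approximately invariant unitaries $z$ and $u^*$. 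Lifting back to $A$ finishes the argument. This bypasses your product-of-two-commutators detour entirely and needs no structure theory of $\CD$.
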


\begin{prop}[\see{Szabo16ssa2}{1.19}] \label{prop:Z-stable-commutators}
Let $G$ be a second-countable, locally compact group. Let $A$ be a unital \cstar-algebra and $\alpha: G\curvearrowright A$ an action. Assume $\alpha\cc\alpha\otimes\id_\CZ$.
Then $\alpha$ is unitarily regular. 
Moreover, for every separable, $\alpha_\infty$-invariant \cstar-subalgebra $B\subset A_{\infty,\alpha}$, the fixed-point algebra of the relative commutant $(A_{\infty,\alpha}\cap B')^{\alpha_\infty}$ is $K_1$-injective.
\end{prop}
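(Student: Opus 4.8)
The plan is to derive both assertions from the hypothesis of equivariant $\CZ$-absorption by manufacturing, inside $A$, approximately central and approximately $\alpha$-fixed copies of $\CZ$ and then running a quantitative Whitehead-type homotopy argument inside them; the two structural inputs I would use about $\CZ$ are that $\CU_0(\CZ)=\CU(\CZ)$ and that $\CZ$ has a strongly asymptotically inner half-flip. Concretely, since $\id_\CZ$ is a strongly (hence semi-strongly) self-absorbing action, Theorem~\ref{equi-McDuff} with $\gamma=\id_\CZ$ turns $\alpha\cc\alpha\otimes\id_\CZ$ into a unital equivariant $*$-homomorphism $(\CZ,\id)\to(F_{\infty,\alpha}(A),\tilde\alpha_\infty)$, whose range contains arbitrarily many mutually commuting copies because $\CZ\cong\CZ^{\otimes\infty}$; by Remark~\ref{rmk:cont-reps} it is strongly locally semi-split, so it lifts to sequences of c.p.c.\ order zero maps $\sigma\colon\CZ\to A$ that are, on prescribed finite subsets $F\fin A$, $F_\CZ\fin\CZ$, compact $K\subseteq G$ and tolerance, arbitrarily multiplicative, unital, central for $F$, and $\alpha$-equivariant for the trivial $\CZ$-action, and which (since $A$ is unital and $\sigma(\eins_\CZ)\approx\eins_A$) can be renormalized to be genuinely unital after a small perturbation. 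The half-flip statement I would obtain by applying Theorem~\ref{thm:saGi-hf} to $\id_\CZ$ over the trivial group.

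For unitary regularity, fix $\eps>0$ and compact $K\subseteq G$; I must find $\delta>0$ so that $uvu^*v^*\in\CU_0(A^\alpha_{\eps,K})$ whenever $u,v\in\CU(A^\alpha_{\delta,K})$. Choose via the above a renormalized unital $\sigma\colon\CZ\to A$ that is $\eps'$-multiplicative on a large part of $\CZ$, $\eps'$-central for $\{u,v\}$, and sends $\CU(\CZ)$ into $A^\alpha_{\eps',K}$; then $C^*(\sigma(\CZ),u,v)$ is within $\eps'$ of a copy of $\CZ\otimes C^*(u,v)$ in $A$. Inside $\CZ\otimes C^*(u,v)$ I would run the classical argument underlying R{\o}rdam's theorem that $\CZ$-stable \cstar-algebras are $K_1$-injective: absorbing $\CZ\cong\CZ\otimes\CZ$ and using the strongly asymptotically inner half-flip together with $\CU_0(\CZ)=\CU(\CZ)$, one writes $uvu^*v^*$, up to $\eps'$, as a commutator $[\tilde u,\tilde v]$ inside an $M_2$ amplified by $\CZ$, and then connects $[\tilde u,\tilde v]$ to $\eins$ by the usual rotation homotopy. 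Every unitary along the resulting path is a word in $u$, $v$ and elements of $\sigma(\CZ)$, hence lies in $A^\alpha_{C(\delta+\eps'),K}$ for a universal constant $C$; choosing first $\eps'$ and then $\delta$ with $C(\delta+\eps')\le\eps$ concludes, after one last correcting path of unitaries close to $\eins$ that absorbs the $\eps'$-discrepancy between $C^*(\sigma(\CZ),u,v)$ and $\CZ\otimes C^*(u,v)$.

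For $K_1$-injectivity of $D:=(A_{\infty,\alpha}\cap B')^{\alpha_\infty}$, take $u\in\CU(D)$ with $[u]=0$ in $K_1(D)$. Since $B$ is separable and $\alpha_\infty$-invariant, a reindexation argument based on $\alpha\cc\alpha\otimes\id_\CZ$ and the $G$-$\sigma$-ideal property of $\ann(A,A_{\infty,\alpha})$ (Remark~\ref{rmk:cont-reps}) yields a unital $*$-homomorphism from $(\CZ,\id)$ into $A_{\infty,\alpha}\cap B'$ whose range commutes with $B$ and with a lift of $u$, sits in the fixed-point part, and is approximately central; in other words $(A_{\infty,\alpha}\cap B',\alpha_\infty)$ equivariantly absorbs $(\CZ,\id)$, so $D$ is $\CZ$-stable in the central-sequence sense. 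Running the same Whitehead/rotation argument as before — now using $K_0(\CZ)=\IZ[\eins]$ to divide the amplification $u\oplus\eins_k$, which is null-homotopic because $[u]=0$, back down to size one exactly as in R{\o}rdam's proof — gives $u\in\CU_0(D)$; since $D$ is typically non-separable I would carry out this last step through approximately central copies of $\CZ$ in $D$ rather than a literal decomposition $D\cong D\otimes\CZ$.

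The hard part will be the uniform bookkeeping in the unitary-regularity step: producing one homotopy that simultaneously exhibits the commutator as Whitehead-trivial after amplification and controls the de-amplification from ``$M_2$ amplified by $\CZ$'' back into $A$, all while keeping every intermediate unitary inside $\CU(A^\alpha_{\eps,K})$. Making this amplification/de-amplification legitimate is precisely what forces working with the order-zero lifts of the central $\CZ$ and renormalizing the defect $\sigma(\eins_\CZ)\approx\eins_A$, and it is here — not merely in $K_1(\CZ)=0$ — that $\CU_0(\CZ)=\CU(\CZ)$ and the strongly asymptotically inner half-flip of $\CZ$ are genuinely needed.
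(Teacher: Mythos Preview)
This proposition is recalled verbatim from \cite[1.19]{Szabo16ssa2} and is not proved in the present paper, so there is no proof here to compare your attempt against.

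On its own merits, your strategy is the standard and correct one: manufacture approximately central, approximately $\alpha$-fixed unital copies of $\CZ$ via Theorem~\ref{equi-McDuff} and Remark~\ref{rmk:cont-reps}, and then run the Jiang/R{\o}rdam $K_1$-injectivity argument inside them, noting that everything built from the $\CZ$-copy is automatically approximately $\alpha$-fixed since the action on $\CZ$ is trivial. Two small comments. First, your concern that $\sigma$ depends on $\{u,v\}$ is not a problem: only $\delta$ must be uniform in $u,v$, while the homotopy witnessing $uvu^*v^*\in\CU_0(A^\alpha_{\eps,K})$ may depend on the pair. Second, the bookkeeping you flag in your last paragraph --- amplifying into $M_2$ and then de-amplifying via $\CZ$ while controlling the $\alpha$-defect --- is real but manageable; the route actually taken in \cite{Szabo16ssa2} sidesteps some of it by working with approximately central, approximately fixed unital copies of dimension-drop intervals $Z_{n,n+1}$ rather than literal $M_2$-amplifications, which keeps the entire homotopy inside $A$ from the start. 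For the second assertion your reindexation sketch is right: one shows that for every separable $S\subset D=(A_{\infty,\alpha}\cap B')^{\alpha_\infty}$ there is a unital copy of $\CZ$ inside $D\cap S'$, and then the non-equivariant $K_1$-injectivity of $\CZ$-stable algebras finishes the job.
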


\begin{theorem}[\see{Szabo16ssa2}{4.9}] \label{thm:extensions}
Let $G$ be a second-countable, locally compact group. Let $\gamma: G\curvearrowright\CD$ be a semi-strongly self-absorbing action. If $\gamma$ is unitarily regular, then the class of all separable, $\gamma$-absorbing $G$-\cstar-dynamical systems is closed under equivariant extensions.
\end{theorem}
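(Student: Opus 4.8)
The plan is to reduce everything, via the equivariant McDuff-type theorem \ref{equi-McDuff}, to a statement about central sequence algebras, and then to carry out the equivariant analogue of the classical central-sequence argument from \cite[Section 4]{TomsWinter07} and \cite[Section 4]{Kirchberg04}, with unitary regularity playing the role that $K_1$-injectivity plays in the non-equivariant setting. So suppose we are given an equivariant extension $0\to (J,\alpha^J)\to (A,\alpha)\stackrel{\pi}{\longrightarrow}(B,\beta)\to 0$ of separable $G$-\cstar-dynamical systems in which both $(J,\alpha^J)$ and $(B,\beta)$ are $\gamma$-absorbing; by \ref{equi-McDuff} it suffices to produce a unital and equivariant $*$-homomorphism from $(\CD,\gamma)$ to $\big(F_{\infty,\alpha}(A),\tilde\alpha_\infty\big)$. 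Using the Packer--Raeburn trick exactly as in the proof of \ref{lemma:F(A)-subgroup}, we may assume that $\alpha$ and $\beta$ are genuine actions. Unitary regularity of $\gamma$ will enter only through Theorem \ref{thm:saGi-hf}, which equips $\gamma$ with a \emph{strongly asymptotically $G$-inner} half-flip: the flip automorphism of $(\CD\otimes\CD,\gamma\otimes\gamma)$ is approximately implemented by unitaries lying in the path-component $\CU_0$ of the near-fixed-point unitaries.

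Next I would set up the comparison between the central sequence algebras of the three terms. Writing $J_\infty\lhd A_{\infty,\alpha}$ for the closed, $\alpha$-invariant ideal of $\alpha$-continuous sequences with entries in $J$, one has $A_{\infty,\alpha}/J_\infty\cong B_{\infty,\beta}$, and invoking the equivariant $\sigma$-ideal technology of \cite[Section 3]{Szabo16ssa2} --- recall from Remark \ref{rmk:cont-reps} that $\ann(A,A_{\infty,\alpha})$ is a $G$-$\sigma$-ideal and that the quotient onto $F_{\infty,\alpha}(A)$ is strongly locally semi-split --- one obtains a surjective equivariant $*$-homomorphism
\[
q\colon \big(F_{\infty,\alpha}(A),\tilde\alpha_\infty\big)\longrightarrow \big(F_{\infty,\beta}(B),\tilde\beta_\infty\big)
\]
induced by $\pi$, whose kernel is a full ideal that is naturally and compatibly (under reindexation) related to the relative central sequence data of $(J,\alpha^J)$ sitting inside $A_{\infty,\alpha}$. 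Now $\gamma$-absorption of $B$ furnishes, by \ref{equi-McDuff}, a unital equivariant $*$-homomorphism $\rho\colon (\CD,\gamma)\to \big(F_{\infty,\beta}(B),\tilde\beta_\infty\big)$, while $\gamma$-absorption of $J$, together with the naturality of the relative central sequence construction, furnishes a unital equivariant $*$-homomorphism $\iota$ from $(\CD,\gamma)$ into the multiplier algebra $\CM(\ker q)$ whose range commutes with $\ker q$.

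The remaining step is to lift $\rho$ through $q$ and straighten the lift. Since the quotient $A_{\infty,\alpha}\cap A'\to F_{\infty,\alpha}(A)$ is strongly locally semi-split and $q$ is induced by $\pi$, a reindexation argument produces an equivariant c.p.c.\ order zero map $\sigma\colon (\CD,\gamma)\to \big(F_{\infty,\alpha}(A),\tilde\alpha_\infty\big)$ with $q\circ\sigma=\rho$. Writing $h:=\sigma(\eins)$, the identity $q\circ\sigma=\rho$ forces $h-h^2\in\ker q$, so $\sigma$ is multiplicative modulo $\ker q$. Combining $\sigma$ with $\iota$ --- whose ranges commute --- yields an equivariant map out of $(\CD\otimes\CD,\gamma\otimes\gamma)$, and conjugating by a path of unitaries in the appropriate $\CU_0$ that approximately implements the half-flip of $\gamma$ (Theorem \ref{thm:saGi-hf}) lets one absorb the order zero defect $h-h^2\in\ker q$ into the $\iota$-copy; passing to the limit produces the desired unital and equivariant $*$-homomorphism $(\CD,\gamma)\to \big(F_{\infty,\alpha}(A),\tilde\alpha_\infty\big)$. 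By \ref{equi-McDuff}, $(A,\alpha)$ is $\gamma$-absorbing, which completes the proof.

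I expect the last step --- the equivariant straightening of $\sigma$ with the help of $\iota$ and the half-flip --- to be the main obstacle. It is the exact point at which the classical arguments of \cite[Section 4]{TomsWinter07} and \cite[Section 4]{Kirchberg04} use $K_1$-injectivity of $\CD$ to pass between two unital copies of $\CD$ via unitaries close to the identity; in the equivariant setting this is replaced by unitary regularity, which via Theorem \ref{thm:saGi-hf} supplies the implementing unitaries inside the path-component $\CU_0$ of the near-fixed-point unitaries, so that the intertwining can be performed without spoiling asymptotic $G$-invariance. Some care is also needed in the bookkeeping that makes $\iota$ genuinely commute with $\sigma(\CD)$ and that identifies $\ker q$ with the relative central sequence data of $J$; this should be routine given the reindexation results of \cite[Section 3]{Szabo16ssa2} but must be spelled out.
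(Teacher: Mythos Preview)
This theorem is not proved in the present paper: it is quoted verbatim from \cite[4.9]{Szabo16ssa2}, as the header ``see \cite[4.9]{Szabo16ssa2}'' indicates. There is therefore no proof here to compare your proposal against.

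That said, your outline is essentially the strategy carried out in \cite[Section 4]{Szabo16ssa2}, which in turn is the equivariant adaptation of the Toms--Winter/Kirchberg extension argument. A few remarks on accuracy. Your identification of the quotient map $q$ and the use of the $G$-$\sigma$-ideal machinery to get equivariant surjectivity and strong local semi-splitness is right. Your claim that unitary regularity enters ``only through Theorem~\ref{thm:saGi-hf}'' is slightly misleading: in the actual argument, unitary regularity is used to guarantee that the induced action on the relevant relative-commutant sequence algebras is itself unitarily regular (and hence $K_1$-injective in the fixed-point sense), so that the two copies of $\CD$ coming from $\iota$ and from the lift of $\rho$ can be intertwined \emph{within the fixed-point algebra of the commutant}. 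The half-flip alone does not suffice; one needs the basic homotopy lemma (your paper's Lemma~\ref{lemma:basic-homotopy}) and the $K_1$-injectivity statement of Proposition~\ref{prop:Z-stable-commutators}-type to move between the two copies without leaving the near-fixed-point unitaries. Your final ``straightening'' paragraph is the correct place to worry, and your description there --- conjugating by a half-flip path to ``absorb the order-zero defect into the $\iota$-copy'' --- is too schematic to constitute a proof; the actual mechanism is to form $\sigma(d)+(\eins-h)^{1/2}\iota(d)(\eins-h)^{1/2}$ (or a functional-calculus variant), verify it is a unital equivariant order-zero map, and then use unitary regularity on the target to upgrade it to a $*$-homomorphism via a standard two-copies argument.
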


Recall the following technical property of semi-strongly self-absorbing actions, which arises as a consequence from a basic homotopy Lemma proved in \cite{Szabo16ssa2}.
See also \cite[Section 4]{GardellaLupini16}, where compelling Model theoretic evidence is given for the fact that dynamical systems induced on relative commutants like below are virtually indistinguishable from the surrounding system. 

\begin{lemma}[\see{Szabo16ssa2}{2.14}] \label{lemma:basic-homotopy}
Let $G$ be a second-countable, locally compact group. Let $\CD$ be a separable, unital \cstar-algebra and $\gamma: G\curvearrowright\CD$ a semi-strongly self-absorbing action. Let $A$ be a unital \cstar-algebra and $\alpha: G\curvearrowright A$ an action with $\alpha\scc\alpha\otimes\gamma$. Let $\psi: (\CD,\gamma)\to (A_{\infty,\alpha}, \alpha_\infty)$ be a unital and equivariant $*$-homomorphism. Then
\[
\CU_0\Big( \big( A_{\infty,\alpha}\cap\psi(\CD)' \big)^{\alpha_\infty} \Big) = \CU_0\big( (A_{\infty,\alpha})^{\alpha_\infty} \big) \cap \psi(\CD)'.
\]
In other words, a unitary in the fixed-point algebra $\big( A_{\infty,\alpha}\cap\psi(\CD)' \big)^{\alpha_\infty}$ is homotopic to $\eins$ precisely when it is homotopic to $\eins$ inside the larger fixed-point algebra $(A_{\infty,\alpha})^{\alpha_\infty}$.
\end{lemma}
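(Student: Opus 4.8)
The inclusion ``$\subseteq$'' is immediate: a continuous path of unitaries in $\big(A_{\infty,\alpha}\cap\psi(\CD)'\big)^{\alpha_\infty}$ running from $\eins$ to a unitary $u$ is in particular such a path in $(A_{\infty,\alpha})^{\alpha_\infty}$, each of whose values commutes with $\psi(\CD)$; hence $u\in\CU_0\big((A_{\infty,\alpha})^{\alpha_\infty}\big)\cap\psi(\CD)'$. For the reverse inclusion we fix a unitary $u\in\CU_0\big((A_{\infty,\alpha})^{\alpha_\infty}\big)\cap\psi(\CD)'$ together with a norm-continuous path of unitaries $(u_t)_{t\in[0,1]}$ in $\big(A_{\infty,\alpha}\big)^{\alpha_\infty}$ with $u_0=\eins$ and $u_1=u$, and our goal is to move this path inside $\big(A_{\infty,\alpha}\cap\psi(\CD)'\big)^{\alpha_\infty}$ without altering its endpoints.

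The plan is to absorb $\psi(\CD)$ using $\alpha\scc\alpha\otimes\gamma$ together with the approximately $G$-inner half-flip of $\gamma$ (Theorem \ref{thm:sssa}). Since $[0,1]$ is compact and $t\mapsto u_t$ is norm-continuous, the \cstar-subalgebra $S=\cstar\big(A\cup\psi(\CD)\cup\set{u_t\mid t\in[0,1]}\big)\subseteq A_{\infty,\alpha}$ is separable and $\alpha_\infty$-invariant. A reindexation argument over the sequence index, based on $\alpha\scc\alpha\otimes\gamma$ (cf.\ \cite{Szabo16ssa2}; note that no separability of $A$ is needed here), produces a unital equivariant $*$-homomorphism $\theta\colon(\CD,\gamma)\to(A_{\infty,\alpha}\cap S',\alpha_\infty)$. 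Because $\psi(\CD)\subseteq S$, the ranges of $\psi$ and $\theta$ commute, and since $\CD$ is nuclear this gives a unital equivariant $*$-homomorphism $\Psi\colon(\CD\otimes\CD,\gamma\otimes\gamma)\to(A_{\infty,\alpha},\alpha_\infty)$ with $\Psi(d\otimes\eins_\CD)=\psi(d)$ and $\Psi(\eins_\CD\otimes d)=\theta(d)$; moreover $\theta(\CD)$ commutes with $u_t$ for every $t\in[0,1]$.

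Next, composing $\Psi$ with approximately $G$-inner implementations of the half-flip on $(\CD\otimes\CD,\gamma\otimes\gamma)$ shows that $\psi\ue{G}\theta$ as $*$-homomorphisms into $(A_{\infty,\alpha},\alpha_\infty)$; crucially, the implementing unitaries can be taken inside $\cstar\big(\psi(\CD)\cup\theta(\CD)\big)$, and hence commuting with $u$, which lies in $\psi(\CD)'\cap\theta(\CD)'$. A Kirchberg-type reindexation then produces a single unitary $W\in\CU\big((A_{\infty,\alpha})^{\alpha_\infty}\big)$ with $W\psi(d)W^*=\theta(d)$ for all $d\in\CD$ and $Wu=uW$. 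Now consider the path $t\mapsto W^*u_tW$: it is norm-continuous and takes values in $\CU\big((A_{\infty,\alpha})^{\alpha_\infty}\big)$; it starts at $\eins$ and, since $W$ commutes with $u$, it ends at $W^*uW=u$; and since each $u_t$ commutes with $\theta(\CD)$ while $W^*\theta(\CD)W=\psi(\CD)$, each $W^*u_tW$ commutes with $\psi(\CD)$. Hence this path witnesses $u\in\CU_0\big((A_{\infty,\alpha}\cap\psi(\CD)')^{\alpha_\infty}\big)$, which completes the argument.

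The subtle point is the observation that makes the argument go through even though the half-flip of $\gamma$ is in general only approximately --- and not ``path-approximately'' --- $G$-inner: the given path $(u_t)_t$ need only be conjugated by the \emph{single} unitary $W$, rather than rotated along a path of unitaries implementing the half-flip. This is possible because $(u_t)_t$ already commutes with the auxiliary copy $\theta(\CD)$ (forced by the choice of $S$), and because $W$ can be arranged to commute with the endpoint $u$ (forced by realizing the half-flip inside $\cstar(\psi(\CD)\cup\theta(\CD))$, which centralizes $u$). The two reindexation steps --- producing $\theta$ with the stated commutation properties, and producing $W$ that implements the half-flip exactly while commuting with $u$ --- are the only nontrivial ingredients; in \cite{Szabo16ssa2} these are instead packaged into a basic homotopy lemma.
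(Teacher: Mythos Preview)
Your argument is correct and gives a self-contained alternative to the route taken in \cite{Szabo16ssa2}. Note that the present paper does not actually prove this lemma; it merely quotes \cite[2.14]{Szabo16ssa2} and remarks that the result is deduced there from a ``basic homotopy lemma''. That lemma (roughly) allows one to perturb a homotopy of unitaries that only approximately lies in a given subalgebra into one that lies there exactly, under suitable absorption hypotheses. Your approach bypasses this: rather than perturbing the given path $(u_t)$ into $\psi(\CD)'$, you manufacture a second equivariant copy $\theta$ of $\CD$ that the whole path already commutes with, and then conjugate by a single fixed unitary $W$ implementing the half-flip between $\psi$ and $\theta$. The trade-off is that your argument leans on two reindexation steps in the sequence algebra, whereas the original proof isolates a reusable perturbation statement. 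Both are valid; yours is arguably more direct for this particular conclusion, while the basic homotopy lemma of \cite{Szabo16ssa2} is formulated so as to apply in other situations as well.

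One small remark: the second reindexation --- producing $W$ in the \emph{exact} fixed-point algebra $(A_{\infty,\alpha})^{\alpha_\infty}$ rather than merely approximately fixed --- does require that the representing sequences $(V_{n,k})_k$ for $V_n=\Psi(v_n)$ be chosen in $\ell^\infty_\alpha(\IN,A)$, so that the $\eps$-test can be run against conditions of the form $\limsup_k\sup_{g\in K}\|\alpha_g(\,\cdot\,)-\,\cdot\,\|$ rather than merely pointwise in $g$. This is available (cf.\ Remark~\ref{rmk:cont-reps} and \cite[2.1]{Brown00}) and is implicit in your appeal to \cite{Szabo16ssa2}, but it is the one genuinely delicate point and worth making explicit.
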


The following is the main result of this section:

\begin{theorem} \label{thm:reduction-Z}
Let $G$ be a second-countable, locally compact group. Let $\CD$ be a separable, unital \cstar-algebra and $\gamma: G\curvearrowright\CD$ an action. Let $A$ be a separable \cstar-algebra and $(\alpha,u): G\curvearrowright A$ a cocycle action. Let $p$ and $q$ be two mutually coprime supernatural numbers of infinite type. 
\begin{enumerate}[label=\textup{(\roman*)},leftmargin=*] 
\item The action $\gamma\otimes\id_\CZ$ is semi-strongly self-absorbing if and only if $\gamma\otimes\id_{\IU}$ is semi-strongly self-absorbing for $\IU\in\set{M_p, M_q}$. \label{reduction-Z:1}
\item Suppose that $\gamma$ is semi-strongly self-absorbing. 
Then one has $(\alpha\otimes\id_\CZ,u\otimes\eins_\CZ)\cc (\alpha\otimes\gamma\otimes\id_\CZ,u\otimes\eins_\CD\otimes\eins_\CZ)$ if and only if one has $(\alpha\otimes\id_\IU,u\otimes\eins_\IU)\cc (\alpha\otimes\gamma\otimes\id_\IU,u\otimes\eins_\CD\otimes\eins_\IU)$ for $\IU\in\set{M_p, M_q}$. \label{reduction-Z:2}
\end{enumerate}
\end{theorem}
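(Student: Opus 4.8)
\emph{Overall strategy.} The plan is to prove (ii) by combining the reduction philosophy of Section~4 with the extension Theorem~\ref{thm:extensions} applied to the prime dimension-drop algebra $Z_{p,q}$, and then to bootstrap (i) out of (ii) together with Theorem~\ref{thm:sssa}. First I would reduce everything to genuine actions exactly as in the proof of Lemma~\ref{lemma:F(A)-subgroup} (stabilize with $\CK$ and apply the Packer--Raeburn trick \cite{PackerRaeburn89}), and record the bookkeeping used throughout: for a UHF algebra $\IU$ of infinite type one has $\CZ\otimes\IU\cong\IU$; being $\gamma$-absorbing (for a fixed semi-strongly self-absorbing $\gamma$) is preserved under tensoring with $\id_C$ for any $C$ — since $\beta\scc\beta\otimes\gamma$ forces $\beta\otimes\id_C\scc(\beta\otimes\id_C)\otimes\gamma$ after rearranging tensor factors — hence under $C_0((0,1),-)$, finite direct sums, and (stationary) inductive limits, by the permanence properties of \cite{Szabo16ssa2}; being semi-strongly self-absorbing is preserved under tensoring with $\id_C$ for $C$ strongly self-absorbing (via Theorem~\ref{thm:sssa}(iii)) and under cocycle conjugacy (via Theorem~\ref{thm:sssa}(ii), the central-sequence dynamical system being a cocycle conjugacy invariant); and, for a $\CZ$-stable action $\beta$, being $\gamma$-absorbing coincides with being $(\gamma\otimes\id_\CZ)$-absorbing. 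Together with $\CZ\otimes M_p\cong M_p$ and $\CZ\otimes M_q\cong M_q$, these give the forward implications in both (i) and (ii) at once.

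\emph{Proof of (ii), backward implication.} Write $\tilde\alpha=\alpha\otimes\id_\CZ$ on $\tilde A=A\otimes\CZ$; since $\CZ\otimes\CZ\cong\CZ$, the action $\tilde\alpha$ is equivariantly $\CZ$-stable, hence unitarily regular by Proposition~\ref{prop:Z-stable-commutators}, and I may replace $\gamma$ by $\gamma'=\gamma\otimes\id_\CZ$, which is semi-strongly self-absorbing and unitarily regular. Realizing $\CZ\cong\varinjlim\{Z_{p,q},\phi\}$ as in Remark~\ref{rmk:Zpq}, one has the equivariant short exact sequence
\[
0\to C_0\big((0,1)\big)\otimes \tilde A\otimes M_p\otimes M_q \to \tilde A\otimes Z_{p,q} \to (\tilde A\otimes M_p)\oplus(\tilde A\otimes M_q)\to 0 .
\]
Using $\CZ\otimes M_p\cong M_p$ and the hypothesis (together with $\CZ$-stability of $\tilde\alpha$), both quotient summands are $\gamma'$-absorbing, and so is $\tilde A\otimes M_p\otimes M_q=(\tilde A\otimes M_p)\otimes M_q$ in the ideal; by Theorem~\ref{thm:extensions} the extension $\tilde A\otimes Z_{p,q}$ is $\gamma'$-absorbing, hence $\gamma$-absorbing. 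A reindexation argument (or the inductive-limit permanence of \cite{Szabo16ssa2}) then shows that $\tilde A\cong\tilde A\otimes\CZ\cong\varinjlim\{\tilde A\otimes Z_{p,q},\id_{\tilde A}\otimes\phi\}$ is $\gamma$-absorbing, which, upon reinstating the $2$-cocycles, is the assertion.

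\emph{Proof of (i), backward implication.} By Theorem~\ref{thm:sssa}(iii) it suffices to show that $\gamma\otimes\id_\CZ$ has approximately $G$-inner half-flip and that $\gamma\otimes\id_\CZ\scc(\gamma\otimes\id_\CZ)^{\otimes\infty}$. For the half-flip: the actions $\gamma\otimes\id_{M_p}$, $\gamma\otimes\id_{M_q}$, $\gamma\otimes\id_{M_p\otimes M_q}$ are semi-strongly self-absorbing and $\CZ$-stable, hence unitarily regular, so by Theorem~\ref{thm:saGi-hf} their half-flips are strongly asymptotically $G$-inner; realizing $\CD\otimes\CZ\cong\varinjlim\{\CD\otimes Z_{p,q},\ldots\}$, one obtains the approximately $G$-inner half-flip of $\gamma\otimes\id_\CZ$ by splicing fibrewise over $[0,1]$ the endpoint half-flip implementations over $M_p$ and $M_q$ along a norm-continuous path of approximately $G$-invariant implementing unitaries in the interior fibres over $M_p\otimes M_q$ — the existence of such a path being guaranteed by unitary regularity and the $K_1$-injectivity of Proposition~\ref{prop:Z-stable-commutators} together with the homotopy Lemma~\ref{lemma:basic-homotopy} — and then passing to the limit. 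With this in hand, $(\gamma\otimes\id_\CZ)^{\otimes\infty}=\gamma^{\otimes\infty}\otimes\id_\CZ$ is strongly self-absorbing (by the equivariant analogue of R{\o}rdam's construction, \cite{Szabo16ssa}), hence semi-strongly self-absorbing, and I apply the already-proved part (ii) with $\alpha=\gamma$ and with $\gamma^{\otimes\infty}\otimes\id_\CZ$ in the role of the absorbing action: its two hypotheses become $\gamma\otimes\id_{M_p}\cc(\gamma\otimes\id_{M_p})^{\otimes\infty}$ and $\gamma\otimes\id_{M_q}\cc(\gamma\otimes\id_{M_q})^{\otimes\infty}$, which hold by Theorem~\ref{thm:sssa}(iii) since $\gamma\otimes\id_{M_p}$, $\gamma\otimes\id_{M_q}$ are semi-strongly self-absorbing. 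The conclusion of (ii) reads $\gamma\otimes\id_\CZ\cc\gamma^{\otimes\infty}\otimes\id_\CZ=(\gamma\otimes\id_\CZ)^{\otimes\infty}$, and by Theorem~\ref{equi-McDuff} this cocycle conjugacy upgrades to a strong cocycle conjugacy; Theorem~\ref{thm:sssa}(iii) then yields that $\gamma\otimes\id_\CZ$ is semi-strongly self-absorbing.

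\emph{Main obstacle.} The tensor-factor manipulations are routine; the one genuinely technical point is the fibrewise construction in part (i) of the approximately $G$-inner half-flip of $\gamma\otimes\id_\CZ$ over the dimension-drop interval. Although every fibre of $\CD\otimes Z_{p,q}$ carries a semi-strongly self-absorbing, $\CZ$-stable action with (strongly asymptotically) $G$-inner half-flip, the algebra $Z_{p,q}$ itself has no approximately inner half-flip, so the endpoint implementing unitaries over $M_p$ and $M_q$ must be connected through the interior in a norm-continuous, approximately $G$-invariant way; supplying this is exactly what unitary regularity and $K_1$-injectivity are for, and the argument runs parallel to the one by which Theorem~\ref{thm:extensions} was obtained in \cite{Szabo16ssa2}. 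Everything else — in particular the passage from $Z_{p,q}$ to $\CZ=\varinjlim\{Z_{p,q},\phi\}$ — is a reindexation.
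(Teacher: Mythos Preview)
Your overall architecture matches the paper's exactly: reduce to genuine actions via Packer--Raeburn, prove (ii) via the $Z_{p,q}$ extension and Theorem~\ref{thm:extensions} followed by an inductive-limit argument, then bootstrap (i) from (ii) after establishing the approximately $G$-inner half-flip. Part~(ii) and Step~2 of part~(i) are essentially identical to the paper's proof.

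The one place where your proposal is genuinely imprecise is Step~1 of~(i), and here the paper's mechanism differs from yours in an important way. You propose to splice together half-flip implementers for $\gamma\otimes\id_{M_p}$ and $\gamma\otimes\id_{M_q}$ obtained from Theorem~\ref{thm:saGi-hf}. But those unitaries implement $\sigma^{\CD\otimes\IU_i}$ on $(\CD\otimes\IU_i)^{\otimes 2}$, which acts \emph{nontrivially} on the UHF tensor factor; when you embed both into $B\otimes\IW$ (with $B=\CD\otimes\CD$, $\IW=M_p\otimes M_q$) and form their ratio, the result commutes with $B$ but not with $\IW$, so it does not lie in the relative commutant required for Lemma~\ref{lemma:basic-homotopy}, and hence the splicing does not produce an element of $B\otimes Z_{p,q}$. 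The paper avoids this by not aiming directly at the half-flip of $\gamma\otimes\id_\CZ$; instead it shows that $\sigma^\CD\otimes\id_\CZ$ is approximately $G$-inner on $B\otimes\CZ$, and only afterwards combines this with the (trivially equivariant) approximate innerness of $\sigma^\CZ$. The crucial tool for this is Proposition~\ref{prop:commutator-ue}, which you do not invoke: applied to $\phi_1=\id$ and $\phi_2=\sigma^\CD\otimes\id_{\IU_i}$ on $B\otimes\IU_i$, it produces implementers of $\sigma^\CD\otimes\id_{\IU_i}$ that are \emph{commutators} $u_iv_iu_i^*v_i^*$ and hence (a)~act trivially on $\IU_i$, so the ratio $z$ lands in $(B\otimes\IW)'$, and (b)~lie in $\CU_0$ of the fixed-point sequence algebra by unitary regularity, so Lemma~\ref{lemma:basic-homotopy} yields a homotopy of $z$ to $\eins$ \emph{inside} the commutant. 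That is what makes the path $w(t)$ define a unitary in $(B\otimes Z_{p,q})_\infty$. There is also a final perturbation step (using Proposition~\ref{prop:commutator-ue} once more, together with a reindexation) to ensure $w$ fixes not only $B$ but also the embedded copy of $\CZ$.

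Your approach can be repaired by first stripping off $\sigma^{\IU_i}$ (which is approximately inner) to reduce to implementing $\sigma^\CD\otimes\id$, but as written the ``splicing fibrewise'' step does not go through, and the role you assign to Theorem~\ref{thm:saGi-hf} is really played by Proposition~\ref{prop:commutator-ue}.
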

\begin{proof}
The implication ``$\Rightarrow$'' is clear in every statement because of $\IU\cong\IU\otimes\CZ$, so let us show the ``$\Leftarrow$'' implication everywhere. We shall start with \ref{reduction-Z:2} and use it to prove \ref{reduction-Z:1}.

\ref{reduction-Z:2}: We may assume without loss of generality that $(\alpha,u)\cc (\alpha\otimes\id_\CZ,u\otimes\eins)$ and $\gamma\cc\gamma\otimes\id_\CZ$. Note that by \ref{prop:Z-stable-commutators} and \ref{thm:extensions}, this implies that separable, $\gamma$-absorbing $G$-\cstar-dynamical systems are closed under equivariant extensions. 

Consider the Hilbert space $\CH=\ell^2(\IN)\bar{\otimes} L^2(G)$ and let $\delta: G\curvearrowright\CK(\CH)$ be the unitarily implemented action that is induced by the left-regular representation of $G$ on $L^2(G)$. 
By the Packer-Raeburn stabilization trick from \cite[3.4]{PackerRaeburn89}, the 2-cocycle $u\otimes\eins$ with respect to $\alpha\otimes\delta$ is a coboundary, and thus $(\alpha\otimes\delta,u\otimes\eins)$ is exterior equivalent to a genuine action.
By \cite[4.30]{BarlakSzabo16ss}, the property of $\gamma$-absorption is invariant under equivariant Morita equivalence. In particular, we may replace $(\alpha,u)$ by a genuine action on $A\otimes\CK(\CH)$, or alternatively just assume that $u=\eins$.

Denote $I=\CC_0(0,1)\otimes M_p\otimes M_q$ and $Q=M_p\oplus M_q$. From the canonical extension of \cstar-algebras
\[
\xymatrix{
0 \ar[r]& I \ar[r]& Z_{p,q} \ar[r]& Q \ar[r]& 0,
}
\]
we get the equivariant extension
\[
\xymatrix@C-3mm{
0 \ar[r]& (A\otimes I,\alpha\otimes\id_I) \ar[r] &  (A\otimes Z_{p,q},\alpha\otimes\id_{Z_{p,q}}) \ar[r] &  (A\otimes Q, \alpha\otimes\id_Q) \ar[r] & 0.
}
\]
Since $\alpha\otimes\id_\IU\cc\alpha\otimes\gamma\otimes\id_\IU$ for $\IU\in\set{M_p, M_q}$ by assumption, it is clear that $\alpha\otimes\id_I\cc\alpha\otimes\gamma\otimes\id_I$ and $\alpha\otimes\id_Q\cc\alpha\otimes\gamma\otimes\id_Q$.
Hence also $\alpha\otimes\id_{Z_{p,q}}\cc\alpha\otimes\gamma\otimes\id_{Z_{p,q}}$ by virtue of this extension.
As the Jiang-Su algebra $\CZ$ arises as a stationary inductive limit of $Z_{p,q}$ (see \ref{rmk:Zpq}), we have
\[
(A\otimes\CZ,\alpha\otimes\id_\CZ) \cong \lim_{\longrightarrow}\ (A\otimes Z_{p,q}, \alpha\otimes\id_{Z_{p,q}} ).
\]
Since $\gamma$-absorption passes to equivariant inductive limits by \cite[1.10]{Szabo16ssa2}, this shows our claim.

\ref{reduction-Z:1}: Set $\IU_1=M_p$, $\IU_2=M_q$ and $\IW=\IU_1\otimes\IU_2$. Suppose that $\gamma\otimes\id_{\IU_i}$ is semi-strongly self-absorbing for $i=1,2$. We will need to go through two steps in order to prove that $\gamma\otimes\id_\CZ$ is semi-strongly self-absorbing.

{\bf Step 1:} The first and most difficult step is to show that $\gamma\otimes\id_\CZ$ has approximately $G$-inner flip.
For a unital \cstar-algebra $C$, denote by $\sigma^C\in\Aut(C\otimes C)$ the flip automorphism. 
Set $B=\CD\otimes\CD$, $\beta=\gamma\otimes\gamma$ and consider the $\beta$-equivariant automorphism $\sigma=\sigma^\CD\in\Aut(B,\beta)$ given by the flip. 
Then $\beta\otimes\id_{\IU_i}$ is semi-strongly self-absorbing for $i=1,2$. Hence by \ref{prop:commutator-ue}, we can find
\[
u_i, v_i \ \in \ \CU\Big( (B\otimes\IU_i)_{\infty,\beta\otimes\id}^{(\beta\otimes\id)_\infty} \Big),\quad i=1,2
\]
with $\ad(u_iv_iu_i^*v_i^*)(b\otimes c_i) = \sigma(b)\otimes c_i$ for $b\in B$ and $c_i\in\IU_i$.
We may naturally view $B\otimes\IU_i\subset B\otimes\IW$ for $i=1,2$, and thus define
\[
z = (u_1v_1u_1^*v_1^*)^*(u_2v_2u_2^*v_2^*) \ \in \ \CU\Big( \big( (B\otimes\IW)_{\infty,\beta\otimes\id}\cap (B\otimes\IW)' \big)^{(\beta\otimes\id)_\infty} \Big)
\]
By \ref{prop:Z-stable-commutators}, it follows that the unitary $z$ is homotopic to the unit inside $(B\otimes\IW)_{\infty,\beta\otimes\id}^{(\beta\otimes\id)_\infty}$. By the basic homotopy Lemma \ref{lemma:basic-homotopy}, we thus get that $z$ is homotopic to the unit by some unitary path (note the slight abuse of notation)
\[
z: [0,1]\to \CU\Big( \big( (B\otimes\IW)_{\infty,\beta\otimes\id}\cap (B\otimes\IW)' \big)^{(\beta\otimes\id)_\infty} \Big)
\]
with $z(0)=\eins$ and $z(1)=z$. Let us consider the unitary path
\[
w: [0,1] \to \CU\Big( (B\otimes\IW)_{\infty,\beta\otimes\id}^{(\beta\otimes\id)_\infty} \Big),\quad w(t)=(u_1v_1u_1^*v_1^*)z(t).
\]
We see that 
\[
w(0)=u_1v_1u_1^*v_1^* \ \in \ \CU\Big( (B\otimes\IU_1)_{\infty,\beta\otimes\id}^{(\beta\otimes\id)_\infty} \Big)
\]
and 
\[
w(1)=u_2v_2u_2^*v_2^* \ \in \ \CU\Big( (B\otimes\IU_2)_{\infty,\beta\otimes\id}^{(\beta\otimes\id)_\infty} \Big).
\]
Moreover, we have
\[
w(t)(b\otimes c_1\otimes c_2)w(t)^* = \sigma(b)\otimes c_1\otimes c_2\quad\text{for all } b\in B, c_i\in\IU_i.
\]
Thus we can view $w$ as a unitary
\[
w\in \CU\Big( (B\otimes Z_{p,q})_{\infty,\beta\otimes\id}^{(\beta\otimes\id)_\infty} \Big) \ \stackrel{\ref{rmk:Zpq}}{\subset} \ \CU\Big( (B\otimes\CZ)_{\infty,\beta\otimes\id}^{(\beta\otimes\id)_\infty} \Big)
\]
with the property $w(b\otimes\eins)w^*=\sigma(b)\otimes\eins$ for all $b\in B$. Note that the image of
\[
(\CZ,\id_\CZ) \to (B\otimes\CZ)_{\infty,\beta\otimes\id}^{(\beta\otimes\id)_\infty},\quad x\mapsto w(\eins\otimes x)w^*
\]
commutes with $B\otimes\eins$. 
Using the uniqueness result \ref{prop:commutator-ue} and a reindexation trick, this map is $G$-unitarily equivalent to the canonical map $x\mapsto\eins\otimes x$, where we view it as a map with codomain being the relative commutant of $B\otimes\eins$.
By perturbing $w$ with the resulting unitary in $(B\otimes\CZ)_{\infty,\beta\otimes\id}^{(\beta\otimes\id)_\infty}\cap (B\otimes\eins)'$, if necessary, we may thus assume that $w(b\otimes x)w^*=\sigma(b)\otimes x$ for all $b\in B$ and $x\in\CZ$. 

To summarize, all of this shows that 
\[
\sigma^\CD\otimes\id_\CZ\ue{G}\id_B\otimes\id_\CZ=\id_{\CD\otimes\CD}\otimes\id_\CZ.
\]
Combining this with the fact that $\CZ\cong\CZ\otimes\CZ$ has approximately inner flip, we see that
\[
\sigma^{\CD\otimes\CZ} = \sigma_{23}^{-1}\circ(\id_{\CD\otimes\CD}\otimes\sigma^\CZ)\circ(\sigma^\CD\otimes\id_{\CZ\otimes\CZ})\circ\sigma_{23} \ue{G} \id_{\CD\otimes\CZ\otimes\CD\otimes\CZ},
\]
where $\sigma_{23}$ denotes the isomorphism from $\CD\otimes\CZ\otimes\CD\otimes\CD$ to $\CD\otimes\CD\otimes\CZ\otimes\CZ$ flipping the second and third tensors.

{\bf Step 2:} Let us now show the claim. From the previous step, we know that $\gamma\otimes\id_\CZ$ has approximately $G$-inner flip. Thus the infinite tensor power action
\[
(\gamma\otimes\id_\CZ)^{\otimes\infty} : G\curvearrowright (\CD\otimes\CZ)^{\otimes\infty}
\]
is strongly self-absorbing by \cite[3.3]{Szabo16ssa}. As $\IU_i\cong\IU_i\otimes\CZ$ for $i=1,2$, our assumptions imply
\[
\begin{array}{cll}
\gamma\otimes\id_{\IU_i} &\scc& (\gamma\otimes\id_{\IU_i})^{\otimes\infty} \\
&\cong& (\gamma\otimes\id_{\IU_i})^{\otimes\infty}\otimes(\gamma\otimes\id_{\CZ})^{\otimes\infty} \\
&\scc& \gamma\otimes\id_{\IU_i}\otimes (\gamma\otimes\id_{\CZ})^{\otimes\infty}
\end{array}
\]
for $i=1,2$.
By part \ref{reduction-Z:2} applied to $\gamma$ in place of $\alpha$ and $(\gamma\otimes\id_\CZ)^{\otimes\infty}$ in place of $\gamma$, it follows that
\[
\gamma\otimes\id_{\CZ} \scc \gamma\otimes\id_{\CZ}\otimes (\gamma\otimes\id_\CZ)^{\otimes\infty} \cong (\gamma\otimes\id_\CZ)^{\otimes\infty}.
\]
Using \ref{thm:sssa} this shows that $\gamma\otimes\id_\CZ$ is semi-strongly self-absorbing. 
\end{proof}


\section{Application to actions on strongly self-absorbing \cstar-algebras}

In this section, we shall obtain our main application of the results from the previous sections. First, we need to recall some results from the literature.

\begin{reme}
An automorphism $\alpha\in\Aut(A)$ on a unital \cstar-algebra $A$ is called strongly outer, if it is outer and if for every $\alpha$-invariant tracial state $\tau\in T(A)$, the induced automorphism of $\alpha$ on the weak closure $\pi_\tau(A)''$ is outer.
(Unlike in other sources from the literature, we shall not assume $T(A)\neq\emptyset$ for this definition. If $T(A)=\emptyset$, then strongly outer just means outer by convention.)
If $G$ is a discrete group, then a cocycle action $(\alpha,u): G\curvearrowright A$ is called pointwise strongly outer, if $\alpha_g$ is a strongly outer automorphism for every $g\in G\setminus\set{1_G}$.
\end{reme}

The following is a combination of results proved in \cite{Matui08, Matui11, IzumiMatui10} due to Matui and Izumi-Matui.

\begin{theorem} \label{thm:previous-uniqueness}
Let $\CD$ be a strongly self-absorbing \cstar-algebra satisfying the UCT that is not isomorphic to the Jiang-Su algebra. Let $d\geq 1$ be a number. Then any two pointwise strongly outer $\IZ^d$-actions on $\CD$ are strongly cocycle conjugate. Moreover, any such action is semi-strongly self-absorbing.
\end{theorem}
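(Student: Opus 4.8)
The plan is to combine the classification of strongly self-absorbing \cstar-algebras with the existing classification theory for $\IZ^d$-actions, splitting into the stably finite and the purely infinite case. Among the known strongly self-absorbing \cstar-algebras satisfying the UCT, the ones not isomorphic to $\CZ$ are:\ the UHF algebras $M$ of infinite type, and the Kirchberg algebras $\CO_2$, $\CO_\infty$ and $\CO_\infty\otimes M$ with $M$ a UHF algebra of infinite type. It therefore suffices to verify the two assertions of the theorem in each of these cases.

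For $\CD=M$ a UHF algebra of infinite type, I would invoke the work of Matui \cite{Matui08, Matui11}:\ there is a model $\IZ^d$-action $\lambda_M$ on $M$ --- of product type, built from Rokhlin towers --- that is strongly self-absorbing as a \cstar-dynamical system, and every pointwise strongly outer $\IZ^d$-action on $M$ is (strongly) cocycle conjugate to $\lambda_M$. The first assertion of the theorem then follows by transitivity of $\scc$, and the second is immediate from the definition of semi-strong self-absorption as strong cocycle conjugacy to a strongly self-absorbing action.

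For $\CD\in\set{\CO_2,\CO_\infty,\CO_\infty\otimes M}$ there are no tracial states, so pointwise strong outerness reduces to pointwise outerness, and I would appeal to the classification of outer $\IZ^d$-actions on Kirchberg algebras due to Izumi--Matui \cite{IzumiMatui10}. That classification is up to strong cocycle conjugacy, but only modulo a $KK$-theoretic primary invariant together with a secondary cohomological obstruction; the point to be checked is that both are automatically trivial when $\CD$ is strongly self-absorbing. This is clear for $\CO_2$ since $KK(\CO_2,\CO_2)=0$; for $\CO_\infty$ and $\CO_\infty\otimes M$ one uses that $K_1$ vanishes and that the $KK$-class of any automorphism must fix the class of the unit in the torsion-free group $K_0$, which forces it to equal the identity and thereby annihilates both obstructions. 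Hence any two such actions are strongly cocycle conjugate, and --- exactly as in the UHF case, using that the corresponding Izumi--Matui model action is strongly self-absorbing --- every such action is semi-strongly self-absorbing.

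I expect the main difficulty to be organisational rather than conceptual:\ one has to match each of the four algebras to the precisely applicable statement in \cite{Matui08, Matui11, IzumiMatui10}, in particular making sure one obtains \emph{strong} cocycle conjugacy and not merely cocycle conjugacy --- which for the amenable group $\IZ^d$ can be arranged by a routine reindexation argument when it is not already stated --- and, in the purely infinite cases, carrying out the verification that the $K$-theoretic data of $\CD$ is rigid enough to leave no room for genuinely inequivalent actions. The case $d=1$ is subsumed, the relevant single-automorphism classifications being part of the same body of work.
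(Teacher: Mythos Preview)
Your approach is essentially the same as the paper's: both reduce to the case-by-case classification results of Matui \cite{Matui08, Matui11} and Izumi--Matui \cite{IzumiMatui10} for the finitely many strongly self-absorbing \cstar-algebras in question. Two small deviations are worth noting. First, the paper justifies the list of possible $\CD$ by citing \cite[6.7]{TikuisisWhiteWinter15}, rather than taking it as known; you should do the same, since the exhaustiveness of the list is a nontrivial theorem. Second, for the semi-strong self-absorption clause the paper does not argue via strong self-absorption of a model action as you do, but instead invokes \cite[5.9, 5.12]{Szabo16ssa} directly; this sidesteps the need to verify that the specific model actions in \cite{Matui11, IzumiMatui10} are themselves strongly self-absorbing as dynamical systems, which is not quite immediate from those papers. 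Your $KK$-theoretic verification in the Kirchberg case is more explicit than what the paper writes (it simply cites \cite[6.18, 6.20]{IzumiMatui10}), but it is correct and harmless.
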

\begin{proof}
Note that by \cite[6.7]{TikuisisWhiteWinter15}, $\CD$ must be isomorphic to either a UHF algebra of infinite type, one of the Cuntz algebras $\CO_2$ or $\CO_\infty$, or tensor products between these.
By applying either one of the classification results \cite[5.4]{Matui11} of Matui, \cite[5.2]{Matui08} of Matui or \cite[6.18, 6.20]{IzumiMatui10} of Izumi-Matui, it follows that any two pointwise strongly outer $\IZ^d$-actions on $\CD$ are strongly cocycle conjugate. 
It also follows from \cite[5.9, 5.12]{Szabo16ssa} that such actions are automatically semi-strongly self-absorbing. 
\end{proof}

Here comes the main result of the paper:

\begin{theorem} \label{thm:main-result}
Let $\CD$ be a strongly self-absorbing \cstar-algebra satisfying the UCT. Let $G$ be a countable, torsion-free abelian group. Then any two pointwise strongly outer $G$-actions on $\CD$ are very strongly cocycle conjugate. Moreover, any such action is semi-strongly self-absorbing.
\end{theorem}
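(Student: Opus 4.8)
The plan is to reduce the statement, via the structure theory developed in the previous sections, to the known uniqueness results for $\IZ^d$-actions and to the $\CZ$-stability theorem of Matui--Sato. First I would dispose of the easy structural reductions: since $G$ is countable, torsion-free and abelian, it is the increasing union of finitely generated subgroups $G_n\cong\IZ^{d_n}$; by Theorem \ref{thm:reduction-subgroups}, both the property of being semi-strongly self-absorbing and the property of absorbing a given semi-strongly self-absorbing action can be checked after restricting to each $G_n$. Thus if I can show that the two given actions $\alpha,\beta:G\curvearrowright\CD$ are very strongly cocycle conjugate, it will suffice to understand their restrictions well enough to feed them into Theorem \ref{thm:strong-absorption}; and to prove semi-strong self-absorption it will suffice to prove it on each $G_n$.

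Next I would handle the case split on $\CD$. By Matui--Sato \cite[4.11]{MatuiSato14}, any pointwise strongly outer $G$-action on $\CD$ is equivariantly $\CZ$-stable, i.e. $\alpha\cc\alpha\otimes\id_\CZ$; hence by Proposition \ref{prop:Z-stable-commutators} it is unitarily regular, and pointwise strong outerness passes to $\alpha\otimes\id_{M_p}$ for any supernatural $p$ of infinite type. When $\CD\not\cong\CZ$, write $\CD\cong\CD\otimes\CZ$ and, picking coprime supernatural numbers $p,q$ of infinite type with $M_p\otimes M_q\cong\IU$ a UHF algebra with $\CD\otimes\IU\cong\CD$ (possible since $\CD$ is a UCT strongly self-absorbing algebra, hence a tensor product of a UHF algebra of infinite type with $\CO_2$, $\CO_\infty$, or $\CO_2\otimes\CO_\infty$ by \cite[6.7]{TikuisisWhiteWinter15}), I can apply Theorem \ref{thm:previous-uniqueness} on each $G_n$: the restrictions $(\alpha\otimes\id_{M_p})|_{G_n}$ and $(\beta\otimes\id_{M_p})|_{G_n}$ are strongly cocycle conjugate and semi-strongly self-absorbing, likewise for $M_q$. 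Theorem \ref{thm:reduction-Z} then upgrades this to $\alpha\otimes\id_\CZ\cc\beta\otimes\id_\CZ$ and to semi-strong self-absorption of $\alpha\otimes\id_\CZ$ on each $G_n$, hence on $G$ by Theorem \ref{thm:reduction-subgroups}; since $\CD\cong\CD\otimes\CZ$ this gives $\alpha$ and $\beta$ semi-strongly self-absorbing and mutually absorbing. When $\CD\cong\CZ$, I instead invoke Matui--Sato \cite{MatuiSato12_2} for $\IZ^2$ and the general $\IZ^d$ case would need care --- but the $\CZ$-stability plus the reduction Theorem \ref{thm:reduction-Z}\ref{reduction-Z:1}, bootstrapping from the UHF-stabilizations $\CZ\otimes M_p\cong M_p$ on which Theorem \ref{thm:previous-uniqueness} applies, again yields that each restriction, hence each action, is semi-strongly self-absorbing, and Theorem \ref{thm:reduction-Z}\ref{reduction-Z:2} yields mutual absorption.

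Finally, to pass from ``semi-strongly self-absorbing and mutually absorbing'' to ``very strongly cocycle conjugate,'' I would argue as follows: each $\alpha$ is unitarily regular (being equivariantly $\CZ$-stable), semi-strongly self-absorbing, and $\alpha\cc\alpha\otimes\beta$; by the equivariant McDuff Theorem \ref{equi-McDuff} there is a unital equivariant $*$-homomorphism $(\CD,\beta)\to(F_{\infty,\alpha}(\CD),\tilde\alpha_\infty)$, and since $\beta$ is unitarily regular and semi-strongly self-absorbing, Theorem \ref{thm:strong-absorption} gives $\alpha\vscc\alpha\otimes\beta$; symmetrically $\beta\vscc\beta\otimes\alpha\cong\alpha\otimes\beta$, so $\alpha\vscc\beta$ since $\vscc$ is transitive. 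The main obstacle is the Jiang-Su case: one must make sure that the bootstrapping in Theorem \ref{thm:reduction-Z} genuinely produces semi-strong self-absorption of the $\IZ^d$-restrictions on $\CZ$ itself starting only from the UHF-stable input, and that the mutual-absorption statement needed to start Theorem \ref{thm:strong-absorption} is available --- this is exactly where the abstract reduction machinery of Section 5 is indispensable, as the $\IZ^d$-uniqueness on $\CZ$ is not known directly for $d\geq 3$.
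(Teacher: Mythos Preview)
Your proposal is correct and follows essentially the same route as the paper: reduce to the subgroups $G_n\cong\IZ^{d_n}$ via Theorem \ref{thm:reduction-subgroups}, pass to UHF-stabilizations where Theorem \ref{thm:previous-uniqueness} applies, return via Theorem \ref{thm:reduction-Z} together with Matui--Sato's equivariant $\CZ$-stability, and finish with Theorem \ref{thm:strong-absorption}. The paper treats all $\CD$ uniformly rather than splitting on whether $\CD\cong\CZ$ (since $\CD\otimes M_p$ is always strongly self-absorbing, satisfies the UCT, and is never $\CZ$), and your parenthetical assertion that one can arrange $\CD\otimes M_p\otimes M_q\cong\CD$ is false for $\CD=\CO_\infty$ but is never actually used in your argument.
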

\begin{proof}
Let $\gamma_1, \gamma_2: G\curvearrowright\CD$ be two pointwise strongly outer actions.  We may write $G=\bigcup_{n\in\IN} G_n$ for an increasing sequence of finitely generated subgroups.
As $G$ is torsion-free and abelian, this implies in particular that for every $n$, the group $G_n$ is isomorphic to $\IZ^{d_n}$ for some $d_n\in\IN$. Let $p$ and $q$ be two mutually coprime supernatural numbers of infinite type. Set $\IU_1=M_p$ and $\IU_2=M_q$.

Then $\CD\otimes\IU_j$ is a strongly self-absorbing \cstar-algebras satisfying the UCT that is not isomorphic to the Jiang-Su algebra for $j=1,2$. 
Thus \ref{thm:previous-uniqueness} applies and we see that for every $n$, the $G_n$-action $(\gamma_i\otimes\id_{\IU_j})|_{G_n}$ is a semi-strongly self-absorbing action for $i=1,2$ and $j=1,2$, with
\[
(\gamma_1\otimes\id_{\IU_j})|_{G_n} \scc (\gamma_2\otimes\id_{\IU_j})|_{G_n} \scc (\gamma_1\otimes\gamma_2\otimes\id_{\IU_j})|_{G_n} \ ,\quad j=1,2.
\]
Thus we can apply \ref{thm:reduction-Z} to deduce that for every $n$, the $G_n$-action $(\gamma_i\otimes\id_\CZ)|_{G_n}$ is semi-strongly self-absorbing for $i=1,2$, with
\[
(\gamma_1\otimes\id_\CZ)|_{G_n} \scc (\gamma_2\otimes\id_\CZ)|_{G_n} \scc (\gamma_1\otimes\gamma_2\otimes\id_\CZ)|_{G_n}.
\]
As $n$ was arbitrary, it follows from \ref{thm:reduction-subgroups} that the actions $\gamma_1\otimes\id_\CZ$ and $\gamma_2\otimes\id_\CZ$ are semi-strongly self-absorbing and are strongly cocycle conjugate. 
Now one has $\gamma_1\scc\gamma_1\otimes\id_\CZ$ and $\gamma_2\scc\gamma_2\otimes\id_\CZ$ due to a result \cite[4.11]{MatuiSato14} of Matui-Sato. 
So $\gamma_1$ and $\gamma_2$ are equivariantly $\CZ$-stable and in particular unitarily regular by \ref{prop:Z-stable-commutators}. 
Since they absorb each other tensorially, it follows from \ref{thm:strong-absorption} that in fact $\gamma_1\vscc\gamma_1\otimes\gamma_2\vscc\gamma_2$. This finishes the proof.
\end{proof}

\begin{rem}
The strategy of the proof of \ref{thm:main-result} in order to obtain uniqueness results for actions on the Jiang-Su algebra, making crucial use of \ref{thm:reduction-Z}, should have more applications in the future because it relies on a general principle not depending on the acting group.
Note that the results from Section 5 in particular allow one to bypass having to solve some hard problems related to the vanishing of general cocycles, which has been considered by Matui-Sato in \cite{MatuiSato12_2, MatuiSato14} to show uniqueness for $\IZ^2$-actions on $\CZ$, and to show uniqueness for actions of the Klein bottle group $\IZ\rtimes_{-1}\IZ$ on $\CZ$. 
What is however seemingly inaccessible with our approach at the moment is to determine under what conditions a cocycle action on a strongly self-absorbing \cstar-algebra is cocycle conjugate to a genuine action; this could also be successfully tackled in Matui-Sato's approach \cite{MatuiSato12_2, MatuiSato14}.
\end{rem}


\bibliographystyle{gabor}
\bibliography{master}

\end{document}